\newcommand{\junk}[1]{}
\newcommand{\dword}[1]{\textit{\bf #1}}
\newcommand{\ms}{\mathbf{cfms}}
\newcommand{\ded}{\ensuremath{\mathbf{d}}}
\newcommand{\zf}{\ensuremath{\mathbf{czf}}}
\newcommand{\mm}{\ensuremath{\mathbf{mu}}}
\newcommand{\degree}{\mathrm{deg}}
\newtheorem{defn}{Definition}[section]
\newtheorem{thm}[defn]{Theorem}
\newtheorem{cor}[defn]{Corollary}
\newtheorem{lemma}[defn]{Lemma}
\newtheorem{quest}[defn]{Question}
\title {Deduction, Constrained Zero Forcing, and Constrained Searching}
\author{Andrea Burgess\thanks{Department of Mathematics and Statistics, University of New Brunswick, Saint John, NB, E2L 4L5, Canada.  Supported by NSERC Discovery Grant RGPIN-2019-04328.}, Danny Dyer\thanks{Department of Mathematics and Statistics, Memorial University of Newfoundland, St.~John's, NL, A1C 5S7, Canada. Supported by NSERC Discovery Grant RGPIN-2021-03064.}, 
Kerry Ojakian\thanks{Department of Mathematics and Computer Science, Bronx Community College of The City University of New York (Bronx, NY, U.S.A)}, 
\\
Lusheng Wang\thanks{Department of Computer Science, City University of Hong Kong, Hong Kong Special Administrative Region, China. Supported by National Science Foundation of China (NSFC: 61972329) and GRF grants for Hong Kong Special Administrative Region, P.R. China (CityU 11206120 and CityU 11218821).}, Mingyu Xiao\thanks{School of Computer Science and Engineering, University of Electronic Science and Technology of China, Chengdu, China. Supported by National Natural Science Foundation of China (NSFC: 62372095)}, Boting Yang\thanks{Department of Computer Science, University of Regina, Regina, SK, S4S 0A2, Canada. Supported by NSERC Discovery Research Grant RGPIN-2018-06800.}}
\begin{document}
\maketitle

\begin{abstract}
Deduction is a recently introduced graph searching process in which searchers clear the vertex set of a graph with one move each, with each searcher's movement determined by which of its neighbors are protected by other searchers.  In this paper, we show that the minimum number of searchers required to clear the graph is the same in deduction as in constrained versions of other previously studied graph processes, namely zero forcing and fast-mixed search.  We give a structural characterization, new bounds and a spectrum result on the number of searchers required.  We consider the complexity of computing this parameter, giving an NP-completeness result for arbitrary graphs, and exhibiting families of graphs for which the parameter can be computed in polynomial time. We also describe properties of the deduction process related to the timing of searcher movement and the success of terminal layouts.
\end{abstract}

\section{Introduction}

We introduce three distinct graph searching parameters, and one property concerning graph structure.  We show the parameters are in fact equivalent.  Then we derive various properties of this common parameter. 
Roughly, the three search parameters all involve placing ``searchers'' on vertices of the graph, and then giving them all the ability to move once, subject to various constraints.  We now define the three search parameters precisely; we will see that they are essentially different ways to conceive of the same process.

We first define a constrained version of zero forcing.  Standard terminology in zero forcing starts with a graph having some vertices colored, and the rest of the vertices uncolored.  A colored vertex $c$ can \dword{force} an uncolored vertex $u$ to become colored, if the only uncolored neighbor of $c$ is $u$.  In the standard version of zero forcing, some initial vertices start colored, and the forcing rule is repeatedly applied, in order to include more vertices among the colored vertices. 
A set of initially colored vertices which force all vertices of the graph to become colored is called a \dword{zero forcing set}; the \dword{zero forcing number} of a graph is the minimum size of a zero forcing set.  The zero forcing number was introduced in~\cite{AIM}, and has been extensively studied; see the monograph~\cite{ZFMonograph} for details on zero forcing and its applications, among which are linear algebra \cite{linearalgebra} and quantum physics \cite{quantumexample}. Zero-forcing has connections to graph searching processes, for instance, in~\cite{EMP} it is shown that for any graph without isolated vertices, the zero forcing number of its line graph is an upper bound on the graph's brushing number.  

We define \dword{constrained zero forcing} to refer to the same zero forcing process, but where only the initially colored vertices can force a vertex to become colored.
Given a graph $G$, let $\zf(G)$ be the minimum size of an initial colored set which leads to all the vertices eventually being colored.

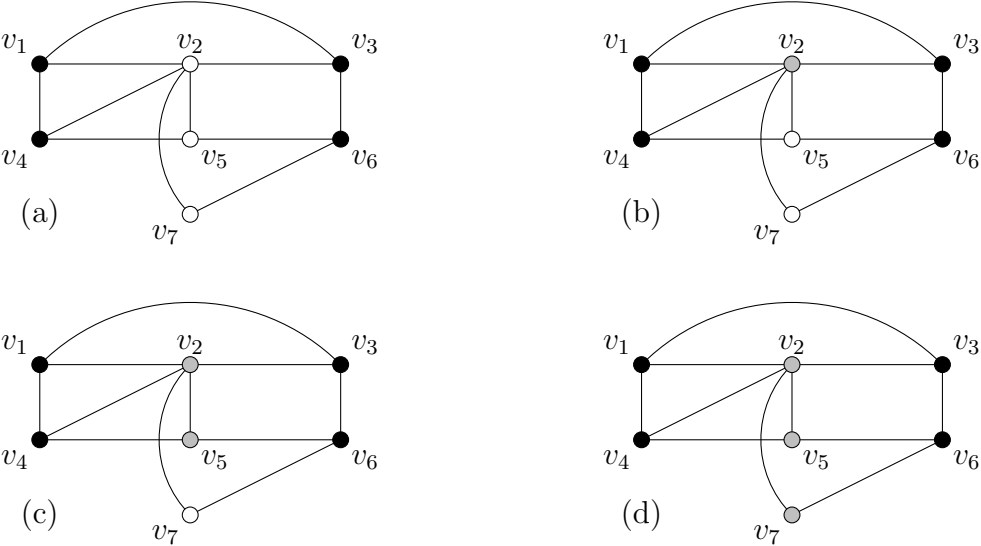
\begin{figure}[htb]
	\centering
	\begin{tikzpicture}[x=1cm,y=1cm,scale=1]

\newcommand*{\xshift}{8}%
\newcommand*{\yshift}{4}%

\foreach \x in {0,\xshift}
\foreach \y in {0,\yshift}
{
\draw (\x,\y+2) -- (\x + 2,\y+2) -- (\x +4,\y+2);
\draw (\x +0,\y+1) -- (\x +2,\y+1) -- (\x +4,\y+1);
\draw (\x +0,\y+1) -- (\x +0,\y+2);
\draw (\x +2,\y+1) -- (\x +2,\y+2);
\draw (\x +4,\y+1) -- (\x +4,\y+2);
\draw (\x +0,\y+1) -- (\x +2,\y+2);
\draw (\x +2,\y+0) -- (\x +4,\y+1);
\draw (\x +2,\y+0) to [out=135, in=225] (\x +2,\y+2); 
\draw (\x +0,\y+2) to [out=45, in=135] (\x +4,\y+2); 

\draw[fill=black] (\x +0,\y+2) circle (3pt) node[above left]{$v_1$};
\draw[fill=black] (\x +4,\y+2) circle (3pt) node[above right]{$v_3$};

\draw[fill=black] (\x +0,\y+1) circle (3pt) node[below left]{$v_4$};
\draw[fill=black] (\x +4,\y+1) circle (3pt) node[below right]{$v_6$};
}

\draw[fill=white] (2,2+\yshift) circle (3pt) node[above ]{$v_2$};
\draw[fill=white] (2,1+\yshift) circle (3pt) node[below right]{$v_5$};
\draw[fill=white] (2,0+\yshift) circle (3pt) node[below left]{$v_7$};

\draw[fill=lightgray] (2+\xshift,2+\yshift) circle (3pt) node[above ]{$v_2$};
\draw[fill=white] (2+\xshift,1+\yshift) circle (3pt) node[below right]{$v_5$};
\draw[fill=white] (2+\xshift,0+\yshift) circle (3pt) node[below left]{$v_7$};

\draw[fill=lightgray] (2,2) circle (3pt) node[above ]{$v_2$};
\draw[fill=lightgray] (2,1) circle (3pt) node[below right]{$v_5$};
\draw[fill=white] (2,0) circle (3pt) node[below left]{$v_7$};

\draw[fill=lightgray] (2+\xshift,2) circle (3pt) node[above ]{$v_2$};
\draw[fill=lightgray] (2+\xshift,1) circle (3pt) node[below right]{$v_5$};
\draw[fill=lightgray] (2+\xshift,0) circle (3pt) node[below left]{$v_7$};

\draw (0,0) node{(c)};
\draw (\xshift,0) node{(d)};
\draw (\xshift,\yshift) node{(b)};
\draw (0,\yshift) node{(a)};

	\end{tikzpicture}
	\caption{A graph $G$ with $\zf(G) = 4$, shown (a) initially; (b) after one vertex is forced; (c) after two vertices are forced; and (d) at the end of the process.} \label{fig:examplezf}
\end{figure}

Consider the graph $G$ in Figure~\ref{fig:examplezf}(a), with constrained zero forcing set $\{v_1, v_3, v_4, v_6\}$, colored black. We see that $v_2$ is the only vertex adjacent to $v_1$ that is uncolored, and hence $v_1$ forces $v_2$ to be colored (gray), as in Figure~\ref{fig:examplezf}(b). After this forcing, we see that $v_4$ now has only one uncolored adjacent vertex, $v_5$, which it forces to be colored (Figure~\ref{fig:examplezf}(c)). Now, vertex $v_7$ is the only uncolored vertex adjacent to $v_2$. However, $v_2$ cannot force $v_7$, as $v_2$ is not one of the initially colored black vertices. However, $v_6$ can force $v_7$, and hence we end with all vertices colored, as required. Thus, $\zf(G) \le 4$, and the result that $\zf(G)=4$ follows by exhaustively checking all possible $3$-vertex sets.

We next define a constrained version of \dword{fast-mixed search}.  Standard terminology in mixed search starts with a graph, all of whose vertices and edges are initially \dword{contaminated}.  
In fast-mixed search, introduced in~\cite{Yang13}, searchers attempt to decontaminate the graph.  
In each step, we may 
\dword{place} a searcher on a contaminated vertex or
\dword{slide} a searcher along an edge $uv$ from vertex $u$ to vertex $v$ if $uv$ is the only contaminated edge incident with $u$.
An edge $e$ is considered \dword{cleared} if either of the following conditions are satisfied:
\begin{enumerate}  
\item Each endpoint of $e$ is occupied by a searcher, or 
\item A searcher slides along $e$. 
\end{enumerate}

Note that rule 2 is only needed when there is a single searcher at $u$, so that
when that searcher moves to an adjacent vertex $v$, rule 1 does not apply.  In the mixed search model of Bienstock and Seymour \cite{mixedsearch}, searchers can be removed from a vertex and subsequently placed on another vertex.  By contrast, fast-mixed search allows only the placing and sliding actions.  In~\cite{FMY14}, it is shown that the minimum number of searchers required to clear a graph in fast-mixed search is equal to its zero forcing number.
In \dword{constrained fast-mixed search}, we restrict each searcher to sliding at most once.
Given a graph $G$, let $\ms(G)$ be the least number of searchers required so that all the edges are cleared.

In the definition of (constrained) fast-mixed search, searchers may be placed at any stage of the process, allowing an interleaving of the placing and sliding actions.  However, for our purposes we will use the convention that all searchers are placed at the beginning, before any sliding actions take place; such a strategy is called \dword{normalized}.  Note that the assumption of a normalized strategy does not affect the number of searchers required to clear the graph.

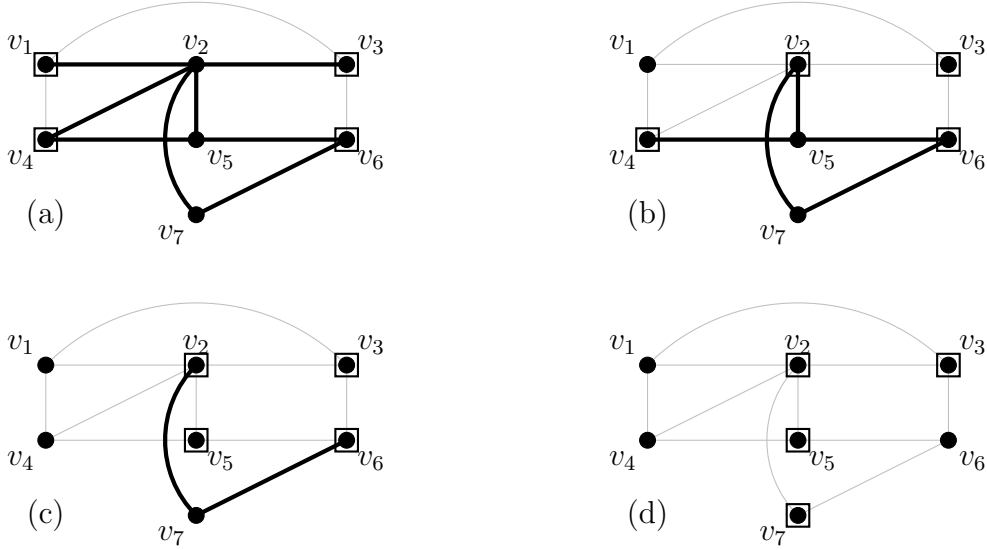
\begin{figure}[htb]
	\centering
	\begin{tikzpicture}[x=1cm,y=1cm,scale=1]
	
	\newcommand*{\xshift}{8}%
	\newcommand*{\yshift}{4}%

\draw [color=lightgray](\xshift,\yshift+2) -- (\xshift + 2,\yshift+2)-- (\xshift +4,\yshift+2);
\draw [ultra thick](\xshift +0,\yshift+1) -- (\xshift +2,\yshift+1) -- (\xshift +4,\yshift+1);
\draw [color=lightgray](\xshift +0,\yshift+1) -- (\xshift +0,\yshift+2);
\draw [ultra thick](\xshift +2,\yshift+1) -- (\xshift +2,\yshift+2);
\draw [color=lightgray](\xshift +4,\yshift+1) -- (\xshift +4,\yshift+2);
\draw [color=lightgray](\xshift +0,\yshift+1) -- (\xshift +2,\yshift+2);
\draw [ultra thick](\xshift +2,\yshift+0) -- (\xshift +4,\yshift+1);
\draw [ultra thick] (\xshift +2,\yshift+0) to [out=135, in=225] (\xshift +2,\yshift+2); 
\draw [color=lightgray](\xshift +0,\yshift+2) to [out=45, in=135] (\xshift +4,\yshift+2);

\draw [ultra thick](0,\yshift+2) -- ( 2,\yshift+2)-- (4,\yshift+2);
\draw [ultra thick](0,\yshift+1) -- (2,\yshift+1) -- (4,\yshift+1);
\draw [color=lightgray](0,\yshift+1) -- (0,\yshift+2);
\draw [ultra thick](2,\yshift+1) -- (2,\yshift+2);
\draw [color=lightgray](4,\yshift+1) -- (4,\yshift+2);
\draw [ultra thick](0,\yshift+1) -- (2,\yshift+2);
\draw [ultra thick](2,\yshift+0) -- (4,\yshift+1);
\draw [ultra thick] (2,\yshift+0) to [out=135, in=225] (2,\yshift+2); 
\draw [color=lightgray](0,\yshift+2) to [out=45, in=135] (4,\yshift+2); 

\draw [color=lightgray](0,2) -- ( 2,2)-- (4,2);
\draw [color=lightgray](0,1) -- (2,1) -- (4,1);
\draw [color=lightgray](0,1) -- (0,2);
\draw [color=lightgray](2,1) -- (2,2);
\draw [color=lightgray](4,1) -- (4,2);
\draw [color=lightgray](0,1) -- (2,2);
\draw [ultra thick](2,0) -- (4,1);
\draw [ultra thick] (2,0) to [out=135, in=225] (2,2); 
\draw [color=lightgray](0,2) to [out=45, in=135] (4,2); 

\draw [color=lightgray](\xshift,2) -- (\xshift + 2,2)-- (\xshift +4,2);
\draw [color=lightgray](\xshift +0,1) -- (\xshift +2,1) -- (\xshift +4,1);
\draw [color=lightgray](\xshift +0,1) -- (\xshift +0,2);
\draw [color=lightgray](\xshift +2,1) -- (\xshift +2,+2);
\draw [color=lightgray](\xshift +4,1) -- (\xshift +4,2);
\draw [color=lightgray](\xshift +0,1) -- (\xshift +2,2);
\draw [color=lightgray](\xshift +2,0) -- (\xshift +4,1);
\draw [color=lightgray](\xshift +2,0) to [out=135, in=225] (\xshift +2,2); 
\draw [color=lightgray](\xshift +0,2) to [out=45, in=135] (\xshift +4,2);

	\foreach \x in {0,\xshift}
\foreach \y in {0,\yshift}
{
	
	\draw[fill=black] (\x +0,\y+2) circle (3pt) node[above left]{$v_1$};
	\draw[fill=black] (\x +4,\y+2) circle (3pt) node[above right]{$v_3$};
	
	\draw[fill=black] (\x +0,\y+1) circle (3pt) node[below left]{$v_4$};
	\draw[fill=black] (\x +4,\y+1) circle (3pt) node[below right]{$v_6$};
	\draw[fill=black] (2+\x,2+\y) circle (3pt) node[above ]{$v_2$};
	\draw[fill=black] (2+\x,1+\y) circle (3pt) node[below right]{$v_5$};
	\draw[fill=black] (2+\x,0+\y) circle (3pt) node[below left]{$v_7$};

}

\newcommand*{\smallshift}{0.15}%

\draw [thick](2+\xshift-\smallshift,2+\yshift-\smallshift) rectangle (2+\xshift+\smallshift,2+\yshift+\smallshift);
\draw [thick](4+\xshift-\smallshift,2+\yshift-\smallshift) rectangle (4+\xshift+\smallshift,2+\yshift+\smallshift);
\draw [thick](0+\xshift-\smallshift,1+\yshift-\smallshift) rectangle (0+\xshift+\smallshift,1+\yshift+\smallshift);
\draw [thick](4+\xshift-\smallshift,1+\yshift-\smallshift) rectangle (4+\xshift+\smallshift,1+\yshift+\smallshift);

\draw [thick](2+\xshift-\smallshift,2-\smallshift) rectangle (2+\xshift+\smallshift,2+\smallshift);
\draw [thick](4+\xshift-\smallshift,2-\smallshift) rectangle (4+\xshift+\smallshift,2+\smallshift);
\draw [thick](2+\xshift-\smallshift,1-\smallshift) rectangle (2+\xshift+\smallshift,1+\smallshift);
\draw [thick](2+\xshift-\smallshift,0-\smallshift) rectangle (2+\xshift+\smallshift,0+\smallshift);

\draw [thick](0-\smallshift,2+\yshift-\smallshift) rectangle (0+\smallshift,2+\yshift+\smallshift);
\draw [thick](4-\smallshift,2+\yshift-\smallshift) rectangle (4+\smallshift,2+\yshift+\smallshift);
\draw [thick](0-\smallshift,1+\yshift-\smallshift) rectangle (0+\smallshift,1+\yshift+\smallshift);
\draw [thick](4-\smallshift,1+\yshift-\smallshift) rectangle (4+\smallshift,1+\yshift+\smallshift);

\draw [thick](2-\smallshift,2-\smallshift) rectangle (2+\smallshift,2+\smallshift);
\draw [thick](4-\smallshift,2-\smallshift) rectangle (4+\smallshift,2+\smallshift);
\draw [thick](2-\smallshift,1-\smallshift) rectangle (2+\smallshift,1+\smallshift);
\draw [thick](4-\smallshift,1-\smallshift) rectangle (4+\smallshift,1+\smallshift);

	\draw (0,0) node{(c)};
	\draw (\xshift,0) node{(d)};
	\draw (\xshift,\yshift) node{(b)};
	\draw (0,\yshift) node{(a)};

	\end{tikzpicture}
	\caption{A graph $G$ with $\ms(G) = 4$, shown (a) initially; (b) after sliding from $v_1$ to $v_2$; (c) after sliding from $v_4$ to $v_5$; and (d) after sliding from $v_6$ to $v_7$.} \label{fig:examplefms}
\end{figure}

In Figure~\ref{fig:examplefms}, we consider the same graph as in our previous example, but in the constrained fast-mixed search model. In this case, in Figure~\ref{fig:examplefms}(a), we see that four searchers have been placed on the vertices $v_1$, $v_3$, $v_4$, and $v_6$, indicated by boxes around the vertices. All edges with both endpoints occupied are then cleared (in gray) and the other edges are contaminated (in black). The searcher on $v_1$ is incident with only one contaminated edge, $v_1v_2$, so slides to $v_2$, clearing this edge. This then clears edges $v_2v_4$ and $v_2v_3$, as in Figure~\ref{fig:examplefms}(b). At this point, only the searcher on $v_4$ may move, sliding along and clearing $v_4v_5$. This subsequently also clears $v_5v_2$ and $v_5v_6$, as in Figure~\ref{fig:examplefms}(c). At this point, only the searcher on $v_6$ may move (as the searcher on $v_2$ has already moved). This searcher slides along $v_6v_7$, clearing that edge, and then subsequently clearing $v_7v_2$, which gives a completely uncontaminated graph. It is then straightforward to argue that $\ms(G) = 4$. 

We recall a recent definition of a search procedure called \dword{deduction}, introduced in~\cite{BDF,thesis}. 
In this procedure, the searchers 
place themselves at some vertices of the graph, those vertices being considered \dword{protected}, and the rest of the vertices are considered \dword{unprotected}.  The searchers then move in a series of \dword{stages}: 
1) at a stage, consider all the vertices that contain searchers who have not yet moved;
2) if the number of searchers at such a vertex is at least the number of unprotected neighbors,
then move one searcher to each of the unprotected neighbors; any excess searchers on a vertex may be moved to an arbitrary neighbor or left in place.  The vertices that the searchers moved from remain protected, and the vertices moved to, called their \dword{targets}, are also considered protected.
The procedure ends when no vertices with unprotected neighbors meet the conditions in (2).
Given a graph $G$, let $\ded(G)$ be the least number of searchers required so that the deduction procedure can start with
that many searchers and protect the entire vertex set.

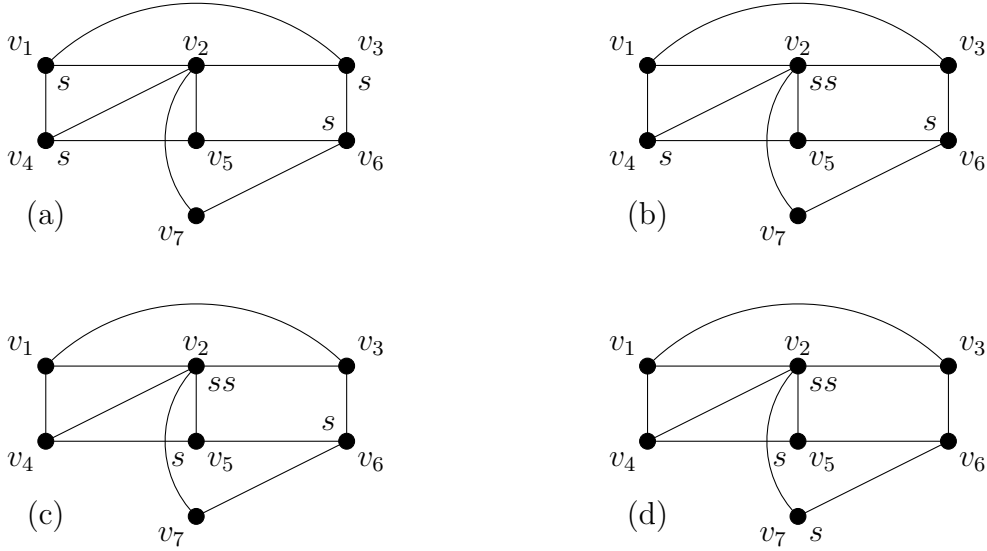
\begin{figure}[htb]
	\centering
	\begin{tikzpicture}[x=1cm,y=1cm,scale=1]
	
	\newcommand*{\xshift}{8}%
	\newcommand*{\yshift}{4}%
	
	\foreach \x in {0,\xshift}
	\foreach \y in {0,\yshift}
	{
		\draw (\x,\y+2) -- (\x + 2,\y+2) -- (\x +4,\y+2);
		\draw (\x +0,\y+1) -- (\x +2,\y+1) -- (\x +4,\y+1);
		\draw (\x +0,\y+1) -- (\x +0,\y+2);
		\draw (\x +2,\y+1) -- (\x +2,\y+2);
		\draw (\x +4,\y+1) -- (\x +4,\y+2);
		\draw (\x +0,\y+1) -- (\x +2,\y+2);
		\draw (\x +2,\y+0) -- (\x +4,\y+1);
		\draw (\x +2,\y+0) to [out=135, in=225] (\x +2,\y+2); 
		\draw (\x +0,\y+2) to [out=45, in=135] (\x +4,\y+2); 
		
		\draw[fill=black] (\x +0,\y+2) circle (3pt) node[above left]{$v_1$};
		\draw[fill=black] (\x +4,\y+2) circle (3pt) node[above right]{$v_3$};
		
		\draw[fill=black] (\x +0,\y+1) circle (3pt) node[below left]{$v_4$};
		\draw[fill=black] (\x +4,\y+1) circle (3pt) node[below right]{$v_6$};

		\draw[fill=black] (2+\x,2+\y) circle (3pt) node[above ]{$v_2$};
		\draw[fill=black] (2+\x,1+\y) circle (3pt) node[below right]{$v_5$};
		\draw[fill=black] (2+\x,0+\y) circle (3pt) node[below left]{$v_7$};

	}
	
	\draw (2+\xshift,2+\yshift) node[below right]{$ss$};
	\draw (0+\xshift,1+\yshift) node[below right]{$s$};
	\draw (4+\xshift,1+\yshift) node[above left]{$s$};
	\draw (0,2+\yshift) node[below right]{$s$};
	\draw (4,2+\yshift) node[below right]{$s$};
	\draw (0,1+\yshift) node[below right]{$s$};
	\draw (4,1+\yshift) node[above left]{$s$};
\draw (2,2) node[below right]{$ss$};
\draw (2,1) node[below left]{$s$};
\draw (4,1) node[above left]{$s$};
\draw (2+\xshift,2) node[below right]{$ss$};
\draw (2+\xshift,1) node[below left]{$s$};
\draw (2+\xshift,0) node[below right]{$s$};

	\draw (0,0) node{(c)};
	\draw (\xshift,0) node{(d)};
	\draw (\xshift,\yshift) node{(b)};
	\draw (0,\yshift) node{(a)};

	\end{tikzpicture}
	\caption{A graph $G$ with $\ded(G) = 4$, shown (a) initially; (b) after the first stage; (c) after the second stage; and (d) at the end of the process.} \label{fig:exampleded}
\end{figure}

Again, returning to the same graph, we will show that $\ded(G) = 4$. Consider the searchers placed on the vertices $v_1$, $v_3$, $v_4$, and $v_6$, one per vertex, as in Figure~\ref{fig:exampleded}(a). Then these vertices are protected. In the first stage, we see that the vertices $v_1$ and $v_3$ both have only one unprotected neighbour, and contain (exactly) one searcher. Thus, these searchers both move to the unprotected neighbour $v_2$, protecting it, as in Figure~\ref{fig:exampleded}(b). In the second stage, the only vertex that contains searchers who have not yet moved that contains at least as many unmoved searchers as unprotected vertices is $v_4$, and hence that searcher moves to $v_5$, protecting that vertex, as in Figure~\ref{fig:exampleded}(c). In the third and final stage, the only unmoved searcher is on $v_6$, and since $v_6$ has only one unprotected neighbour ($v_7$), this searcher moves to $v_7$, protecting it, and giving us Figure~\ref{fig:exampleded}(d). Having shown that $\ded(G) \le 4$, it is straightforward to show that $\ded(G) = 4$.

Reflecting on the three graph searching parameters we can make some general observations.  They all involve a choice of initial vertices, which are colored, or contain searchers (possibly more than one per vertex); we call this initial choice a \dword{layout}.  Then, to carry out the process, we make a sequence of vertex choices from the vertices in the layout (choosing each at most once),
which satisfy particular properties depending on which of the three procedures we are considering. When we choose a vertex, this affects its neighboring vertices and incident edges in some ways; we refer to this generally as \dword{firing} the vertex. 
In particular, when using the deduction model, by a \dword{firing sequence} we mean a sequence of vertex sets $(S_1, S_2, \ldots, S_k)$, such that the vertices of $S_i$ are fired at stage $i$.  For example, in Figure~\ref{fig:exampleded}, the 
firing sequence is $(\{v_1, v_3 \}, \{ v_4 \}, \{ v_6 \})$. 
If from an initial layout, one of the three processes terminates by reaching all the vertices, we refer to that layout as \dword{successful}.

We now define a structural parameter of graphs which is useful in characterizing successful layouts.
For a graph $G$, a subset $M \subseteq E(G)$ is called a \dword{matching} if no two edges of $M$ share a common vertex.   A standard graph parameter is to find the size of a maximum matching in a graph.
We define a variant of this graph parameter.  We say that a matching $M$ in a graph has the \dword{uniqueness property}
if there exists a disjoint pair of vertex sets $V_1$ and $V_2$ so that $M$ is the unique perfect matching in the subgraph of $G$ formed by the edges from $V_1$ to $V_2$. 
We say that the maximal bipartite subgraph of $G$ with bipartition $(V_1,V_2)$ \dword{witnesses} the uniqueness property.
 Given a graph $G$, let
$\mm(G)$  be the maximum size of a matching in $G$ that has the uniqueness property.

\begin{figure}[htb]
	\centering
	\begin{tikzpicture}[x=1cm,y=1cm,scale=1]
	
	\newcommand*{\xshift}{8}%
	\newcommand*{\yshift}{4}%
	
	\foreach \x in {0,\xshift}
	\foreach \y in {0}
	{
		\draw (\x,\y+2) -- (\x + 2,\y+2) -- (\x +4,\y+2);
		\draw (\x +0,\y+1) -- (\x +2,\y+1) -- (\x +4,\y+1);
		\draw (\x +0,\y+1) -- (\x +0,\y+2);
		\draw (\x +2,\y+1) -- (\x +2,\y+2);
		\draw (\x +4,\y+1) -- (\x +4,\y+2);
		\draw (\x +0,\y+1) -- (\x +2,\y+2);
		\draw (\x +2,\y+0) -- (\x +4,\y+1);
		\draw (\x +2,\y+0) to [out=135, in=225] (\x +2,\y+2); 
		\draw (\x +0,\y+2) to [out=45, in=135] (\x +4,\y+2);

	}

\draw[line width = 3] (0,2) -- (2,2);
\draw[line width = 3] (0,1) -- (2,1);
\draw[line width = 3] (2,0) -- (4,1);

\draw[line width = 3] (4+\xshift,2) -- (2+\xshift,2);
\draw[line width = 3] (0+\xshift,1) -- (2+\xshift,1);
\draw[line width = 3] (2+\xshift,0) -- (4+\xshift,1);

\draw[fill=black] (0,2) circle (3pt) node[above left]{$v_1$};
\draw[fill=lightgray] (4,2) circle (3pt) node[above right]{$v_3$};

\draw[fill=black] (0,1) circle (3pt) node[below left]{$v_4$};
\draw[fill=black] (4,1) circle (3pt) node[below right]{$v_6$};

\draw[fill=white] (2,2) circle (3pt) node[above ]{$v_2$};
\draw[fill=white] (2,1) circle (3pt) node[below right]{$v_5$};
\draw[fill=white] (2,0) circle (3pt) node[below left]{$v_7$};
	
\draw[fill=lightgray] (\xshift +0,2) circle (3pt) node[above left]{$v_1$};
\draw[fill=black] (\xshift +4,2) circle (3pt) node[above right]{$v_3$};

\draw[fill=black] (\xshift +0,1) circle (3pt) node[below left]{$v_4$};
\draw[fill=black] (\xshift +4,1) circle (3pt) node[below right]{$v_6$};

\draw[fill=white] (2+\xshift,2) circle (3pt) node[above ]{$v_2$};
\draw[fill=white] (2+\xshift,1) circle (3pt) node[below right]{$v_5$};
\draw[fill=white] (2+\xshift,0) circle (3pt) node[below left]{$v_7$};

	\draw (0,0) node{(a)};
	\draw (\xshift,0) node{(b)};

	\end{tikzpicture}
	\caption{A graph $G$ with $\mm(G) = 3$, shown with (a) $V(M_1) = V(G) - \{v_3\}$ and (b) $V(M_2) = V(G) - \{v_1\}$.} \label{fig:examplemm}
\end{figure}
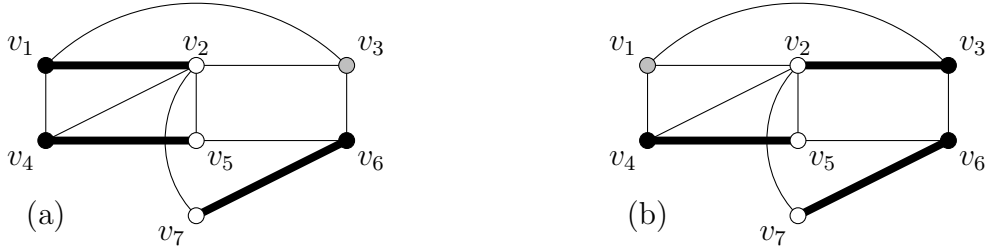

We return to the same graph $G$. In Figure~\ref{fig:examplemm}(a), we see one example of a matching with the uniqueness property, $M_1$, given in bold edges. The partition of the vertices of $M_1$ are colored black and white, respectively, and certainly $M_1$ is the unique matching given by these vertices. The vertex $v_3$ is not used by either part, so is colored gray. Note that  with that particular partition, $M_1$ is unique, but with another partition (as in Figure~\ref{fig:examplemm}(b)) another matching with the uniqueness property is possible, $M_2$. In either case, we see that $\mm(G) \ge 3$, and certainly no matching of size 4 is possible, so $\mm(G) = 3$.

Note that in the case of deduction, we can choose and fire multiple vertices simultaneously.  Thus the constrained zero forcing, deduction and constrained fast-mixed search procedures are all variants on the same idea, where the significant \emph{constraint} is the requirement that we can only choose vertices from the layout to fire, and a vertex (or searcher) can fire at most once. 
One of the main theorems of this paper (Theorem~\ref{thm_main}) states that for a connected graph $G$:
$$\zf(G) = \ms(G) = \ded(G) =  |V(G)| - \mm(G).$$

The remainder of the paper proceeds as follows.  In Section~\ref{Sec:ded}, we discuss properties of deduction, including variations in which we may choose which vertices fire in a given stage. We also explore the layout produced by the positions of searchers at the end of the deduction process. In Section~\ref{Sec:Equivalence}, we prove the equivalence theorem relating the parameters for constrained zero forcing, constrained fast-mixed search, deduction and matchings with the uniqueness property.  Section~\ref{Sec:Basic} considers the behavior of the parameter $\zf(G)$ under various operations, including taking subgraphs and adding or deleting edges, using these results to give a spectrum result for $\zf(G)$; 
the section also establishes the independence number of $G$ as a lower bound for $\zf(G)$. In Section~\ref{Sec:Dismantle}, we consider 
two graph families of interest -- one defined using the operation of 
successively deleting pendent edges, and the other defined by successively joining cliques.
  Section~\ref{Sec:Complexity} gives an NP-completeness result for $\zf(G)$, while Sections~\ref{Sec:Algorithm} and~\ref{Sec:Cacti} explore families of graphs for which $\zf(G)$ can be computed in polynomial time. In Section~\ref{Sec:Conclusion}, we conclude with some discussion and open problems. 
Note that we will write our results in terms of the constrained zero forcing number, but typically write proofs in one of the other models. In Sections 2--5, we will prove results primarily through the use of the deduction model (noting when we do not). However, in Sections 6--8, our proofs will be written in the context of fast-mixed searching, as that model more closely follows the existing algorithmic complexity literature.

\section{Standard layouts and free deduction} \label{Sec:ded}

In fast-mixed search, only one searcher may occupy a given vertex.  Though the deduction process allows multiple searchers per vertex, 
this is in fact unnecessary, as we will be able to achieve the minimum number of searchers by putting at most one searcher on a vertex;
we call a layout \dword{standard} if it has at most one searcher per vertex.
The following result, which was proved in~\cite{BDF}, shows that we may assume that layouts are standard.

\begin{thm} \label{thm:onecoppervertex} \cite{BDF} 
For the deduction game,
if there is a successful layout with $k$ searchers then there is a successful standard layout with $k$ searchers. 
\end{thm}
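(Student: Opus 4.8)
The plan is to take an arbitrary successful layout with $k$ searchers that may stack several searchers on some vertices, and transform it into a standard layout (at most one searcher per vertex) with the same number of searchers that remains successful. The natural strategy is to argue by a potential/extremal argument: among all successful layouts with $k$ searchers, choose one minimizing the total ``excess'' $\sum_{v}(\text{searchers on } v) - 1$ over occupied vertices, or equivalently minimizing the number of vertices holding two or more searchers. I would then show that if this excess is nonzero, we can strictly decrease it while keeping the layout successful and the searcher count fixed, yielding a contradiction. The key mechanism is that in the deduction process the ``excess'' searchers on a vertex are never needed to meet the firing condition: condition (2) only requires that the number of searchers at a vertex be \emph{at least} the number of unprotected neighbours, and any surplus searchers are permitted to move to an arbitrary neighbour or stay put. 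So the extra searchers do real protecting work only through where they eventually land, not through being stacked initially.

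Concretely, first I would fix a successful firing sequence $(S_1,\ldots,S_m)$ witnessing that the layout protects $V(G)$, and track for each searcher the vertex it occupies at termination. The central claim is that one can relocate a redundant stacked searcher to a currently unoccupied vertex at the start without breaking the process. Suppose $v$ carries at least two searchers in the layout. When $v$ fires at some stage, it sends one searcher to each unprotected neighbour and the leftover searchers move arbitrarily; I would pick one such leftover searcher whose final destination is some vertex $w$, and instead place that searcher initially on $w$ (or on an appropriate vertex along the way), reducing the pile on $v$ by one. The point to verify is that $w$ being protected from the outset only \emph{helps}: it can only make neighbours' unprotected-neighbour counts smaller, so every firing that was previously enabled remains enabled, and $v$ still fires (it now has one fewer searcher, but also potentially fewer unprotected neighbours, and in any case the surplus was not counted toward the firing threshold).

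The main obstacle I anticipate is handling the bookkeeping of \emph{when} a relocated searcher becomes useful: moving a searcher's starting position earlier in ``logical time'' could in principle disturb the stage-by-stage dependency of the firing sequence, since a vertex that is now pre-protected might previously have been the unique unprotected neighbour that allowed some other vertex to fire. I would address this by proving a monotonicity lemma: enlarging the initially protected set (without changing the total searcher count in a way that removes protection elsewhere) can never decrease the final protected set, because protecting more vertices only relaxes the firing conditions. Formally, if $P \subseteq P'$ are two initial protected sets realized by the same multiset-minus-one of searchers, then the closure of $P'$ under the deduction rules contains the closure of $P$. With this monotonicity in hand, relocating a surplus searcher from $v$ to any vertex it would eventually have protected keeps the layout successful, strictly reduces the stacking excess, and preserves $k$.

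Iterating this reduction until no vertex holds two or more searchers produces a standard successful layout with $k$ searchers, completing the proof. I would present the monotonicity lemma as the technical heart and the extremal/iterative step as routine once monotonicity is established; the only genuinely delicate case is when the surplus searcher has no well-defined ``eventual destination'' because it was moved arbitrarily, which I would resolve by fixing a canonical choice (e.g.\ leaving such a searcher on $v$ itself and observing it is then simply never needed, so it can be placed on any unoccupied neighbour without harm).
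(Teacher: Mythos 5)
The paper does not actually prove this statement; it is quoted from \cite{BDF} and used as a black box, so there is no in-paper argument to compare against. Judged on its own, your extremal/relocation strategy is sound and can be made to work, but one step needs more care than your write-up gives it. Your mechanism is to peel off a \emph{leftover} searcher from a stacked vertex $v$ and pre-place it at its eventual destination, justified by the slogan that surplus searchers ``were not counted toward the firing threshold.'' That slogan fails precisely in the case where $v$ carries $c\ge 2$ searchers and fires with exactly $c$ unprotected neighbours: there is then no leftover searcher at all, every searcher on $v$ is needed to meet the threshold, and removing one naively leaves $c-1$ searchers against $c$ unprotected neighbours, so $v$ can no longer fire. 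The fix is forced on you: the searcher you relocate must be placed on a vertex $x$ that $v$ actually fires into, so that $x$ is protected from time zero and $v$'s unprotected-neighbour count at its firing stage drops by at least one in lockstep with its searcher count. (Such an $x$ is necessarily unoccupied in the original layout, since it is unprotected when $v$ fires.) Your parenthetical ``but also potentially fewer unprotected neighbours'' gestures at this, but ``potentially'' must be upgraded to ``provably, by choice of destination,'' and the degenerate subcases (the stacked searcher never fires, or is genuinely excess and wanders arbitrarily) handled by dumping it on any initially unoccupied vertex, which exists unless every vertex is occupied, in which case the claim is trivial.

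Relatedly, your monotonicity lemma as stated (``enlarging the initial protected set can never decrease the closure'') is not quite the statement you need, because your transformation does not merely enlarge the protected set --- it also removes a searcher from $v$. The correct inductive claim is about the modified layout as a whole: replaying the original firing sequence, the protected set under the new layout dominates the old one at every stage, and every originally fired vertex still satisfies the firing condition, the only nontrivial check being at $v$, where the choice of relocation target supplies the missing inequality. Each such step increases the number of occupied vertices while fixing $k$, so the iteration terminates in a standard layout. With these repairs your argument is complete and is a perfectly reasonable self-contained proof of the cited result.
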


In the definition of the deduction game, at each stage we fire all the vertices that can fire.  A more flexible version allows any choice of fireable vertices to fire.  By the
\dword{free deduction} game we mean the usual deduction game, except that at each stage any set of fireable vertices may be chosen. From this point on, when we refer to deduction, we will be referring to the free deduction version unless otherwise stated.

\begin{figure}[htb]
	\centering
	\begin{tikzpicture}[x=1cm,y=1cm,scale=1]
	
	\newcommand*{\xshift}{8}%
	\newcommand*{\yshift}{4}%
	
	\foreach \x in {0,\xshift}
	\foreach \y in {0,0}
	{
		\draw (\x,\y+2) -- (\x + 2,\y+2) -- (\x +4,\y+2);
		\draw (\x +0,\y+1) -- (\x +2,\y+1) -- (\x +4,\y+1);
		\draw (\x +0,\y+1) -- (\x +0,\y+2);
		\draw (\x +2,\y+1) -- (\x +2,\y+2);
		\draw (\x +4,\y+1) -- (\x +4,\y+2);
		\draw (\x +0,\y+1) -- (\x +2,\y+2);
		\draw (\x +2,\y+0) -- (\x +4,\y+1);
		\draw (\x +2,\y+0) to [out=135, in=225] (\x +2,\y+2); 
		\draw (\x +0,\y+2) to [out=45, in=135] (\x +4,\y+2); 
		
		\draw[fill=black] (\x +0,\y+2) circle (3pt) node[above left]{$v_1$};
		\draw[fill=black] (\x +4,\y+2) circle (3pt) node[above right]{$v_3$};
		
		\draw[fill=black] (\x +0,\y+1) circle (3pt) node[below left]{$v_4$};
		\draw[fill=black] (\x +4,\y+1) circle (3pt) node[below right]{$v_6$};
		
		\draw[fill=black] (2+\x,2+\y) circle (3pt) node[above ]{$v_2$};
		\draw[fill=black] (2+\x,1+\y) circle (3pt) node[below right]{$v_5$};
		\draw[fill=black] (2+\x,0+\y) circle (3pt) node[below left]{$v_7$};

	}

	\draw (2,2) node[below right]{$s$};
	\draw (4,2) node[below right]{$s$};
	\draw (2,1) node[below left]{$s$};
	\draw (2,0) node[below right]{$s$};
	\draw (2+\xshift,2) node[below right]{$s$};
	\draw (0+\xshift,2) node[below right]{$s$};
	\draw (2+\xshift,1) node[below left]{$s$};
	\draw (2+\xshift,0) node[below right]{$s$};

	\draw (0,0) node{(a)};
	\draw (\xshift,0) node{(b)};

	\end{tikzpicture}
	\caption{Two more final configurations of deduction searchers in $G$ if  (a) the first vertex fired is $v_1$ and (b) the first vertex fired is $v_3$.} \label{fig:multided}
\end{figure}
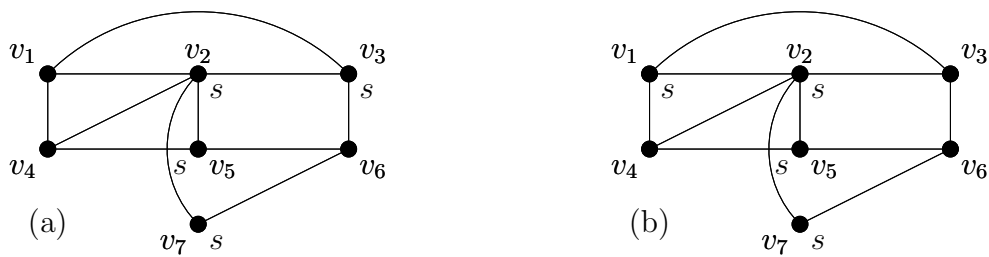

The next theorem shows that in the deduction model, the choice of what fires does not matter.
This basic result shows that deduction has a certain robustness, and
allows us to more easily prove facts about deduction.

\begin{thm} \label{thm_ded_all_same}
Given any layout, the vertices protected by any two free deduction games are the same.
\end{thm}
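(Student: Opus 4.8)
The plan is to reduce the dynamic, choice-laden deduction process to a static monotone closure and then run a standard confluence argument. The first observation I would record is that the protected set can only grow: once a vertex is protected it stays protected, so if $\Pi$ denotes the current protected set and we write $N(u)$ for the neighbourhood of a vertex $u$, then firing $u$ replaces $\Pi$ by $\Pi\cup N(u)$ (the searchers sent to the unprotected neighbours protect exactly those neighbours, and any excess searchers go to vertices of $N(u)$ that are already protected). Consequently, at the end of any play the protected set is exactly $L\cup\bigcup_{u\ \text{fired}}N(u)$, where $L$ is the layout, so it suffices to control which neighbourhoods get absorbed. The second observation, which is the crux of the setup, is that because each searcher moves at most once, the searchers a vertex $u$ can actually use to fire are precisely the searchers originally placed on $u$ by the layout: any searcher that arrives at $u$ later has already moved and is spent. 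Hence the firing capacity of $u$ is a fixed number $c(u)$, independent of the play, and $u$ is fireable exactly when $c(u)\ge |N(u)\setminus \Pi|$. In particular fireability is \emph{persistent}: since $c(u)$ is constant and $|N(u)\setminus\Pi|$ is non-increasing, once $u$ is fireable it remains fireable until it fires.

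With this in hand I would prove that any two completed (terminal) plays $P$ and $Q$ from the same layout have the same protected set by showing $\Pi_P\subseteq\Pi_Q$, the reverse following by symmetry. List the firings of $P$ in the order they occur (breaking ties within a stage arbitrarily) as $u_1,\dots,u_m$, and argue by induction on $i$ that $N(u_i)\subseteq\Pi_Q$; summing over $i$ then gives $\Pi_P = L\cup\bigcup_i N(u_i)\subseteq \Pi_Q$. For the inductive step, the protected set $\Pi_P^{(i)}$ of $P$ at the moment $u_i$ fires is contained in $L\cup\bigcup_{j<i}N(u_j)$, which by the inductive hypothesis is contained in $\Pi_Q$. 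Since $u_i$ was fireable in $P$ at that moment, $c(u_i)\ge |N(u_i)\setminus \Pi_P^{(i)}| \ge |N(u_i)\setminus\Pi_Q|$, where the second inequality uses $\Pi_P^{(i)}\subseteq\Pi_Q$. Thus $u_i$ already satisfies the firing condition at the terminal state of $Q$. Now invoke terminality: if $N(u_i)\setminus\Pi_Q$ were nonempty, then $u_i$ could fire in $Q$ and protect a new vertex, contradicting that $Q$ has ended; hence $N(u_i)\subseteq\Pi_Q$, completing the induction.

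I expect the main obstacle to be conceptual rather than computational: pinning down the firing model precisely enough that the claim ``the capacity $c(u)$ is play-independent'' is airtight. The tempting worry is that excess searchers dumped onto a neighbour might let that neighbour fire against more unprotected vertices in some plays than in others, which would destroy confluence; the resolution is exactly the at-most-once rule, which renders such arriving searchers inert so that only the originally placed searchers ever fire. A secondary technical point to handle carefully is simultaneity within a stage: fireability is judged against the protected set at the start of the stage, so I must only claim that the set $u_i$ is tested against is \emph{contained in} $L\cup\bigcup_{j<i}N(u_j)$ rather than equal to it, which is all the induction needs. Everything else is routine monotone-closure bookkeeping. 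Note that this argument never assumes the layout is standard, so it applies to arbitrary layouts as stated, and Theorem~\ref{thm:onecoppervertex} is not required.
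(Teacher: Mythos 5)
Your argument is correct and is essentially the paper's proof recast as a direct induction rather than a minimal-counterexample contradiction: both hinge on the same three facts, namely that the protected set only grows, that the searchers a vertex can use to fire are exactly those the layout placed there (so fireability, once attained, persists), and that terminality of the other play then forces each fired vertex's neighbourhood to be protected. The only elision is the trivial case in which $u_i$ has already fired in $Q$; there $N(u_i)\subseteq\Pi_Q$ holds immediately from that firing, so the terminality step is only needed when $u_i$'s searchers are still unmoved in $Q$.
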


\begin{proof}

Let $S_1$ and $S_2$ be two distinct firing sequences originating from the same layout of vertices $L$.  
It suffices to show that every vertex protected by $S_1$ is protected by $S_2$.
Assume for contradiction that there is a vertex $x$ that is protected 
by $S_1$ but not by $S_2$.  
Suppose the firing sequence $S_1$ is $(A_1, A_2, \ldots, A_a)$ and $j$ ($1 \le j \le a$) is minimum such that $(A_1, \ldots, A_j)$ protects some vertex which $S_2$ never protects; let $x$ be such a vertex.
Let $A$ be the vertices protected by the initial firing of $S_1$ given by $(A_1, \ldots, A_{j-1})$ if $j > 1$, and if $j = 1$, we can just take $A$ to be the vertices in $L$ before any firing.  There is some $k$ so that the initial firing of $S_2$ given by $(B_1, \ldots, B_k)$ protects $B$, where
$A \subseteq B$.  Now consider vertex $x$; it is protected in $S_1$ because some vertex $y$ of $A_j$  fired a searcher into $x$.

We now arrive at the contradiction that $x$ would have to be protected by $S_2$.
In $S_2$, after $B_k$ has fired, since $A \subseteq B$, all the neighbors
of $y$ protected after the firing of $A_{j-1}$ in $S_1$, are also protected after the firing of $B_k$.  Since $y$ could fire in $S_1$, it can also fire in $S_2$, unless all the neighbors
of $y$ are fired into, in $S_2$, before $y$ gets a chance to fire.
Either way, $x$ is protected in $S_2$, providing our contradiction.
\end{proof}

Thus in proving facts about the deduction number we are free to choose what vertices fire at each stage.  A useful special case is to choose a single vertex to fire at each stage, calling this \dword{single-fire deduction}.

Consider again the initial layout given in Figure~\ref{fig:exampleded}(a). While in our previous example, we fired vertices $v_1$ and $v_3$ simultaneously, we now see that this is not necessary. If, instead, we fired only $v_1$, then the searcher on $v_3$ would be ``stranded'' for the remainder of the process. As indicated by Theorem~\ref{thm_ded_all_same}, however, when this process was completed, the entire graph would be protected, but the final configuration of searchers would be that shown in Figure~\ref{fig:multided}(a) instead of that of Figure~\ref{fig:exampleded}(d). On the other hand, if the first vertex fired is solely $v_3$, then the final configuration would be the one shown in Figure~\ref{fig:multided}(b). In short, even from a single successful layout, we may obtain many possible configurations of searchers once the process is complete.

We now note what happens to a successful layout as vertices are fired.
Note that it is false that firing only a single vertex of a successful layout leads to a successful layout.
For example, considering $C_4$ with two adjacent vertices occupied, firing a single vertex does not yield a successful layout.  However, if we continued until
all the vertices were protected, by firing the other vertex, we would have a successful layout.  So given a layout $L$, we say another layout $L^*$ is 
a \dword{terminal layout} of $L$, if $L^*$ is the result of some firing sequence which leads to all the vertices being protected. In the above example, we saw that the intermediate layouts between $L$ and $L^*$ may not be successful, but the
next theorem points out that $L^*$ is in fact successful, answering an open question from \cite{BDF}.

\begin{thm} 
Given any successful layout, any corresponding terminal layout is also successful.
\end{thm}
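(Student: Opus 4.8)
The plan is to exhibit an explicit firing sequence from $L^*$ that re-protects the whole vertex set, obtained by ``running the forces backwards.'' First I would reduce to a convenient form: by Theorem~\ref{thm:onecoppervertex} I may assume the successful layout $L$ is standard, and by Theorem~\ref{thm_ded_all_same} every firing sequence from $L$ protects all of $V(G)$. Fix the firing sequence producing $L^*$, with its stages protecting target sets $T_1, T_2, \ldots, T_r$, and write $T = V(G) \setminus V(L)$ for the set of vertices that start unprotected. Since $L$ is standard, each occupied vertex carries one searcher and can fire only when it has at most one unprotected neighbour; hence each vertex $s$ that genuinely fires into a neighbour sends its searcher to a single target $g(s) \in T$, and the resulting map $g$ from the set $S$ of these ``sources'' onto $T$ is surjective (every target gets protected) but need not be injective, since several sources may fire into a common target within one stage. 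Ignoring for the moment searchers that are merely relocated while idle, the terminal layout $L^*$ occupies exactly $V(G) \setminus S$, with each target $t$ carrying $|g^{-1}(t)|$ searchers.

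The core of the argument is a reversal. I would fire the targets from $L^*$ in reverse stage order $T_r, T_{r-1}, \ldots, T_1$, and claim that each $t \in T_i$, at the moment it is fired, has as its unprotected neighbours exactly the set $g^{-1}(t)$ and carries exactly $|g^{-1}(t)|$ searchers, so that it fires one searcher back to each source that originally fired into it. The key structural fact driving this is that when a source $s$ fires in stage $i$, all of its neighbours other than $g(s)$ are already protected; consequently $s$ has no neighbour in $T_j$ for any $j \ge i$ except $g(s)$ itself. Read contrapositively, any source adjacent to a target $t \in T_i$ that fired no earlier than stage $i$ must satisfy $g(s) = t$. Starting the reverse process with the sources $S$ as the only unprotected vertices, an induction on the reverse stages then shows that when $T_i$ is fired the sources already re-protected are precisely those that fired in later stages, so the only still-unprotected neighbours of $t \in T_i$ are its preimages $g^{-1}(t)$; the searcher count matches because exactly those preimages piled their searchers onto $t$ in the forward process. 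Thus every source in $S$ is re-protected and $L^*$ is successful.

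The step I expect to be the main obstacle is verifying the ``exactly $g^{-1}(t)$'' claim together with the matching searcher count, which is the constrained analogue of the reversal phenomenon for zero forcing and is precisely where the stage structure and the adjacency property above must be combined carefully. A secondary technicality is the possibility that an idle searcher (one whose vertex has no unprotected neighbour when it fires) is relocated to an arbitrary protected neighbour rather than left in place, so that $L^*$ is not literally supported on $V(G) \setminus S$. I would dispose of this either by arguing that it suffices to treat terminal layouts in which such excess searchers stay put, or by observing that each vertex vacated by a wandering searcher has all of its neighbours protected and still occupied, so it can be re-protected by firing one such neighbour into it after the reversal above has been carried out.
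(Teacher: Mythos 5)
Your proposal is correct and follows essentially the same route as the paper: both reverse the firing sequence, both rest on the observation that any vertex which has already fired and is adjacent to a not-yet-protected target must have fired into that very target (so a target's unprotected neighbours in the reversal are exactly its preimages), and both match the searcher count $|g^{-1}(t)|$ to fire back; the paper merely packages the induction by deleting the last stage's targets from the graph rather than inducting on reverse stages in place. One small slip: your ``read contrapositively'' sentence should say a source adjacent to $t \in T_i$ that fired \emph{no later} than stage $i$ satisfies $g(s)=t$ (not ``no earlier''), though the subsequent induction uses the correct version.
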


\begin{proof}

Suppose our graph is $G$ and we have layout $L$.
To get from $L$ to some terminal layout $L^*$ there is some firing sequence 
$(S_1, \ldots, S_k)$.  We argue that we can simply ``reverse'' this firing sequence to go from $L^*$ to $L$, where by reversing it we mean:
take the firing sequence $(S'_k, \ldots, S'_1)$, where $S'_j$ fires from the targets of $S_j$ back to $S_j$.
Now we argue this works by induction on $k$, the length of the firing sequence.

Consider the last firing, $S_k$, and suppose it fires into the unoccupied (and unprotected) vertices $S_k'$.
Then we can in fact fire $S_k'$ back to $S_k$, as follows.  Adjacent to 
$S_k'$ are the vertices of $S_k$ and some other vertices of $G$.  Among these other vertices, there cannot be an unoccupied protected vertex --- since such a vertex is protected because something fired from it earlier in the firing process, and so would have had to fire into the vertex of $S_k'$ in order to be allowed to fire.  This means that all the edges of concern are between $S_k'$ and $S_k$.

Consider the firing from $S_k$ to $S_k'$. For any vertex in $S_k'$, if it has $d$ neighbors in $S_k$, it received $d$ searchers in the original firing. Thus, it can fire back into $S_k$.

Now remove the vertices $S_k'$ from the graph, that is the unoccupied, but just protected vertices, to arrive at the graph $G^-$. We likewise define a layout $L^-$ based on $L$ but with these vertices removed. Thus $(S_1, \ldots, S_{k-1})$
is a successful firing sequence starting with $L^-$ in $G^-$.  By the inductive hypothesis we can reverse it to get that $(S'_{k-1}, \ldots, S'_1)$ is successful, and then
we can append $S'_k$ to the left to get our conclusion in $G$.
\end{proof}

\section{Equivalence Theorem}\label{Sec:Equivalence}

As noted in the introduction, there are many similarities between the constrained zero forcing, fast-mixed search and deduction processes.  The primary goal of this section is to establish an equivalence between the number of searchers/colored vertices required in these processes, as well as a structural characterization involving matchings with the uniqueness property.  In particular, the main result of this section is Theorem~\ref{thm_main}.

\begin{lemma} \label{lem:bipartite-graph}
If $G$ is a graph that contains a perfect matching with the uniqueness property and  $B$ is the bipartite graph 
 that witnesses this, 
then $B$ must contain a degree 1 vertex.
\end{lemma}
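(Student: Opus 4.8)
The plan is to recognize this as the classical fact that a bipartite graph with a unique perfect matching must have a vertex of degree one, and to prove it by producing an $M$-alternating cycle whenever every degree is at least two.

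First I would fix notation. Let $B$ be the witnessing bipartite graph, with parts $V_1$ and $V_2$, and let $M$ be the perfect matching of $B$ guaranteed by the uniqueness property; by hypothesis $M$ is the \emph{only} perfect matching of $B$. Since $M$ saturates every vertex of $B$, each vertex has $\degree_B(v) \ge 1$, so it suffices to rule out the case that every vertex has degree at least $2$. Accordingly, I would argue by contradiction and assume $\degree_B(v) \ge 2$ for all $v \in V_1 \cup V_2$.

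The heart of the argument is to orient the edges of $B$ so that every vertex has positive out-degree. I would orient each matching edge from its $V_2$-endpoint to its $V_1$-endpoint, and each non-matching edge from its $V_1$-endpoint to its $V_2$-endpoint. Then every $b \in V_2$ has out-degree exactly $1$ (its unique $M$-edge points out), while every $a \in V_1$ has out-degree $\degree_B(a) - 1 \ge 1$ (all of its non-matching edges point out). In a finite digraph in which every vertex has positive out-degree, repeatedly following out-edges from any start vertex must eventually revisit a vertex, yielding a directed cycle $C$.

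Finally I would observe that, by construction, $C$ alternates between non-matching edges (leaving $V_1$) and matching edges (leaving $V_2$); in particular it contains no directed $2$-cycle, since an edge cannot be both in $M$ and out of $M$. Hence $C$ is a genuine $M$-alternating cycle, and the symmetric difference $M \triangle E(C)$ is a perfect matching of $B$ different from $M$, contradicting uniqueness. This contradiction forces some vertex of $B$ to have degree exactly $1$. The main obstacle is producing the alternating cycle cleanly, and the orientation trick is what sidesteps a fiddly walk-extension argument, reducing ``an $M$-alternating cycle exists'' to the elementary ``positive out-degree everywhere yields a directed cycle.'' The only edge case to check is that $B$ is nonempty, which holds since $M$ is a perfect matching of a graph arising as a witness; otherwise the claim is vacuous.
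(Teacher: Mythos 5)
Your proof is correct and takes essentially the same approach as the paper's: assume every vertex of $B$ has degree at least $2$, produce an $M$-alternating cycle, and take its symmetric difference with $M$ to obtain a second perfect matching, contradicting uniqueness. The only difference is in how the cycle is extracted --- the paper uses a greedy walk with relabeling of the matched pairs, while you use the out-degree orientation trick --- and both are valid.
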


\begin{proof}
Assume for contradiction that $B$ is given by bipartition $(V_1, V_2)$ and every vertex in $B$ has degree at least 2.  Let $M = \{u_1 v_1, \ldots, u_k v_k  \}$ be a perfect matching in $B$. For a contradiction we find another perfect matching in $B$.  In fact, if we ever have an even cycle with alternating edges from $M$ (called this a \emph{vicious cycle}), then we can just use the edges in the cycle, but not in $M$, to arrive at our contradictory second perfect matching.
Since each $u_i$ has degree 2, we can proceed from $i= 1$ to $k$, considering neighbors of $u_i$ other than $v_i$.  We can assume $u_i$ has neighbor $v_{i+1}$ until some point (possibly when $i = k$) when some $u_j$ has only neighbors from among $\{ v_1, \ldots, v_j  \}$, at which point we have a vicious cycle, and hence a contradiction.
\end{proof}

\begin{thm} \label{thm_main}
If $G$ is a connected graph, then $$\zf(G) = \ms(G) = \ded(G) =  |V(G)| - \mm(G).$$
\end{thm}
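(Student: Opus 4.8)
The plan is to prove the chain of equalities in two independent pieces: first that the three search parameters coincide, and then that the common value equals $|V(G)|-\mm(G)$. For the first piece, I would invoke Theorem~\ref{thm:onecoppervertex} to restrict attention to standard layouts and Theorem~\ref{thm_ded_all_same} together with the single-fire remark to run deduction one vertex at a time. Under these conventions the three processes are literally the same bookkeeping: a standard single-fire deduction step fires a vertex $y$ exactly when $y$ has a single unprotected neighbor and sends its searcher there, which is precisely the constrained zero-forcing rule (protected $=$ colored) and precisely a legal slide in constrained fast-mixed search along $y$'s unique contaminated edge (each searcher sliding at most once matches ``only the layout may force''). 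Hence a layout is successful in one model iff it is in the others, with the same number of searchers, giving $\zf(G)=\ded(G)=\ms(G)$. The only delicacy is that constrained fast-mixed search counts edge-clearing rather than vertex-protection, so I would note that connectivity (no isolated vertices) is exactly what rules out the discrepancy $\ms(K_1)=0\neq 1=\zf(K_1)$; this is where the hypothesis is used.

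It then remains to show $\zf(G)=|V(G)|-\mm(G)$, which I would prove as two inequalities. For $\zf(G)\ge |V(G)|-\mm(G)$, take a minimum constrained zero-forcing set $C$, let $U=V(G)\setminus C$, and list the forced vertices $u_1,\ldots,u_m$ in the order they are colored, with $u_i$ forced by $c_i\in C$. Since each $c_i$ forces at most once the $c_i$ are distinct, so setting $V_1=\{c_1,\ldots,c_m\}$ and $V_2=U$ (disjoint, as $V_1\subseteq C$) gives a perfect matching $M=\{c_iu_i\}$ of the bipartite graph of all edges between $V_1$ and $V_2$. At the moment $c_i$ forces $u_i$, every other neighbor of $c_i$ is already colored, so $c_i\sim u_j$ forces $j\le i$; the biadjacency matrix is triangular with nonzero diagonal, whence $M$ is the \emph{unique} perfect matching. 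Thus $M$ has the uniqueness property and $\mm(G)\ge m=|U|=|V(G)|-\zf(G)$.

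For the reverse inequality $\zf(G)\le |V(G)|-\mm(G)$, take a matching $M$ of size $\mm(G)$ with the uniqueness property, witnessed by the maximal bipartite subgraph $B$ with bipartition $(V_1,V_2)$, and claim that $C=V(G)\setminus V_2$ is a constrained zero-forcing set. I would prove by induction, using Lemma~\ref{lem:bipartite-graph}, that $B$ admits a triangular ordering of its matched pairs: the Lemma supplies a degree-$1$ vertex $w$; if $w\in V_1$ I peel it off as the smallest-indexed pair and if $w\in V_2$ as the largest-indexed pair, recursing on $B$ with $w$ and its (forced) partner removed, which again is the maximal bipartite subgraph of the induced graph and again has a unique perfect matching. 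The resulting order $q_1,\ldots,q_k$ of $V_2$ is a valid forcing order: because $B$ is \emph{maximal}, the $G$-neighbors of $p_i\in V_1$ inside $V_2$ are exactly its $B$-neighbors $\{q_j: j\le i\}$, so once $q_1,\ldots,q_{i-1}$ are colored, $p_i\in C$ has the unique uncolored neighbor $q_i$ and forces it. This colors all of $V_2$, so $\zf(G)\le |C|=|V(G)|-|M|=|V(G)|-\mm(G)$.

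The main obstacle is this last direction, specifically extracting the triangular forcing order from mere uniqueness of the perfect matching. The inductive peeling must correctly place both a $V_1$-side and a $V_2$-side degree-$1$ vertex into the ordering and verify the triangular condition survives, and the argument hinges on using maximality of $B$ so that a forcing vertex ``sees'' no uncolored $V_2$-neighbors beyond those recorded in $B$. Once these bookkeeping points are pinned down the two inequalities combine to give $\zf(G)=|V(G)|-\mm(G)$, completing the chain.
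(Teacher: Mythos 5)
Your proposal is correct and follows essentially the same route as the paper: the identification $\zf=\ded=\ms$ via standard layouts and single-fire deduction (Theorems~\ref{thm:onecoppervertex} and~\ref{thm_ded_all_same}), and the two inequalities against $|V(G)|-\mm(G)$ via the triangular-ordering argument showing the fired edges form a matching with the uniqueness property in one direction, and degree-one peeling (Lemma~\ref{lem:bipartite-graph}) applied to the layout $V(G)\setminus V_2$ in the other. The only differences are cosmetic: you are terser than the paper on the one nontrivial point in $\ms=\ded$ (verifying that every \emph{edge} $uv$ with $u$ initially occupied is eventually cleared, because $u$'s searcher cannot depart for another vertex while $v$ is still unprotected), and slightly more careful than the paper in explicitly handling the case where the degree-one vertex supplied by the lemma lies in $V_2$ rather than $V_1$.
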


\begin{proof} We show that (1) $\zf(G)=\ded(G)$, (2) $\ms(G)=\ded(G)$ and (3) $\ded(G)=|V(G)|-\mm(G)$. 
\medskip

\noindent
(1) Proof that $\zf(G) = \ded(G)$:

This is immediate from Theorems~\ref{thm:onecoppervertex} and~\ref{thm_ded_all_same} by noting that single-fire deduction starting with a standard layout 
is just constrained zero forcing.
\medskip

\noindent
(2) Proof that $\ms(G) = \ded(G)$: 

If we restrict to standard layouts for deduction, the sliding action in fast-mixed search is the same as the single-firing action in deduction.  
The major difference between the deduction and constrained fast-mixed search processes is that in deduction we are concerned with clearing vertices, while in fast-mixed search it is edges.  

It is easy to see that for any connected graph $G$, if all edges become cleared in constrained fast-mixed search, then the corresponding moves in deduction clear all vertices, since any cleared edge must either have both endvertices occupied at some point, or else a searcher moving from one endvertex to the other.  We now show that if all vertices are cleared in deduction (beginning with a standard layout), then the corresponding sliding actions clear all edges in fast-mixed search.  
Consider an edge $uv$ in $G$.  If $u$ and $v$ are both occupied in the initial layout or if $u$ and $v$ are both occupied at the end of the process, then the edge $uv$ becomes cleared.  Suppose now that one of these vertices, say $u$, is occupied in the initial layout and that a searcher moves to $v$ at some point in the process.  If the searcher on $u$ moves to $v$, then the edge $uv$ is cleared.  Otherwise, as $v$ is an unoccupied vertex adjacent to $u$, the searcher on $u$ cannot move to another vertex before $v$ becomes occupied; thus, at the point when a searcher moves to $v$, vertices $u$ and $v$ are both occupied so that the edge $uv$ is cleared.
\medskip

\noindent
(3) Proof that $\ded(G) = |V(G)| - \mm(G)$:

We show $\ded(G) =  |V(G)| - \mm(G)$, first showing $\ded(G) \leq |V(G)|-\mm(G)$.
Let $B=(V_1,V_2,E_B)$ be a bipartite subgraph of $G$ with a perfect matching of size $\mm(G)$ which witnesses the uniqueness property. 
We construct a successful layout $L$ for deduction containing $|V(G)| - |E_B|$ 
searchers as follows.  Place a searcher on each vertex of $V(G)-V_2$; that is, a searcher occupies each vertex of $V_1$ as well as each vertex not in $B$.  We show that this layout is successful by induction on $|V_1|$.  By Lemma~\ref{lem:bipartite-graph}, we may assume without loss of generality that $V_1$ contains a vertex $u$ of degree 1 in $B$. The neighbor $v$ of $u$ in $B$ is the only unprotected neighbor of $u$ in $G$, and so in the first stage the searcher on $u$ moves to $v$.  By Theorem~\ref{thm_ded_all_same}, we may consider a free deduction game in which only $u$ moves in the first stage.  Let $G'=G-\{u,v\}$, and $B'=B-\{u,v\}$.  Then $B'$ contains a 
unique perfect matching, so by the induction hypothesis, the layout $L$ restricted to $G'$ is successful in $G'$.  It follows that $L$ is a successful layout in $G$, and thus $\ded(G) \leq |V(G)|-\mm(G)$.

We now show that $|V(G)|-\mm(G) \leq \ded(G)$.  Consider a standard successful layout $L$ in $G$.  By Theorem~\ref{thm_ded_all_same}, $L$ is successful in 
single-fire deduction, so we henceforth assume this is the case.  In particular, no two searchers will move to protect the same vertex, so that the edges on which searchers move form a matching $M$.  Let $S$ be the set of vertices whose searchers move at any point, let $T$ be the set of their target vertices, and let $B$ be the bipartite graph induced by the edges with one endvertex in $S$ and the other in $T$.  

We show that $M$ has the uniqueness property.  
Let $(s_1, s_2, \ldots, s_{\ell})$ be the sequence of vertices fired in $S$, and for each $i \in \{1, \ldots, \ell\}$, let $t_i \in T$ be the vertex to which the searcher on $s_i$ moves; that is, $M$ consists of the edges $\{s_i,t_i\}$, $i \in \{1, \ldots, \ell\}$.  Since $s_1$ is the first vertex fired, its only neighbor in $B$ can be $t_1$; thus $\{s_1,t_1\}$ must be in any perfect matching in $B$.  Since $s_2$ fires immediately after $s_1$, it must be that $t_2$ is the only vertex of $T\setminus\{t_1\}$ adjacent to $s_2$; thus $\{s_2,t_2\}$ is in any perfect matching in $B$. Continuing inductively, we see that $M$ is the unique perfect matching in $B$. 
\end{proof}

In the remainder of this paper, we will state our results in terms of the constrained zero forcing number.  However, as a consequence of Theorem~\ref{thm_main}, to prove these results we may use any of the constrained zero forcing, deduction or constrained fast-mixed search processes, or the structural characterization regarding matchings with the uniqueness property.

\section{Basic Properties} \label{Sec:Basic}

In this section we prove some basic properties of the parameter $\zf(G)$.  We will consider how it behaves with respect to subgraphs and operations on graphs, prove additional bounds on the parameter and give a spectrum result.  In many of the proofs in this section, we approach results on $\zf(G)$ using the deduction process.

The parameter $\zf(G)$ is not monotonic with respect to taking subgraphs in general; that is, if $H$ is a subgraph of $G$, then $\zf(H)$ may be greater or less than $\zf(G)$.  To see this, note that $K_{1,n-1}$ is a subgraph of the wheel $W_n$ of order $n$, which is in turn a subgraph of $K_n$.  In~\cite{BDF}, it is noted that $\zf(K_{1,n-1}) = n-1 = \zf(K_n)$, while $\zf(W_n) = \lceil \frac{n}{2} \rceil$.  In contrast, the following result shows that induced subgraphs behave much more predictably.

\begin{thm} \label{thm:subgraph}
If $H$ is an induced subgraph of $G$ then $\zf(H) \le \zf(G)$.
\end{thm}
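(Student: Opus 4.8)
The plan is to work directly in the constrained zero forcing model, since the statement concerns $\zf$ and the defining restriction of that model---that only initially colored vertices may force---is exactly what the argument needs. Let $C$ be a minimum constrained zero forcing set of $G$, so $|C| = \zf(G)$, and fix a sequence of forces $c_1 \to u_1, c_2 \to u_2, \ldots$ that colors all of $V(G)$, where $c_i \in C$ is the forcer and $u_i$ the forced vertex at step $i$. Two elementary facts drive the count: each forced vertex $u_i$ is distinct (a colored vertex is never recolored) and each forcer $c_i$ is distinct (once a vertex forces its unique uncolored neighbor it has no uncolored neighbor thereafter), so $i \mapsto c_i$ and $i \mapsto u_i$ are both injective.

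Write $R = V(G) \setminus V(H)$ for the deleted vertices. First I would define the candidate initial set for $H$ by $C_H = (C \cap V(H)) \cup X$, where $X = \{ u_i \in V(H) : c_i \in R \}$ collects exactly those vertices of $H$ that, in $G$, were forced from outside $H$. The size bound is then immediate: the injection $u_i \mapsto c_i$ restricts to an injection of $X$ into $C \cap R$, so $|X| \le |C \cap R|$; since the vertices of $X$ are forced (hence not in $C$) we get $|C_H| = |C \cap V(H)| + |X| \le |C \cap V(H)| + |C \cap R| = |C| = \zf(G)$.

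The correctness of $C_H$ is the heart of the argument, and I would establish it by replaying the forces $c_1 \to u_1, c_2 \to u_2, \ldots$ in the same order inside $H$, maintaining the invariant that, just before step $i$, every vertex of $V(H)$ colored in $G$ is also colored in $H$. The step analysis splits on step $i$: if $u_i \notin V(H)$ there is nothing to do; if $u_i \in V(H)$ but $c_i \in R$ then $u_i \in X \subseteq C_H$ is already colored; and if both $c_i, u_i \in V(H)$ then, because $H$ is induced, every neighbor of $c_i$ in $H$ is a neighbor in $G$, so all of them except $u_i$ are colored in $G$ and hence---by the invariant---colored in $H$. The crucial point is that $c_i \in C \cap V(H) \subseteq C_H$, so $c_i$ is initially colored in $H$ and is therefore permitted to force in the constrained model; it thus forces $u_i$ (or $u_i$ is already colored), preserving the invariant. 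Applying the invariant after the final force, all of $V(H)$ is colored, so $C_H$ is a constrained zero forcing set and $\zf(H) \le |C_H| \le \zf(G)$.

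The main obstacle is precisely the bookkeeping around forces that cross the boundary of $H$: a vertex of $H$ may have been forced in $G$ by a vertex of $R$, and a force useful in $G$ may be irrelevant in $H$. The definition of $X$ together with the two injectivity facts is what keeps these ``imported'' vertices from inflating the count past $\zf(G)$, while the induced-subgraph hypothesis (no edges of $H$ are lost) guarantees each replayed force remains legal. I expect the only care needed is to confirm that following $G$'s forcing order yields a valid constrained forcing sequence in $H$ at every step, which the invariant supplies. As an alternative route one could instead invoke Theorem~\ref{thm_main}: after extending the identity $\zf = |V| - \mm$ to disconnected graphs (both sides being additive over connected components), it suffices to show $\mm(G-v) \ge \mm(G) - 1$ for a single deleted vertex $v$ and then induct; but the direct forcing argument is self-contained and sidesteps the connectivity caveat, so I would present it as the main proof.
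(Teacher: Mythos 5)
Your proposal is correct and is essentially the paper's own argument: the paper works in the (equivalent) single-fire deduction model, builds the initial set for $H$ as the occupied vertices of $H$ plus the targets in $H$ of searchers originating outside $H$ (your set $X$), and replays the firing sequence restricted to $H$, using the induced-subgraph hypothesis exactly as you do. Your version just makes the size bound via the injection $u_i \mapsto c_i$ and the replay invariant slightly more explicit.
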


\begin{proof}
We consider this as a deduction problem.
Let $L$ be a successful layout on $G$ with $\ded(G)$ searchers. By Theorem~\ref{thm:onecoppervertex}, we may assume that $L$ is a standard layout.  
We construct a successful layout $L'$ in $H$ with at most $\ded(G)$ searchers as follows.  Each vertex of $H$ which is occupied in $L$ remains occupied in $L'$.  Additionally, place a searcher on each vertex of $H$ which is a target of a vertex from $L$ in $V(G) \setminus V(H)$.  

We show that $L'$ is successful.  Consider any
successful single-firing sequence for $L$ in $G$; call it $A = (a_1, \ldots, a_k)$.
Let $B = (b_1, \ldots, b_j)$ be the result of removing any $a_i$ from $A$ 
if $a_i$ is in $V(G) \setminus V(H)$, but otherwise maintaining the order.
We show that $B$ is a successful single-firing sequence for $L'$ in $H$.
Whenever there is call to fire some $b_i$ in $B$, it can fire since the same vertex $b_i$ appears in $A$, where it could fire; since $H$ is induced, it still has any necessary edge to move along (and note that if in $A$, $b_i$ fired into $V(G) \setminus V(H)$, then $b_i$ can stay still in $B$).  In particular, the appropriate neighbors of $b_i$ in $B$ are protected (i.e. all but one neighbor) either by previous firings or by the placement of some fired searcher from $A$ directly at its target vertex. 
Since the firing of $A$ protected all of $G$, the firing of $B$ protects all of $H$.
\end{proof}

The following basic bounds on $\zf(G)$ appear in~\cite{BDF}.
\begin{thm}\cite{BDF}
For any graph $G$ of order $n$ with at least one edge, $\left\lceil \frac{n}{2} \right\rceil \leq \zf(G) \leq n-1$.
\end{thm}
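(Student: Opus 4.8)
The plan is to prove the two inequalities directly in the deduction model, using the identity $\zf(G)=\ded(G)$, which follows from Theorems~\ref{thm:onecoppervertex} and~\ref{thm_ded_all_same} (the reduction ``single-fire deduction from a standard layout is just constrained zero forcing'' nowhere uses connectivity, so the identity is available for arbitrary graphs with an edge). For the upper bound $\zf(G)\le n-1$, I would simply exhibit a successful layout with $n-1$ searchers. Since $G$ has an edge, fix one, say $uv$, and place a searcher on every vertex of $V(G)\setminus\{v\}$. Every neighbor of $u$ other than $v$ is then occupied and hence protected, so $v$ is the unique unprotected neighbor of $u$; in the first stage the searcher on $u$ fires into $v$, and now all $n$ vertices are protected. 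Thus this layout is successful and $\ded(G)\le n-1$.

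For the lower bound $\zf(G)\ge\lceil n/2\rceil$, I would show by a counting argument that no successful layout uses fewer than $\lceil n/2\rceil$ searchers. Take any successful layout with $k$ searchers; by Theorem~\ref{thm:onecoppervertex} I may assume it is standard, and by Theorem~\ref{thm_ded_all_same} I may run it as single-fire deduction. In single-fire deduction each searcher fires at most once, and when the searcher on a vertex $s$ fires it moves to the unique unprotected neighbor $t$ of $s$, protecting exactly one new vertex $t$. The instant $t$ is fired into it becomes protected, so no later firing can take $t$ as its target; hence distinct firings have distinct targets, and the number of targets is at most the number of searchers that move, which is at most $k$. Every protected vertex is either one of the $k$ initially occupied vertices or a target of some firing, so at termination the protected set has size at most $2k$. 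Success forces this size to equal $n$, giving $2k\ge n$ and therefore $k\ge\lceil n/2\rceil$. (Equivalently, one can run the whole count directly in constrained zero forcing: each initially colored vertex forces at most once, since after coloring its unique uncolored neighbor it has none left, so at most $k$ vertices are forced and $k+k\ge n$; this version needs no appeal to connectivity at all.)

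Combining the two bounds gives $\lceil n/2\rceil\le\ded(G)\le n-1$, and since $\ded(G)=\zf(G)$ the statement follows. I do not expect a genuine obstacle here; the only point needing care is the bookkeeping in the lower bound, namely that each firing protects exactly one new vertex and that distinct firings have distinct targets, so the protected set never exceeds $2k$. This distinctness is precisely the ``moves form a matching'' observation already made in the proof of Theorem~\ref{thm_main}, so I would simply reuse that reasoning rather than re-derive it.
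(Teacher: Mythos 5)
Your proof is correct. Note that the paper itself gives no proof of this statement --- it is quoted from~\cite{BDF} as a known result --- so there is no in-paper argument to compare against; I can only assess your derivation on its own terms, and it holds up. The upper bound construction (occupy all of $V(G)\setminus\{v\}$ for an edge $uv$ and fire $u$ into $v$) is valid. For the lower bound, the key bookkeeping is sound: with a standard layout run as single-fire deduction, each of the $k$ searchers moves at most once and protects at most one new vertex, targets of distinct firings are distinct because a firing must target an unprotected vertex, and no target vertex can itself fire since it carries no unmoved searcher; hence at most $2k$ vertices are ever protected and success forces $2k\ge n$. Your parenthetical remark that the same count runs directly in constrained zero forcing (each initially colored vertex forces at most once, since after forcing its unique uncolored neighbor it has none left) is the cleanest version and sidesteps both the standard-layout reduction and any worry about connectivity, since the identity $\zf(G)=\ded(G)$ is stated in the paper only inside Theorem~\ref{thm_main}, which assumes $G$ connected. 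One tiny point of care: in the deduction model a vertex with one searcher and zero unprotected neighbors is permitted to move its searcher to an arbitrary neighbor, but such a move protects nothing new, so your bound of at most one newly protected vertex per searcher is unaffected.
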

These bounds are both tight; as pointed out in~\cite{BDF}, paths achieve the lower bound, while complete graphs achieve the upper bound.  A natural spectrum question arises:  given an integer $n \geq 2$ and an integer $d$ such that 
$\left\lceil \frac{n}{2} \right\rceil \le d \le n-1$, is there a graph of order $n$ with $\zf(G)=d$?  To answer this question, we first consider the effect of adding or deleting an edge on the value of $\zf(G)$.  We then answer the question affirmatively in Corollary~\ref{corSpectrum}.

\begin{thm} \label{thm_addedge}
If $G$ is a graph and $G^*$ is $G$ with an edge added or removed between two vertices of $G$, then $\zf(G) - \zf(G^*)$ can only be $0$, $+1$, or $-1$. 
\end{thm}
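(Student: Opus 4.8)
The plan is to work in the constrained zero forcing model (equivalently, single-fire deduction from a standard layout, via Theorem~\ref{thm_main}), exploiting the defining feature of this model: only the initially colored vertices ever force. Since adding an edge and removing an edge are inverse operations, it suffices to fix a non-edge $e=uv$ and prove the two inequalities $\zf(G+e)\le \zf(G)+1$ and $\zf(G)\le \zf(G+e)+1$; applying these to the pair $\{G,G^*\}$ (in whichever order matches the edge operation) yields $|\zf(G)-\zf(G^*)|\le 1$, which is the claim. In both directions I would take an optimal colored set together with a single-fire forcing sequence, run the same sequence on the modified graph, and add at most one extra colored vertex to repair the single place where the sequence can fail.

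For the first inequality I would take an optimal colored set $Z$ for $G$ with its forcing sequence and run it on $G+e$. The only incidences that change are those at $u$ and $v$, so a force can break only when $u$ or $v$ forces while its partner across $e$ is still uncolored. The key observation is that \emph{at most one} endpoint can be obstructed: if $u$ forces then $u\in Z$, so $u$ is colored throughout and can never be the uncolored partner that blocks $v$. Hence, if some endpoint $x$ is obstructed, its partner $\bar x$ is uncolored at that moment (so $\bar x\notin Z$) and is itself not obstructed; adding $\bar x$ to the colored set removes the obstruction at a cost of at most one. Since pre-coloring a vertex only ever removes uncolored neighbors, no other force is invalidated (the force that would otherwise have colored $\bar x$ simply becomes unnecessary), so the enlarged set of size at most $\zf(G)+1$ is a forcing set for $G+e$.

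For the second inequality I would argue symmetrically, running an optimal forcing sequence for $H=G+e$ on $H-e=G$. Deleting $e$ changes a force only at $u$ or $v$, and a force by $u$ (say) is destroyed only if $u$ forced its partner $v$ \emph{across} the edge $e$; for any other target the lost neighbor $v$ was already colored at that time, so the force survives. Because only initially colored vertices force and each forces at most once, the edge $e$ can be the forcing edge of at most one force, say $u\to v$. Adding $v$ to the colored set makes that force unnecessary while leaving every other force intact (again by monotonicity of coloring), so $G=H-e$ has a forcing set of size at most $\zf(G+e)+1$.

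The main obstacle, and the crux of both directions, is the bookkeeping behind ``at most one force can break,'' which rests entirely on the constraint that only initially colored vertices force, so that any forcing endpoint is colored from the outset; once this is pinned down, the single-vertex repair and the monotonicity argument are routine. As an alternative I could instead use $\zf(G)=|V(G)|-\mm(G)$ and show $|\mm(G)-\mm(G+e)|\le 1$ by comparing a maximum uniqueness-matching and its witnessing bipartition before and after the change, splitting on whether $e$ runs between the two parts and whether $e$ lies in the matching, and invoking the standard fact that deleting a matched pair from a bipartite graph with a unique perfect matching again leaves a unique perfect matching. I would keep the deduction-style argument as the primary proof, in keeping with the methods of this section.
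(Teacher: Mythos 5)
Your proof is correct and follows essentially the same route as the paper's: take an optimal solution, observe that only the endpoints of $e$ can be affected and that at most one of them can actually be obstructed, repair with a single extra colored vertex, and obtain the reverse inequality by exchanging the roles of $G$ and $G^*$. The paper phrases this in the deduction model and proves the single inequality $\ded(G^*)\le\ded(G)+1$ uniformly for both adding and removing an edge before swapping $G$ and $G^*$, whereas you prove both inequalities for edge addition in the constrained zero forcing model with a more explicit monotonicity argument, but the underlying one-vertex-repair idea is the same.
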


\begin{proof}
This proof uses the deduction characterization.
Let  $x$ and $y$ be distinct vertices in $G$, and let $G^*$ be either of the following: the result of adding the edge $e = \{x, y\}$ if $G$ did not contain $e$, or the result of removing $e$, if $G$ contained $e$.  Our goal is to show the following: 
$$\ded(G) - 1 \le \ded(G^*) \le \ded(G) + 1.$$

Let $L$ be a standard successful layout on $G$ using \ded(G) searchers.
We first note the following (call it the \emph{first inequality}): $$\ded(G^*) \le \ded(G) + 1.$$
To see this, consider the case in which  
no searcher fires from $x$ or $y$ in $L$. 
Then the presence or absence of edge $e$ is irrelevant to the success of $L$, so the same layout works in $G^*$.  In the case that a searcher fires from at least one of $x$ or $y$ in $L$, we can simply add a searcher to the other vertex of $e$ in $G^*$ to ensure that searcher can still fire,  or that both $x$ and $y$ are protected in the case that a searcher fires from one to the other in $G$.

Now we show the following: $$\ded(G) - 1 \le \ded(G^*).$$
To see this, 
we can reverse the role of $G$ and $G^*$, adding or removing edge $e$ to/from $G^*$ in order to arrive back at $G$.  Applying the first inequality above,
we have $\ded(G) \le \ded(G^*) + 1$, as required. 
\end{proof}

Note that Theorem~\ref{thm_addedge} answers a question posed in~\cite{thesis}, which asked how large the difference between $\ded(G^*)$ and $\ded(G)$ can be.  
We can apply Theorem~\ref{thm_addedge} to obtain the next corollary, which
shows that for graphs $G$ on $n$ vertices,
$\zf(G)$ can take on all the values between the minimum possibility of 
$\lceil \frac{n}{2} \rceil$ and the maximum possibility of $n-1$.
This follows by starting with $C_n$ and
successively adding one edge at a time to get $K_n$; note that $\zf(C_n) = \lceil \frac{n}{2} \rceil$ and $\zf(K_n)=n-1$~\cite{BDF}.  
Since Theorem~\ref{thm_addedge} shows that each edge can increase $\zf$ at most 1, we must achieve all the intermediate values.

\begin{cor} \label{corSpectrum}
For any order $n$ and integer $d$ with $\lceil \frac{n}{2} \rceil \leq d \leq n-1$, there is a graph $G$ with $\zf(G)=d$.
\end{cor}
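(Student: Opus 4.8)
The plan is to realize every admissible value $d$ as the constrained zero forcing number of some intermediate graph in a chain that runs from a cycle up to a complete graph, adding one edge at a time, and then invoke a discrete intermediate value argument powered by Theorem~\ref{thm_addedge}. This is precisely the strategy gestured at in the paragraph preceding the corollary, so the work is really just packaging that observation carefully.

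First I would dispose of the trivial boundary case $n = 2$, where the only admissible value is $d = 1 = \zf(K_2)$, so I may assume $n \ge 3$ and the cycle $C_n$ is well defined. Fix such an $n$. I would build a sequence of graphs $G_0, G_1, \ldots, G_m$ all on the same vertex set of size $n$, where $G_0 = C_n$, $G_m = K_n$, and each $G_{i+1}$ is obtained from $G_i$ by adding a single new edge; such a sequence exists because $C_n$ is a subgraph of $K_n$ and the missing edges can be added one at a time in any order. By the values cited from~\cite{BDF} I have $\zf(G_0) = \zf(C_n) = \lceil \frac{n}{2} \rceil$ and $\zf(G_m) = \zf(K_n) = n - 1$.

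The core step is the discrete intermediate value argument. Writing $f(i) = \zf(G_i)$, Theorem~\ref{thm_addedge} guarantees $|f(i+1) - f(i)| \le 1$ for every $i$, since $G_{i+1}$ differs from $G_i$ by exactly one edge. Given a target $d$ with $\lceil \frac{n}{2} \rceil \le d \le n-1$, I would take the least index $i$ for which $f(i) \ge d$; this index exists because $f(m) = n-1 \ge d$. If $i = 0$ then $d \le f(0) = \lceil \frac{n}{2} \rceil \le d$, forcing $f(0) = d$. If $i > 0$ then minimality gives $f(i-1) \le d-1$, while the step bound gives $f(i) \le f(i-1) + 1 \le d$, so again $f(i) = d$. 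In either case $G_i$ is a graph of order $n$ with $\zf(G_i) = d$, completing the proof.

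There is essentially no serious obstacle here, since the heavy lifting is done by Theorem~\ref{thm_addedge} and by the two endpoint values, all of which I am free to assume. The only points needing care are the small-$n$ boundary case and phrasing the ``least index'' search precisely, so that it is the one-sided bound $f(i) - f(i-1) \le 1$ (rather than the full two-sided bound $|f(i) - f(i-1)| \le 1$) that actually delivers $f(i) = d$.
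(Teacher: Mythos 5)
Your proposal is correct and follows exactly the paper's argument: interpolate from $C_n$ to $K_n$ by adding one edge at a time and apply Theorem~\ref{thm_addedge} together with a discrete intermediate value argument. The only difference is that you spell out the boundary case $n=2$ and the least-index selection explicitly, which the paper leaves implicit.
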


Given a graph of order $n$, we may ask what the maximum number of edges that can be consecutively added which each increase, decrease or do not change the value of the constrained zero forcing parameter.  Since this parameter is bounded between $\lceil \frac{n}{2} \rceil$ and $n-1$, clearly the number of edges that can be added so that each decreases the deduction number, is bounded above by $\lfloor \frac{n}{2} \rfloor$.  Indeed, this bound can be attained.  To see this, consider the star $S_{n}$ with $(n-1)$ leaves $\ell_1, \ldots, \ell_{n-1}$, where $n$ is even.  Adding in turn the edges $\ell_1 \ell_2, \ell_3 \ell_4, \ldots, \ell_{n-3}\ell_{n-2}$ decreases the deduction number by 1 each time.

An example of adding linearly many edges in the number of vertices, each of which increases the deduction number by $1$, is as follows.  Start with a star $S_{m+2}$, centered at vertex $v$ and with $m+1$ leaves.  To all but one leaf, attach a copy of $K^-_4$, where $K^-_4$ is $K_4$ with an edge removed, but no vertices removed; see Figure~\ref{K4-ex}.  

\begin{figure}[htb]
\centering
\begin{tikzpicture}[x=1cm,y=1cm,scale=1]
\draw[fill=black] (3,4) circle (3pt);
\draw[fill=black] (6,4) circle (3pt) node[above right]{$v$};
\draw (3,4) -- (6,4); \draw (6,4) -- (1,2); \draw (6,4) -- (6,2);  \draw (6,4) -- (12,2);
\draw (0,1) -- (2,1); \draw (0,1) -- (1,0);  \draw (0,1) -- (1,2); \draw (2,1) -- (1,0); \draw (2,1) -- (1,2); 
\draw (5,1) -- (7,1); \draw (5,1) -- (6,0);  \draw (5,1) -- (6,2); \draw (7,1) -- (6,0); \draw (7,1) -- (6,2); 
\draw (11,1) -- (13,1); \draw (11,1) -- (12,0);  \draw (11,1) -- (12,2); \draw (13,1) -- (12,0); \draw (13,1) -- (12,2); 
\foreach \x in {0,2,5,7,11,13}{
	\draw(\x,1)[fill=black]circle (3pt);
}
\draw (9,1) node{$\ldots$};
\foreach \x in {1,6,12}{
	\draw[fill=black] (\x,0) circle (3pt);
	\draw[fill=black] (\x,2) circle (3pt);
}
\end{tikzpicture}
\caption{A graph $G$ to which a linear number of edges can be added which increase the deduction number.} \label{K4-ex}
\end{figure}
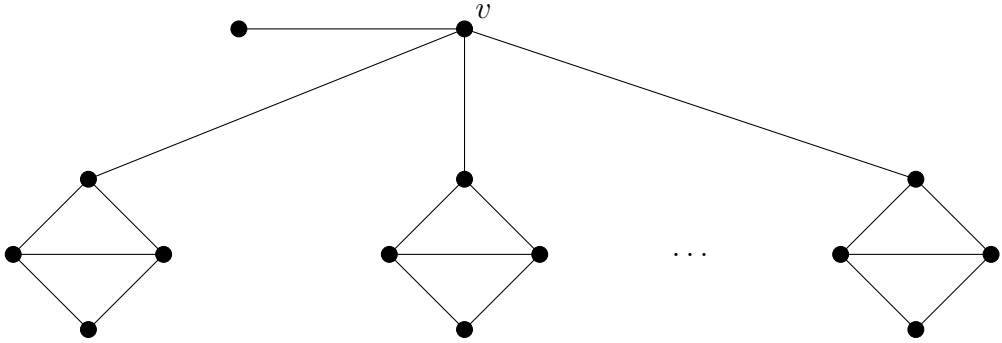
The graph $G$ obtained has $4m+2$ vertices and $\zf(G)=2m+1$: one searcher is needed to clear the remaining leaf, and this searcher also clears $v$; each copy of $K^-_4$ requires two additional searchers.  If we add the ``missing'' edge from a copy of $K_4$, the deduction number increases by 1, as three searchers become necessary to clear the resulting copy of $K_4$.  Thus, we may add $m = \lfloor \frac{|V(G)|}{4}\rfloor$ edges sequentially which each increase the deduction number.

On the other hand, the number of edges we can add which do not affect the deduction number may be quadratic in the number of vertices.  To see this, consider the following graph $G$ with vertex set $V(G)=A \cup B$ where $\{a_1, \ldots, a_m\}$ and $B=\{b_1, \ldots, b_m\}$, and edge set $E(G) = \{a_1 x \mid x \in V(G) \setminus \{a_1\}\} \cup \{a_ib_i \mid 2 \leq i \leq m\}$.  (Note that $G$ may be viewed as constructed from a star centred at $a_1$ by adding a matching joining pairs of leaves.)  It is straightforward to verify that $\zf(G)=m = \frac{|V(G)|}{2}$; one successful layout is to place a searcher on each vertex of $A$.  Adding edges sequentially of the form $a_i a_j$ or $b_ib_j$ does not change the deduction number, nor does subsequently adding edges of the form $a_i b_j$ where $i \leq j$.  Each of these additions still produces a graph with a matching of size $m$ with the uniqueness property witnessed by the bipartite subgraph induced by the edges between $A$ and $B$.  The total number of edges we have added to $G$ is
\[
\binom{m-1}{2} + \binom{m}{2} + \binom{m-1}{2} = \frac{(m-1)(3m-4)}{2}. 
\]

We conclude this section by discussing a lower bound on $\zf(G)$ supplemental to the results of~\cite{BDF}.  
Given a graph $G$, a subset of ${\cal I} \subseteq V(G)$ is an \dword{independent set} of $G$ if there is no edge in $G$ that connects two vertices of ${\cal I} $.
A \emph{maximum independent set} of $G$ is an independent set of $G$ with the largest possible size; this size is called the \dword{independence number} of $G$, denoted by $\alpha(G)$.
The following theorem establishes a relation between $\zf(G)$ and $\alpha(G)$.

\begin{thm}  \label{thm:LB-max-indep}
If $G$ is a graph, then $\zf(G) \geq \alpha(G)$.
\end{thm}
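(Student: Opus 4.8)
The plan is to prove $\zf(G) \geq \alpha(G)$ by exhibiting, for any maximum independent set $\mathcal{I}$, a lower bound on the number of searchers forced by the structure of $\mathcal{I}$. Since Theorem~\ref{thm_main} gives $\zf(G) = |V(G)| - \mm(G)$, it suffices to prove the equivalent inequality $\mm(G) \leq |V(G)| - \alpha(G)$. Here the right-hand side is exactly the size of a minimum vertex cover of $G$ (by the Gallai identity $\alpha(G) + \tau(G) = |V(G)|$, where $\tau(G)$ is the vertex cover number), so the goal becomes $\mm(G) \leq \tau(G)$.

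First I would recall that $\mm(G)$ is the size of a maximum matching \emph{with the uniqueness property}, which is in particular an ordinary matching. Every ordinary matching $M$ satisfies $|M| \leq \tau(G)$, since any vertex cover must contain at least one endpoint of each edge of $M$, and these chosen endpoints are distinct because the edges of $M$ are pairwise disjoint. Thus $\mm(G) \leq \tau(G) = |V(G)| - \alpha(G)$, which rearranges to $|V(G)| - \mm(G) \geq \alpha(G)$, i.e. $\zf(G) \geq \alpha(G)$ by Theorem~\ref{thm_main}. This is the cleanest route, translating the searching statement into a purely structural matching statement and then invoking the elementary bound that matching number is at most vertex cover number.

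Alternatively, I would give a direct argument in the deduction (or constrained zero-forcing) model, which may be more self-contained. Take a maximum independent set $\mathcal{I}$ and a standard successful layout $L$. The key observation is that each vertex of $\mathcal{I}$ must be ``accounted for'' by a searcher: every vertex of $\mathcal{I}$ is either occupied initially or is the target of some firing. Since $\mathcal{I}$ is independent, no edge of the firing matching $M$ (from the proof of Theorem~\ref{thm_main}) joins two vertices of $\mathcal{I}$, so at most one endpoint of each firing edge lies in $\mathcal{I}$. Counting: the $\alpha(G)$ vertices of $\mathcal{I}$ must each receive a distinct searcher, and one can argue that no single searcher can be responsible for two independent vertices, forcing at least $\alpha(G)$ searchers overall.

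The main obstacle I anticipate is making the direct deduction argument fully rigorous, specifically verifying that distinct vertices of $\mathcal{I}$ genuinely require distinct searchers and that a searcher arriving at an independent vertex cannot have previously occupied another independent vertex. The matching-theoretic route sidesteps this by reducing to the standard inequality $|M| \leq \tau(G)$, so I expect the cleanest presentation to go through the characterization $\zf(G) = |V(G)| - \mm(G)$ together with the Gallai identity, treating the matching bound as the single substantive step.
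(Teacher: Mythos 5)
Your main route is correct: since a matching with the uniqueness property is in particular a matching, $\mm(G)\leq \tau(G)$ by the standard vertex-cover bound, and the Gallai identity $\alpha(G)+\tau(G)=|V(G)|$ then gives $|V(G)|-\mm(G)\geq \alpha(G)$, which is $\zf(G)\geq\alpha(G)$ via Theorem~\ref{thm_main}. The paper proves the same inequality by a direct count inside the deduction model: it fixes a maximum independent set $\mathcal{I}$ and a standard successful layout, observes that the searcher moves form a matching $M=M_1\cup M_2$ (edges meeting $\mathcal{I}$ in one vertex, and edges avoiding $\mathcal{I}$), notes that the number of searchers is $|V|-|M_1|-|M_2|$, and bounds $|M_1|+2|M_2|\leq |V\setminus\mathcal{I}|$. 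This is exactly your argument with the explicit vertex cover $V\setminus\mathcal{I}$ substituted for the abstract inequality $|M|\leq\tau(G)$, so the two proofs share the same counting at heart; yours is more modular (it leans on Theorem~\ref{thm_main} and two textbook facts), while the paper's is self-contained and re-derives the searcher count from the firing matching. Two small points to tidy up: Theorem~\ref{thm_main} as stated requires $G$ connected, whereas the present theorem is asserted for all graphs, so you should either note that all the parameters involved (including $\alpha$) are additive over components or argue per component; and your ``alternative'' direct argument, as you yourself anticipate, is the one needing care --- the clean way to make it rigorous is precisely the paper's bookkeeping with $M_1$ and $M_2$, not a claim that each vertex of $\mathcal{I}$ ``receives a distinct searcher'' (vertices of $\mathcal{I}$ may also be occupied initially and never be targets, so the accounting has to go through the matching, not through an injection from $\mathcal{I}$ to searchers arriving at $\mathcal{I}$).
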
 

\begin{proof} We again consider the deduction model.
	For a graph $G=(V,E)$, let ${\cal I}$ be a maximum independent set.  Since there is no edge in $G$ connecting two vertices of ${\cal I}$, every searcher must move between a vertex of ${\cal I}$ and a vertex of $V \setminus {\cal I}$ or between two vertices of $V \setminus {\cal I}$. 
	Consider a standard successful layout of $G$ using the minimum number of searchers. Let $M_1$ be a set of edges between vertices of ${\cal I}$ and vertices of $V \setminus {\cal I}$ such that a searcher traverses these edges, and let $M_2$ be a set of edges between vertices of $V \setminus {\cal I}$ that a searcher traverses. Any vertex that is not a target vertex or a fired vertex contains an unmoving searcher, of which there are $|V| - |M_1| - |M_2|$. 
	Note that $|V| \geq |{\cal I}| + |M_1| + 2|M_2| \geq |{\cal I}| + |M_1| + |M_2|$. Hence $\ded(G) = |V| - |M_1| - |M_2| \geq  |{\cal I}|.$
\end{proof}

The difference $\zf(G)-\alpha(G)$ can be arbitrarily large, as can the ratio $\frac{\zf(G)}{\alpha(G)}$; this can be observed by considering the complete graph $K_n$.  However, we note that the bound of Theorem~\ref{thm:LB-max-indep} is tight.  In~\cite{BDF}, the deduction number of trees is characterized by producing a standard layout by successively placing cops on leaves and ``pruning'' the tree.  It is easily verified that in this layout, the set of occupied vertices is independent.  We thus have the following corollary.

\begin{cor} \label{cor:treeindep}
    If $T$ is a tree, then $\zf(T)=\alpha(T).$
\end{cor}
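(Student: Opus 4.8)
The plan is to combine the lower bound already established in Theorem~\ref{thm:LB-max-indep} with a matching upper bound coming from a specific optimal layout for trees. Theorem~\ref{thm:LB-max-indep} gives $\zf(T) \geq \alpha(T)$ for every graph, and in particular for every tree, so it remains only to establish the reverse inequality $\zf(T) \leq \alpha(T)$.

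For the upper bound I would invoke the characterization of the tree deduction number from~\cite{BDF}, which produces an optimal standard layout by repeatedly selecting a leaf, placing a searcher on it, and pruning. A searcher placed on a leaf $\ell$ has exactly one neighbor $v$ in the current subtree, so in the deduction process it fires along $\ell v$ and protects $v$; one then removes the resolved vertices and continues on the smaller forest, placing searchers on any isolated vertices that remain. The central claim is that the set $O$ of occupied vertices produced by this procedure is independent in $T$. Granting this, since the layout is optimal we have $|O| = \ded(T) = \zf(T)$, and because every independent set has size at most $\alpha(T)$, we obtain $\zf(T) = |O| \leq \alpha(T)$; together with the lower bound this forces $\zf(T) = \alpha(T)$.

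The remaining work is therefore to verify that $O$ is independent, and I would do this by tracking the pruning step. Each time a searcher on a leaf $\ell$ fires into its neighbor $v$, the vertex $v$ becomes protected and is removed before any further searchers are placed, so $v$ itself is never occupied and no subsequently occupied vertex reaches $\ell$ through $v$. Proceeding by induction on the number of pruning steps, whenever a new searcher is placed its tree-neighbors in the original graph that were already occupied would have been pruned away at an earlier step, so no occupied vertex is adjacent in $T$ to a previously occupied one.

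The step I expect to require the most care is precisely this bookkeeping: one must argue that independence holds in the original tree $T$ rather than merely within each successively pruned subtree, since an edge of $T$ joining two chosen vertices could in principle survive across pruning steps. The key observation that makes this routine is that a leaf's unique neighbor is always the vertex fired into and hence removed, so an edge between two occupied vertices can never persist; making this invariant explicit and maintaining it through the induction is the heart of the argument.
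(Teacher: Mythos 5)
Your proposal is correct and follows essentially the same route as the paper: the lower bound $\zf(T)\geq\alpha(T)$ from Theorem~\ref{thm:LB-max-indep} combined with the observation that the pruning layout of~\cite{BDF} is an optimal layout whose occupied set is independent. The paper dismisses the independence check as ``easily verified,'' whereas you spell out the invariant (a chosen leaf's unique surviving neighbor is always the vertex fired into and removed), which is exactly the right bookkeeping.
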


\section{Dismantlable graphs} \label{Sec:Dismantle}

Considering trees, and particularly the result of Corollary~\ref{cor:treeindep} and the pruning method of \cite{BDF}, we see that pendent edges often play an important role in these problems. In deduction, each pendent edge must contain a searcher, and that searcher effectively removes the edge, as we will see in Theorem~\ref{thm:G-leaf}. Of course, this removal may produce new pendent edges, and so on. Likewise, a similar argument may be made with cliques. We examine both techniques.

\subsection{Pendent edge-dismantlable}

By a \dword{pendent edge} in a graph $G$ we mean an edge $e = uv$ such that at least one of its endpoints has degree 1. To delete an edge $e$, we write $G - e$ to mean $G$ with edge $e$ removed, along with both vertices $u$ and $v$.
We develop a procedural way to view \zf \  on certain graphs --- 
pendent edges are successively removed until no edges remain.  The process in fact describes a way to find a matching with uniqueness property and thus to compute \zf.
We first connect the structural property of a pendent edge to the \zf \ parameter.

\begin{thm} \label{thm:G-leaf}
If $G$ is a graph that contains a pendent edge $e$, then 
$$\zf(G) = \zf(G-e)+1.$$

\end{thm}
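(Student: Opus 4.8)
The plan is to prove the identity $\zf(G) = \zf(G-e)+1$ by establishing inequalities in both directions, working in the deduction model so that I may use the characterization $\zf = \ded$ from Theorem~\ref{thm_main} and the freedom to choose single-fire deduction guaranteed by Theorem~\ref{thm_ded_all_same}. Write $e = uv$ where, without loss of generality, $u$ has degree $1$ in $G$ (so $v$ is the unique neighbor of $u$). The key structural observation is that in any successful deduction layout on $G$, the pendent vertex $u$ forces a searcher to be associated with the edge $e$: since $u$ has degree $1$, the only way $u$ can ever become protected is either by starting with a searcher on $u$, or by having a searcher fire into $u$ from $v$. I will use this to match searchers in $G$ against searchers in $G-e$ up to an additive constant of $1$.

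For the upper bound $\zf(G) \le \zf(G-e)+1$, I would take a standard successful layout $L'$ on $G-e$ using $\ded(G-e)$ searchers and extend it to $G$ by placing one additional searcher on $u$. In the deduction process on $G$, the searcher on $u$ can immediately fire into $v$ (protecting $v$) if needed, or $u$ is already protected and $v$ gets protected by $L'$'s firing sequence restricted to $G-e$; either way, adding $u$ and $v$ back with one extra searcher preserves success. I would argue that the firing sequence for $L'$ in $G-e$ lifts to a firing sequence in $G$, since $G$ restricted to $V(G)-\{u,v\}$ is exactly $G-e$ and the edge $e$ only involves the degree-$1$ vertex $u$, so no firing in $G-e$ is obstructed.

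For the lower bound $\zf(G) \ge \zf(G-e)+1$, equivalently $\ded(G-e) \le \ded(G)-1$, I would start with a standard successful layout $L$ on $G$ using $\ded(G)$ searchers and, invoking single-fire deduction, produce a successful layout on $G-e$ with at least one fewer searcher. The idea is to remove the searcher ``charged to'' the pendent edge: either a searcher starts on $u$, or a searcher fires from $v$ into $u$ at some stage. I would delete $u$ and $v$ from the layout and show that the remaining searchers, together with the firing sequence of $L$ with the firings involving $u,v$ excised, successfully protect $G-e$. The count works because at least one searcher (the one on $u$ initially, or the one that became stranded/used on the $u$-$v$ edge) is removed. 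This mirrors the deletion argument in the proof of Theorem~\ref{thm:subgraph}, where firings into the deleted vertices are simply dropped from the sequence.

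The main obstacle I anticipate is the bookkeeping in the lower bound: carefully verifying that exactly one searcher is saved, rather than zero or two, across all the cases of how $u$ and $v$ are handled in $L$ (searcher initially on $u$ and/or on $v$; searcher firing $v \to u$ versus $u \to v$; $v$ protected by some third neighbor). I would organize this by the single-fire firing sequence of $L$ and track precisely which searcher is accountable to edge $e$, using the degree-$1$ constraint on $u$ to rule out spurious firings through $u$. Establishing both inequalities then yields the equality $\zf(G) = \zf(G-e)+1$.
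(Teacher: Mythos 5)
Your overall strategy is sound and is essentially the deduction-model analogue of the paper's argument: both proofs charge exactly one searcher to the pendent edge and establish the two inequalities $\zf(G)\le\zf(G-e)+1$ and $\zf(G-e)\le\zf(G)-1$. The paper works instead in constrained fast-mixed search, organizing the case analysis by \emph{how the edge $uv$ is cleared} (by a slide, or by two searchers occupying its ends) and using an action-swapping argument to reduce the second case to the first; your version organizes it by \emph{which searcher protects $u$}. The upper bound in your write-up is fine, provided you fire $u\to v$ at the very first stage: note that $G-e$ deletes \emph{both} endpoints, so a vertex $w$ adjacent to $v$ has an extra neighbor in $G$, and $w$'s firing is unobstructed only because $v$ is already protected by the time $w$ fires. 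Your parenthetical alternative ``$v$ gets protected by $L'$'s firing sequence restricted to $G-e$'' is vacuous, since $v\notin V(G-e)$.

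The genuine gap is in the lower bound, in the case you flag but do not resolve: suppose in the standard successful layout $L$ both $u$ and $v$ carry searchers, and the searcher on $v$ fires into some third vertex $x\in V(G)\setminus\{u,v\}$. Your recipe --- delete $u$ and $v$ and excise all firings involving them --- then leaves $x$ unprotected in $G-e$, since its protection came from $v$'s excised firing; the remaining searchers do \emph{not} form a successful layout. The repair is the one actually used in the proof of Theorem~\ref{thm:subgraph}, but it is the half you did not quote: firings \emph{from} a deleted vertex into a surviving vertex must be compensated by pre-placing a searcher on the target. Doing so here removes two searchers (at $u$ and $v$) and adds one (at $x$), so the net saving is still one, and the count closes. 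You should also justify the claim that if no searcher starts on $u$, then the searcher that fires into $u$ must have \emph{started} on $v$ (a searcher arriving at $v$ has already used its single move), which is what guarantees a searcher can always be charged to $e$. With that case handled and this observation made explicit, your argument goes through.
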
 

\begin{proof}
We consider the constrained fast mixed search model.
Let $e = vu$ where $v$ has degree 1, and hence $N[v] = \{u,v\}$. Let $G' = G - e$. If $\degree_{G}(u)=1$, the claim is trivial.  Suppose $\degree_{G}(u) > 1$. Let $S$ (resp. $S'$) be an optimal search strategy for $G$ (resp. $G'$)  in the constrained fast-mixed search model. There are two possible cases for clearing $uv$ by $S$. 
\medskip

{\sc Case 1.} In $S$, $uv$ is cleared by sliding a searcher from $u$ to $v$ or from $v$ to $u$. 

In this case, each edge incident on $u$ except $uv$ is cleared by two searchers occupying its two endpoints. If we delete the action of placing a searcher on $u$ or $v$ and delete the action of sliding a searcher from $u$ to $v$ or from $v$ to $u$, then the remaining actions of $S$ can clear $G'$. Thus $\ms(G') \leq \ms(G) - 1$.
On the other hand, note that we can also clear $G$ in the following way: place a searcher on $u$ at the beginning, then clear $G'$ by $S'$, and finally slide the searcher on $u$ to $v$. Hence $\ms(G) \leq \ms(G')+1$, and therefore $\ms(G) = \ms(G')+1$.
\medskip

{\sc Case 2.} 
In $S$, $uv$ is cleared by two searchers on $u$ and $v$. 

If the searcher on $u$ is placed on this vertex, then we do not need to place another searcher on $v$ because the searcher on $u$ can slide to $v$ in the end. This contradicts the assumption that $S$ is an optimal search strategy for $G$. So in Case 2, a searcher slides from a neighbor of $u$, say $u'$, to $u$. Note that $S$ contains two placing actions - ``placing a searcher on $u'$'' and ``placing a searcher on $v$''. In $S$, if we replace the action ``sliding a searcher from $u'$ to $u$'' by the new action ``sliding a searcher from $v$ to $u$'', then the new search strategy is also an optimal search strategy for $G$. Since in the new optimal search strategy of $G$, $uv$ is cleared by sliding a searcher from $v$ to $u$, it follows from Case 1 that $\ms(G) = \ms(G')+1$.
\end{proof}

Note that corresponding to Theorem~\ref{thm:G-leaf}, only $\ded(G) \leq \ded(G')+1$ is proved in  \cite{thesis} (see Corollary 6.1.3 and Theorem 6.1.4 when $G'=K_n$).

\begin{cor} \label{cor:G-leaves}
Let $G$ be a graph containing at least one pendent edge. If $H$ is a graph obtained from $G$ by successively deleting $k$ pendent edges, then $\zf(G) = \zf(H) + k$.
\end{cor}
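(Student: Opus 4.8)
The plan is to prove Corollary~\ref{cor:G-leaves} by induction on $k$, the number of pendent edges successively deleted, using Theorem~\ref{thm:G-leaf} as the inductive step. The statement is a natural iteration of the single-edge result, so the main work is simply verifying that the hypotheses of Theorem~\ref{thm:G-leaf} remain available at each stage.

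For the base case $k=1$, the claim $\zf(G) = \zf(H)+1$ is exactly Theorem~\ref{thm:G-leaf}, where $H = G-e$ for the first pendent edge $e$ deleted. For the inductive step, suppose the result holds for any sequence of $k-1$ successive pendent-edge deletions. Given a graph $G$ with a pendent edge, let $e_1, e_2, \ldots, e_k$ be the successively deleted pendent edges, so that $H = (\cdots((G - e_1) - e_2)\cdots) - e_k$. I would set $G_1 = G - e_1$; by definition of the deletion process, $e_2$ is a pendent edge of $G_1$, so $G_1$ contains at least one pendent edge, and $H$ is obtained from $G_1$ by successively deleting the $k-1$ pendent edges $e_2, \ldots, e_k$. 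By the inductive hypothesis applied to $G_1$, we get $\zf(G_1) = \zf(H) + (k-1)$. Applying Theorem~\ref{thm:G-leaf} to $G$ and its pendent edge $e_1$ gives $\zf(G) = \zf(G_1) + 1$. Combining these two equalities yields $\zf(G) = \zf(H) + (k-1) + 1 = \zf(H) + k$, completing the induction.

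The only subtlety worth stating explicitly is that the phrase ``successively deleting pendent edges'' guarantees that each $e_{i+1}$ is a genuine pendent edge of the intermediate graph $G_i = G_{i-1} - e_i$, not merely a pendent edge of the original $G$; this is precisely what allows Theorem~\ref{thm:G-leaf} to be invoked at every stage. Since deleting a pendent edge $e_i = u_iv_i$ removes both endpoints, the intermediate graphs are well-defined, and the hypothesis that $G$ (hence each $G_i$ for $i<k$) contains at least one pendent edge is exactly what the deletion sequence requires. I do not anticipate any genuine obstacle here: the result is a routine telescoping of the $+1$ increments supplied by Theorem~\ref{thm:G-leaf}, and the induction is entirely mechanical once the intermediate graphs are set up correctly.
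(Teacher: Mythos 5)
Your proof is correct and matches the paper's intent: the paper states this corollary without proof, treating it as an immediate iteration of Theorem~\ref{thm:G-leaf}, which is exactly the telescoping induction you carry out. Your explicit check that each $e_{i+1}$ is pendent in the intermediate graph $G_i$ (so the theorem's hypothesis holds at every stage) is the only point of substance, and you handle it correctly.
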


The following theorem describes, as an application of pendent-edge removal, a method of obtaining graphs for which $\zf(G)$ meets the bound of Theorem~\ref{thm:LB-max-indep}.

\begin{thm}  \label{thm:subdivision-1}
Let $G$ be a connected graph and $G'$ be a subdivision of $G$ obtained by subdividing every edge exactly once. 
\begin{enumerate}
\item[(i)]
If $G'$ is a tree, then $\zf(G') = |E(G)| +1 = \alpha(G')$. 
\item[(ii)]
If $G'$ contains a cycle, then $\zf(G') = |E(G)| = \alpha(G')$. 
\end{enumerate}
\end{thm}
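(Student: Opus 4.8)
The plan is to prove both parts of Theorem~\ref{thm:subdivision-1} by analyzing the structure of the subdivided graph $G'$ and applying the pendent-edge machinery of Corollary~\ref{cor:G-leaves} together with the independence-number bound of Theorem~\ref{thm:LB-max-indep}. Let me fix notation: $G$ has vertex set $V(G)$ and edge set $E(G)$, and $G'$ is obtained by placing a new degree-2 vertex $w_e$ in the middle of each edge $e \in E(G)$. Thus $V(G') = V(G) \cup \{w_e : e \in E(G)\}$, the original vertices retain their degrees, and each subdivision vertex $w_e$ has degree exactly $2$. Since each edge of $G$ becomes a path of length $2$ in $G'$, we have $|E(G')| = 2|E(G)|$ and $|V(G')| = |V(G)| + |E(G)|$.

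First I would establish the independence-number computations, since these give the target values and, by Theorem~\ref{thm:LB-max-indep}, the lower bounds $\zf(G') \ge \alpha(G')$. For part (i), if $G'$ is a tree then $G$ is a tree (subdivision preserves the tree/cycle structure), so $|E(G)| = |V(G)| - 1$ and hence $|E(G)| + 1 = |V(G)|$; I would exhibit the independent set consisting of all $|E(G)|$ subdivision vertices together with one suitably chosen original vertex, or more cleanly argue $\alpha(G') = |V(G)|$ directly. For part (ii), when $G$ contains a cycle, the natural candidate independent set is the full set of subdivision vertices $\{w_e : e \in E(G)\}$, which is independent because no two subdivision vertices are adjacent in $G'$ (every edge of $G'$ joins an original vertex to a subdivision vertex), giving $\alpha(G') \ge |E(G)|$; I would then argue the reverse inequality $\alpha(G') \le |E(G)|$ using the cycle to block the addition of any original vertex.

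The core of the argument is the upper bound, and here I would use Corollary~\ref{cor:G-leaves} on repeated pendent-edge deletion. The key observation is that any leaf (degree-1 vertex) of $G$ produces, after subdivision, a pendent path of length $2$ in $G'$, and more generally I would strip off pendent edges of $G'$ one at a time, each deletion removing two vertices and decrementing $\zf$ by exactly one. The strategy is to reduce $G'$ down to a graph $H$ whose $\zf$ value is immediately computable: in the tree case (i), I would prune all pendent edges until nothing remains, accounting for each of the $|E(G)|+1 = |V(G)|$ deletions needed to exhaust a tree on $|V(G')| = |V(G)| + |E(G)|$ vertices; in the cycle case (ii), I would prune pendent edges until $H$ is a single cycle $C_{2k}$ (the subdivision of a cycle in $G$), using that $\zf(C_m) = \lceil m/2 \rceil$ from the known value cited earlier, and then verify the arithmetic $\zf(H) + (\text{number of prunings}) = |E(G)|$.

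The main obstacle I anticipate is the bookkeeping in part (ii): I must confirm that the pendent-edge pruning terminates at a cycle (or a union of cycles) of the correct parity and that the count of deleted edges, combined with $\zf$ of the residual cycle, sums exactly to $|E(G)|$ rather than $|E(G)| \pm 1$. The parity is favorable because subdividing a cycle of length $c$ in $G$ yields a cycle of even length $2c$ in $G'$, so $\zf = \lceil 2c/2 \rceil = c$ exactly, and each original edge of $G$ either lies on the surviving cycle (contributing $1$ to $c$) or gets stripped away (contributing $1$ via the pruning count), which is precisely the mechanism that produces the clean total $|E(G)|$. I would handle the possibility of multiple independent cycles by induction or by noting the argument localizes to each $2$-edge-connected block. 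Establishing the matching lower bound $\alpha(G') \ge |E(G)|$ (part ii) and $\alpha(G') = |V(G)|$ (part i) together with Theorem~\ref{thm:LB-max-indep} then pins $\zf(G')$ to the claimed value, completing both parts.
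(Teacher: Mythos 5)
There is a genuine gap in your treatment of part (ii). Your upper-bound mechanism is to strip pendent edges from $G'$ until what remains is ``a single cycle $C_{2k}$ (the subdivision of a cycle in $G$)'' and then invoke $\zf(C_m)=\lceil m/2\rceil$. But pendent-edge pruning does not, in general, terminate at a cycle: it terminates at the first leafless graph it reaches, and when the $2$-core of $G$ is anything richer than a single cycle this residual graph is not a cycle and cannot be reduced further by your tools. The cleanest counterexample is $G=K_4$ (or a theta graph): $G'$ has minimum degree $2$, so there are no pendent edges to delete at all, $H=G'$ is $2$-connected but not a cycle, and neither Corollary~\ref{cor:G-leaves} nor the cycle formula applies. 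Your proposed fallback---induction or localizing to $2$-edge-connected blocks---does not repair this, because the obstruction lives \emph{inside} a single block; the issue is not the multiplicity of cycles but that a block need not be a cycle. Relatedly, your bookkeeping claim that every edge of $G$ ``either lies on the surviving cycle or gets stripped away'' is false for such $G$. This is precisely the case the paper must work hardest on: when the leafless residual graph $H$ has maximum degree greater than $2$, the paper abandons pruning and instead constructs an explicit constrained fast-mixed-search strategy (choosing a maximum-degree vertex $u$, a base vertex $v$ at distance two, saturating the neighbors of $v$, and iterating on the components that remain) which clears $H$ using exactly one searcher per middle vertex; this yields $\zf(G')\le|E(G)|$, and only then is the independence-number bound of Theorem~\ref{thm:LB-max-indep} used to force equality. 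That construction is the essential missing idea in your outline.

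The rest of your plan is sound and matches the paper where it applies: part (i) and the portion of part (ii) where the residual graph is an even cycle go through exactly as you describe, and your lower bound via the independent set of subdivision vertices together with Theorem~\ref{thm:LB-max-indep} is the same device the paper uses (the paper actually obtains $\alpha(G')\le|E(G)|$ as a consequence of $\zf(G')=|E(G)|$ rather than proving it directly, but your direct argument via $|\partial(A)|\ge|A|$ for connected non-tree $G$ can be made to work). To complete the proof you need to supply an upper-bound argument for leafless subdivided graphs of maximum degree at least $3$.
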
 

\begin{proof}  We use the constrained fast-mixed search model. 
If  $G$ contains only an isolated vertex, then $\ms(G') = \ms(G) = 1$, and so the claim is true. 
Suppose $G$ contains at least one edge. In $G'$, all vertices of $G$ are called 
\dword{base vertices} and all other vertices are called \dword{middle vertices}.  A vertex of degree one is called a leaf.
If $G'$ contains a leaf $v$, let $v'$ be the neighbor of $v$ in $G'$ which must be a middle vertex. 
In $G'$, place a searcher on $v$ and slide this searcher from $v$ to $v'$. Then we remove 
$v$ and $v'$ from $G'$. Note that the removal of these two vertices can make at most one non-leaf base vertex of $G'$ become a leaf base vertex of the new graph. 
We can keep doing these actions until the remaining graph, denoted by $H$, has no leaf. 
Let $k$ be the number of sliding actions in the above process. Thus, by Corollary~\ref{cor:G-leaves}, $\ms(G') = \ms(H) + k$.
Since $G$ is connected, $H$ must also be connected. 

If $H$ contains only an isolated vertex $u$, then $G$ must be a tree.
We can simply place a searcher on $u$ to clear $H$.  
Thus $\ms(G') = k +1$. Notice that when we reduce $G'$ to $H$, we delete $k$ edges and their endpoints from $G'$, where each of these $k$ edges corresponds to an edge of $G$. 
Thus, if $G$ is a tree, then $|E(G)| = k$, and therefore $\ms(G') = |E(G)| +1$.

Suppose $H$ contains at least one edge. Note that the minimum degree of $H$ is $2$. We have two cases regarding the maximum degree of $H$.
\medskip

{\sc Case 1.} If the maximum degree of $H$ is 2, then $H$ is a cycle of even length.   It follows from~\cite[Theorem~4.2]{BDF} that $\ms(H) = |V(H)|/2$. Let $C$ be the cycle in $G$ such that $H$ is the subdivision of $C$. Thus $\ms(H) = |E(C)|$. For the $k$ edges deleted from $G'$ when we reduce  $G'$ to $H$, they correspond to the $k$ edges in $G - E(C)$.
Hence $\ms(G') = \ms(H) + k = |E(G)|$.
\medskip

{\sc Case 2.} If the maximum degree of $H$ is greater than $2$, let $u$ be a vertex of $H$ with the maximum degree.  
Let $v$ be a base vertex in $H$ such that the distance between $u$ and $v$ is two. Let $v'$ be the middle vertex in $H$ that is adjacent to both $u$ and $v$. 
In $H$, place a searcher on $v$ and place a searcher on each neighbor of $v$ except $v'$, and then slide the searcher on $v$ to $v'$. Then we delete 
$v$ and $v'$ from $H$. Note that the deletion of these two vertices cannot make the vertex $u$ become a leaf in the remaining graph. Slide the searchers on the neighbors of $v$ except $v'$ to their neighboring base vertices, and delete these vertices and their neighboring base vertices. Note that the deletion of these vertices cannot make any base vertex of $H$ become a leaf in the remaining graph.  Let $H'$ denote the remaining graph. Let $X$ be an arbitrary component in $H'$. If $X$ is an isolated vertex, then place a searcher on it; otherwise,
$X$ must contain a leaf $x$ that is a middle vertex. Place a searcher on $x$ and slide this searcher to its neighboring base vertex $x'$. Remove $x$ and $x'$ from $X$.
We can keep doing in this way until either all vertices of $X$ are deleted or each new component contains only an isolated vertex, which must be a middle vertex. We place a searcher on each of these isolated vertices to clear $X$. The total number of searchers placed on $H$ is equal to the number of middle vertices in $H$. 
When we reduce $G'$ to $H$, we have deleted $k$ edges and their endpoints from $G'$. 
These $k$ edges correspond to $k$ middle vertices in $G'$ that are not in $H$.  
Note that each middle vertex of $G'$ corresponds to an edge of $G$. 
Hence $\ms(G') = \ms(H) + k \leq |E(G)|$.
Notice that the set of middle vertices in $G'$ is an independent set. From Theorem~\ref{thm:LB-max-indep}, we have  $\ms(G') \geq |E(G)| $. 
Therefore $\ms(G') = |E(G)|$ if $G$ is not a tree.

As for the independence numbers, it is immediate that if $G$ is a tree then 
$\alpha(G') = |E(G)| + 1$, by taking the base vertices as our independent set.
If $G$ has a cycle, then we already have $\alpha(G') \le \zf(G') = |E(G)|$, so
we just need to note that $|E(G)| \le \alpha(G')$, which is immediate by taking the 
middle vertices as our independent set.
\end{proof}

We now define a class of graphs $G$ for which $\zf(G)$ is characterized by the operation of pendent-edge removal.  As a point of notation, a graph $N=(V,E)$ is called a \emph{null graph} if $V=\emptyset$ or  $E=\emptyset$.

\begin{defn} \label{def:dismantlable}
Suppose $G$ is a graph and $(e_1, \dots, e_k)$ is a sequence from $E(G)$; let $G_0 = G$ and for 
for $1 \le i \le k$, let $G_i = G - \{ e_1, \dots, e_i \}$.
\begin{itemize}

\item
We say that $(e_1, \dots, e_k)$ is a \dword{pendent-edge dismantling ordering} (of length $k$) if for $1 \le i \le k$, $e_i$ is a pendent edge in $G_{i-1}$, and $G_k$ is
 a null graph.
 
 \item
A graph is \dword{pendent-edge dismantlable} if it has a 
pendent-edge dismantling ordering.
\end{itemize}
\end{defn}

For example, any forest is pendent-edge dismantlable.  Also,
a $C_5$ with a single edge attached to one of its vertices is also pendent-edge dismantlable.  But note that a $C_5$ with a length 2 path attached to one of its vertices is \emph{not} pendent-edge dismantlable, since deleting the only pendent edge leaves just the $C_5$, which has no pendent edges.

We prove the following fact which generalizes the pruning algorithm for trees given in~\cite{BDF}.

\begin{thm} \label{thm:A-dismantlable}
If $G$ is an $n$-vertex graph with a length $k$ pendent-edge-dismantling ordering,  
then  $\zf(G) = n - k$.
\end{thm}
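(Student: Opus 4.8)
The plan is to reduce everything to the already-proved pendent-edge identity and then handle a trivial base case. By hypothesis the dismantling ordering $(e_1,\dots,e_k)$ exhibits $G_k$ as the result of successively deleting $k$ pendent edges from $G=G_0$, with $G_k$ a null graph. So the natural strategy is to invoke Corollary~\ref{cor:G-leaves} to account for the contribution of the $k$ pendent edges, and then to evaluate $\zf$ on the leftover null graph directly. First I would dispose of the degenerate case $k=0$: here $G=G_k$ is itself a null graph, hence edgeless, and the claim reduces to showing that an edgeless graph on $n$ vertices has $\zf$ equal to $n$. For $k\geq 1$, the edge $e_1$ is pendent in $G_0=G$, so $G$ contains at least one pendent edge and Corollary~\ref{cor:G-leaves} applies with $H=G_k$, yielding $\zf(G)=\zf(G_k)+k$.

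Next I would track the order of the graph through the deletions. Recall that deleting a pendent edge $e=uv$ removes \emph{both} endpoints $u$ and $v$ from the graph; thus each of the $k$ deletions lowers the order by exactly $2$, independent of $\degree(u)$, giving $|V(G_k)|=n-2k$. Since $G_k$ is a null graph it has no edges, and the crucial observation is that in an edgeless graph no vertex can ever be protected by another: an isolated vertex has no neighbor from which a searcher could move to it, nor any neighbor that could force it. Hence every vertex of $G_k$ must be occupied in the initial layout, so $\zf(G_k)=|V(G_k)|=n-2k$. Substituting into $\zf(G)=\zf(G_k)+k$ then gives $\zf(G)=(n-2k)+k=n-k$, as required.

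The main (and essentially only) obstacle is the base-case evaluation $\zf(N)=|V(N)|$ for a null graph $N$; once this is secured, the rest is bookkeeping driven by Corollary~\ref{cor:G-leaves} together with the fact that each pendent-edge deletion costs exactly two vertices. I would take care to argue this base case directly, via the isolated-vertex observation above, rather than appealing to the bounds $\lceil \tfrac{n}{2}\rceil \le \zf(G)\le n-1$, which were stated only for graphs containing at least one edge and therefore do not cover the edgeless $G_k$.
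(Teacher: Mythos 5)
Your proposal is correct and follows essentially the same route as the paper: apply the pendent-edge identity (Corollary~\ref{cor:G-leaves}, i.e.\ repeated use of Theorem~\ref{thm:G-leaf}) to get $\zf(G)=\zf(G_k)+k$, then observe that $G_k$ is a null graph on $n-2k$ vertices, each of which must be occupied. Your extra care with the $k=0$ case and with justifying $\zf$ of an edgeless graph directly (rather than via bounds stated only for graphs with an edge) is a reasonable tightening of the same argument.
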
 

\begin{proof}
Suppose the graphs corresponding to the pendent-edge-dismantling ordering
are $G_0$, $G_1$, $\ldots$, $G_k$, as in Definition~\ref{def:dismantlable}.
By repeated application of Theorem~\ref{thm:G-leaf},
$$\zf(G) = \zf(G_0) = \zf(G_1) + 1 = \zf(G_2) + 2 = \cdots = \zf(G_k) + k = n - 2k + k = n - k,$$
where we note that $\zf(G_k) = n - 2k$ for the following reason:
 two vertices are lost at each step from $G_i$ to $G_{i+1}$, so 
$G_k$ has $n - 2k$ vertices, and since 
$G_k$ is a null graph all $n - 2k$ of its vertices are included.
\end{proof}

Thus we can see that a pendent-edge dismantling ordering as a particular way 
to find a maximum matching with the uniqueness property. Thus, if a graph $G$ has 
a dismantling ordering of length $k$, then $k = \mm(G)$.
So for graphs with a pendent-edge dismantling ordering, we can consider any dismantling ordering (all of the same length, say $k$)  and then $\zf(G) = n - k$.

\subsection{Clique-constructable}

We have described pendent-edge dismantlable graphs as those that can be reduced to a null graph by successive deletion of pendent edges.  We may also view these graphs as being built recursively: beginning with a pendent-edge dismantlable graph $G$, we can construct a new pendent-edge dismantlable graph by adding a new vertex $u$ adjacent to some subset of the vertices of $G$, as well as a vertex $v$ adjacent only to $u$.  It is in this vein of recursive construction that we describe a second class of graphs based on cliques.

\begin{defn} \label{def:k-clique-dismantlable}
Let $G$ be a graph, and let $(Z_1, Z_2, \ldots, Z_k)$ be a sequence of pairwise vertex-disjoint cliques in $G$, each of order at least $2$.  For each $1 \leq i \leq k$, let $u_i$ be a vertex in $Z_i$.  

Let $G_1=Z_1$. For each $1 \leq i \leq k-1$, form $G_{i+1}$ by choosing a clique $Y_{i}$ in $V(G_{i}) \setminus \{u_1, \ldots, u_{i}\}$, and adding $Z_{i+1}$ along with all edges of the form $xy$ where $x \in V(Z_{i+1})$ and $y \in V(Y_{i})$.  

We say that $(Z_1, Z_2, \ldots, Z_k)$ is a {\bf clique-construction ordering (of length $k$)} of $G$ if for some choice of vertices $u_1, \ldots, u_k$ and cliques $Y_1, \ldots, Y_{k-1}$ in this process, $G=G_k$. A graph $G$ which has a clique-construction ordering is called {\bf clique-constructable}.
\end{defn}

Note that by successively deleting the cliques $Z_k, Z_{k-1}, \ldots, Z_{2}$ from $G$, we obtain the sequence of clique-constructable graphs $G_{k-1}, G_{k-2}, \ldots, G_1$, where $G_1=Z_1$ is itself a clique.

\begin{thm} \label{thm:edge-dismantlable-chordal}
If $G$ has a clique-construction ordering of length $k$ then 
$\zf(G) = |V(G)| - k$.
\end{thm}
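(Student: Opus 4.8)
The plan is to invoke Theorem~\ref{thm_main}: since each $Z_{i+1}$ is joined to a nonempty clique $Y_i$, the graph $G$ is connected, so $\zf(G) = |V(G)| - \mm(G)$ and it suffices to prove $\mm(G) = k$. I would argue by induction on $k$, writing $G' = G_{k-1}$ for the graph obtained by deleting $Z_k$, which is clique-constructable with an ordering of length $k-1$ and so, by the induction hypothesis, satisfies $\mm(G') = k-1$. The base case $k=1$ is $G = Z_1 = K_m$, where $\mm(K_m) = 1$: a single edge has the uniqueness property, while any two disjoint edges of a clique span a $K_{2,2}$, which has two perfect matchings. The two halves of the inductive step, $\mm(G) \ge k$ and $\mm(G) \le k$, I would handle separately.

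For $\mm(G) \ge k$ I would instead bound $\ded(G) \le |V(G)| - k$ and translate back through Theorem~\ref{thm_main} (via $\mm(G) = |V(G)| - \ded(G)$). Consider the standard layout $L$ occupying every vertex of $G$ except $u_1, \dots, u_k$, using exactly $|V(G)| - k$ searchers. The crucial structural fact is that every $z \in Z_k$ has $N(z) = (Z_k \cup Y_{k-1}) \setminus \{z\}$, since $Z_k$ is a clique joined completely to $Y_{k-1}$ and $Z_k$ is untouched by any later construction step. As $Y_{k-1}$ avoids $u_1, \dots, u_{k-1}$ and is disjoint from $u_k$, all of $Y_{k-1}$ is occupied in $L$; hence each occupied $z \in Z_k \setminus \{u_k\}$ has $u_k$ as its unique unprotected neighbour and can fire into it. After this firing all of $Z_k$ is protected, so deduction among the vertices of $G'$ then proceeds exactly as in $G'$ alone (the only vertices of $G'$ that gained neighbours in $Z_k$ lie in $Y_{k-1}$, and those neighbours are now protected). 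By Theorem~\ref{thm_ded_all_same} together with the induction hypothesis applied to $L$ restricted to $G'$ (which occupies all of $G'$ except $u_1, \dots, u_{k-1}$), the whole of $G'$, and hence of $G$, is protected.

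For $\mm(G) \le k$ I would show $\mm(G) \le \mm(G') + 1$. Let $M$ be a matching with the uniqueness property in $G$ witnessed by the bipartite graph $B$ on parts $(V_1, V_2)$, so that $M$ is the unique perfect matching of $B$. The key observation is that $K := Z_k \cup Y_{k-1}$ is a \emph{clique} of $G$ and that every neighbour of a vertex of $Z_k$ lies in $K$; consequently every edge of $M$ meeting $Z_k$ has both endpoints in $K$. I would then prove that \emph{at most one} edge of $M$ lies inside $K$: if $ab, a'b' \in M$ had all four endpoints in $K$ (with $a,a' \in V_1$ and $b,b' \in V_2$), then since $K$ is a clique the edges $ab'$ and $a'b$ also belong to $B$, and swapping along the $4$-cycle $a\,b\,a'\,b'$ produces a second perfect matching of $B$, contradicting uniqueness. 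Hence at most one edge of $M$ meets $Z_k$, and deleting it yields a matching $M'$ of $G'$ with $|M'| \ge |M| - 1$.

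Finally I would verify that $M'$ retains the uniqueness property in $G'$, which I expect to be the main technical obstacle. Removing from $B$ the (at most two) endpoints of the deleted edge—and, in the case that this edge ran from $Z_k$ to $Y_{k-1}$, also dropping its $Y_{k-1}$-endpoint from the witness—yields a bipartite subgraph $B'$ of $G'$ on parts $(V_1', V_2')$ that still has $M'$ as a perfect matching; any second perfect matching of $B'$ would extend, via the deleted edge, to a second perfect matching of $B$, again contradicting uniqueness. This gives $\mm(G') \ge |M'| \ge |M| - 1$, and taking $M$ maximum yields $\mm(G) \le \mm(G') + 1 = k$. Combining the two bounds gives $\mm(G) = k$ and hence $\zf(G) = |V(G)| - k$. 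The delicate part is exactly this last verification: ensuring the chosen witness $(V_1', V_2')$ lies entirely within $G'$ and that $M'$ is genuinely its \emph{unique} perfect matching.
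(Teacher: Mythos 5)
Your proof is correct, and while your upper bound ($\ded(G) \le |V(G)| - k$ via the layout occupying everything except $u_1,\dots,u_k$ and firing into $u_k$ first) is essentially the paper's argument, your lower bound takes a genuinely different route. The paper stays in the constrained fast-mixed search model: it observes that the $k$ cliques $U_i = Y_{i-1}\cup Z_i$ cover all edges of $G$, so a strategy with more than $k$ sliding actions would, by pigeonhole, slide along two edges $ab$ and $a'b'$ of a single clique $U_i$; the first of these slides is then illegal because $a$ still has two contaminated neighbors $b$ and $b'$. You instead work through the matching characterization of Theorem~\ref{thm_main}, proving $\mm(G)\le \mm(G_{k-1})+1$ by an alternating $4$-cycle swap inside the clique $K=Z_k\cup Y_{k-1}$ (two matching edges inside a clique that is bipartitioned by $(V_1,V_2)$ always admit a swap, contradicting uniqueness), and then peeling off $Z_k$ inductively. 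The paper's pigeonhole argument is shorter and global---one contradiction kills the whole strategy at once---whereas yours is local and inductive, and requires the care you correctly identify at the end: checking that the truncated witness $(V_1',V_2')$ lives in $G_{k-1}$ (which works because the unique matching edge meeting $Z_k$ accounts for every vertex of $V_1\cup V_2$ lying in $Z_k$) and that a second perfect matching of $B'$ would lift to one of $B$. In exchange, your route gives the slightly sharper structural statement $\mm(G)\le\mm(G_{k-1})+1$, a clique analogue of the pendent-edge recursion of Theorem~\ref{thm:G-leaf}, which the paper's counting argument does not isolate.
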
 

\begin{proof}
Let $(Z_1, \ldots, Z_k)$ be a clique-construction ordering for $G$.
If $k=1$, then $G$ is a complete graph containing at least one edge, and thus $\zf(G) = |V(G)| - 1$, as required. Suppose $k \geq 2$.  For each $i \in \{1, \ldots, k\}$, let $v_i$ be some vertex in $V(Z_i) \setminus \{u_i\}$.  
From Definition~\ref{def:k-clique-dismantlable}, let $U_1=G_1$, and every time when we add a clique $Z_{i+1}$, we obtain a clique $U_{i+1}$ induced by the vertices of $Y_{i}$ and $Z_{i+1}$.  
	Note that each $U_i$ is a clique and every edge of $G$ is in at least one $U_i$. 

We first use the deduction model to show that $\ded(G) \leq |V(G)|-k$,   Place a searcher on each vertex of $G$ except $\{u_1, \ldots, u_k\}$.  Note that all vertices in $N[v_{k}]$ except $u_k$ are occupied by searchers, so the searcher on $v_k$ can move to $u_k$.  Thus every vertex in $U_k$ is protected.  Similarly, we can protect every vertex of $U_{i}$, $i=k-1, \dots, 1$, by moving the searcher on $v_i$ to $u_i$. Thus 
$\ded(G) \leq |V(G)| - k$. 

To prove the equality, we consider the constrained fast mixed search model. 
For the sake of contradiction, suppose that $\ms(G) < |V(G)| - k$. 
Consider an optimal strategy ${\cal S}$ in which all placing actions are before all sliding actions. Suppose that ${\cal S}$ contains $\ell$ sliding actions, and let $S_1, \dots, S_{\ell}$ be these sliding actions in the order when they are performed. From the assumption, we know that $\ell > k$. 
By the pigeonhole principle, 
there exists some $U_i$ on which there are at least two sliding actions. Let $S_j$ and $S_{j'}$,  $j<j'$, be two sliding actions performed along the edges $ab$ and $a'b'$ of $U_i$ respectively,  where $b$ and $b'$ are initially  contaminated. So before the searcher on $a$ slides from $a$ to $b$, the vertex $a$ has two contaminated neighbors, $b$ and $b'$. This means that $S_j$ cannot be carried out by the definition of sliding. This is a contradiction, and hence $\ms(G) \geq |V(G)| - k$. Therefore $\ms(G) = |V(G)| - k$. 
\end{proof}

From Definition~\ref{def:k-clique-dismantlable}, it is not hard to see that in any clique-constructable graph, every cycle of length at least four has a chord. Thus clique-constructable graphs form a subclass of chordal graphs. We conclude this section by posing the following problem for chordal graphs more generally.
\begin{quest}
If $G$ is a chordal graph, what is $\zf(G)$?
\end{quest}

\section{Complexity} \label{Sec:Complexity}

For a graph $G$, a subset of vertices $V' \subset V(G)$ is a \dword{vertex cover} of $G$ if every edge of $G$ is incident to at least one vertex in $V'$.

\begin{thm}  \label{thm:NP1}
Given a connected graph $G$ and an integer $k$, the problem of determining whether $\zf(G) \leq k$ is NP-complete. This problem remains NP-complete for graphs with
maximum vertex degree 19.
\end{thm}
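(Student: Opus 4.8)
The plan is to establish both membership in NP and NP-hardness, and to extract the degree bound from a single reduction out of a bounded-degree source problem. For membership in NP, I would take as certificate an initial layout of at most $k$ colored vertices together with an explicit firing sequence: given $G$ and $k$, one simulates the constrained zero forcing process in polynomial time and checks that every vertex becomes colored. Equivalently, by Theorem~\ref{thm_main} one can certify $\zf(G)\le k$ by exhibiting a matching $M$ of size at least $|V(G)|-k$ together with a bipartition witnessing the uniqueness property, and then verify in polynomial time that $M$ is the unique perfect matching of the witnessing bipartite subgraph (uniqueness of a perfect matching is polynomial-time checkable, e.g.\ by searching for an $M$-alternating cycle).

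For NP-hardness I would reduce from \textbf{Vertex Cover} restricted to graphs of bounded maximum degree, which remains NP-complete. Given such an instance $(H,c)$ I would construct $G$ by attaching a fixed \emph{vertex gadget} to every vertex of $H$ and a fixed \emph{edge gadget} to every edge of $H$. Working in the constrained fast-mixed search model, the gadgets are engineered so that (i) certain pendent edges inside each gadget force where searchers must be placed, and (ii) an edge gadget can be cleared using one fewer searcher precisely when at least one of its two incident vertex gadgets has been ``selected'' by a slide---mirroring the requirement that every edge of $H$ be covered. Using $\zf(G)=|V(G)|-\mm(G)$ together with the pendent-edge bookkeeping of Theorem~\ref{thm:G-leaf} and Corollary~\ref{cor:G-leaves}, I would compute an explicit threshold $k$ (a fixed function of $|V(H)|$, $|E(H)|$, and $c$) for which $\zf(G)\le k$ holds exactly when $H$ has a vertex cover of size at most $c$.

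The reduction then splits into two directions. In the forward direction, from a vertex cover of size at most $c$ I would read off a successful layout, equivalently a matching with the uniqueness property of the prescribed size: place the static searchers dictated by the gadgets and, for each edge of $H$, route the ``saving'' slide through the gadget of one covering endpoint; the uniqueness property is automatic because these slides are forced one at a time, exactly in the manner of Lemma~\ref{lem:bipartite-graph}. The backward direction is where I expect the main obstacle to lie: I must show that \emph{every} clearing strategy using at most $k$ searchers is forced to behave honestly, i.e.\ its slides remain localized within the intended gadgets and the set of selected vertex gadgets forms a genuine vertex cover of $H$. This is precisely where the gadgets must be designed with care---to preclude strategies that clear an edge gadget by exploiting cross edges instead of a legitimate vertex selection---and where the uniqueness property (the absence of alternating cycles and the degree-$1$ vertex guaranteed by Lemma~\ref{lem:bipartite-graph}) does the structural work of pinning down where the slides can occur.

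Finally, the degree bound follows by routine bookkeeping. Since Vertex Cover is already NP-complete on graphs of bounded maximum degree, and since the vertex and edge gadgets raise the degree of each original vertex by only a bounded constant, one verifies that every vertex of $G$ has degree at most $19$. Thus this single reduction simultaneously yields NP-hardness in general and for the class of graphs of maximum degree $19$, which together with membership in NP completes the proof.
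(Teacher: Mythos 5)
Your proposal correctly identifies the overall architecture (membership in NP via simulating a certificate layout; hardness via a reduction from bounded-degree Vertex Cover with a counting threshold $k$ expressed in terms of $|V(H)|$, $|E(H)|$, and the cover size), and this matches the paper's strategy in outline. However, there is a genuine gap: the entire content of the hardness proof lies in the gadget construction and the soundness argument, and your proposal supplies neither. You write that the gadgets ``must be designed with care'' to rule out dishonest strategies, and you explicitly flag the backward direction as the main obstacle --- but that obstacle is the theorem. Without a concrete gadget one cannot check that a budget-$k$ strategy forces a vertex cover, nor can one ``verify that every vertex of $G$ has degree at most $19$'': the number $19$ is not a generic consequence of bounded-degree gadgets but an artifact of one specific construction. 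Moreover, your proposed mechanism --- pendent edges inside vertex gadgets that ``force where searchers must be placed'' --- is unlikely to work as stated: by Theorem~\ref{thm:G-leaf} and Corollary~\ref{cor:G-leaves}, pendent edges peel off additively and contribute a fixed cost independent of any choices, so they cannot by themselves encode the binary decision of whether a vertex is selected, nor create the interaction between adjacent vertices that the covering constraint requires.

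For comparison, the paper reduces from Vertex Cover on \emph{cubic} graphs and builds $H$ from the strong product $G \boxtimes K_2$ (two copies $G_1$, $G_2$ of $G$, with ``base edges'' $a_1a_2$ joining the copies of each vertex $a$), then attaches a $K_4$-block with two new ``inner'' vertices to every non-base edge. The inner vertices force each $K_4$-block to be cleared by exactly three searchers with the unique slide occurring along the inner edge; this in turn prevents both cross edges $a_1b_2$ and $a_2b_1$ from being cleared by slides, so for every edge $ab$ of $G$ at least one of the base edges $a_1a_2$, $b_1b_2$ must be cleared by two searchers rather than a slide --- and the set of such ``doubly occupied'' base edges reads off a vertex cover. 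The threshold is $k = 4|E(G)| + |V(G)| + \ell$, and the degree bound $19 = 1 + 2\cdot 3 + 2\cdot 6$ follows from cubicity. This normalization of an arbitrary optimal strategy into one where every $K_4$-block behaves canonically is exactly the ``honesty'' argument your proposal defers, and it is the step your write-up would need to construct and prove before the reduction is complete.
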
 

\begin{proof}
It is easy to verify that the problem is in NP. To show that the problem is NP-hard, we will construct a reduction from the vertex cover problem for cubic graphs, which is NP-complete \cite{GJ79}. 
  
Let $(G, \ell)$ be an instance of the vertex cover problem, where $G$ is a connected cubic graph and $\ell$ is a positive integer. 
We will construct a graph $H$ and a positive integer $k$ that form an instance of the problem of deciding  whether $\ms(G) \leq k$. Let $G \boxtimes K_2$ be the strong product of $G$ and a complete graph with 2 vertices. 
Let $G_1$ and $G_2$ be the two copies of $G$ in $G \boxtimes K_2$. The vertices of $G \boxtimes K_2$ will be called \emph{base vertices} and
the edges in $G \boxtimes K_2$ that are copies of $K_2$ are called \emph{base edges}.
For each non-base edge $uv$ in $G \boxtimes K_2$, add two vertices $u'$, $v'$ and edges $u'v'$, $uu'$, $uv'$, $vv'$ and $vu'$. Let the new graph be denoted by $H$. 
Note that the vertices $u$, $v$, $u'$, $v'$ induce a complete graph with 4 vertices, which is called a \emph{$K_4$-block}. 
For each $K_4$-block in $H$, the two non-base vertices $u'$ and $v'$ are called \emph{inner vertices} of the $K_4$-block.
We finish the construction by setting $k = 4|E(G)| + |V(G)| + \ell$. 
It is easy to see that the graph $H$ can be constructed in polynomial time and the maximum vertex degree in $H$ is 19.
We will show that $G$ has a vertex cover of size at most $\ell$ if and only if $\ms(H) \leq k$.

 \begin{figure}[!htb]
		\begin{center}
	\begin{tikzpicture}[x=1cm,y=1cm,scale=1]
	
\draw [->, line width = 4] (3.5,7) -- (3.5,6.5);

\draw [fill = black] (0,0) circle (3pt) node[below left]{$a_1$};
\draw [fill = black] (7,0) circle (3pt) node[below right]{$b_1$};
\draw [fill = black] (0,5) circle (3pt) node[above left]{$a_2$};
\draw [fill = black] (7,5) circle (3pt) node[above right]{$b_2$};

\draw [fill = black] (3.5,5.5) circle (3pt) node[above]{$q_4$};
\draw [fill = black] (3.5,4.5) circle (3pt) node[below]{$p_4$};

\draw [fill = black] (3.5,0.5) circle (3pt) node[above]{$q_1$};
\draw [fill = black] (3.5,-0.5) circle (3pt) node[below]{$p_1$};

\draw [fill = black] (2.14,2.22) circle (3pt) node[above]{$q_2$};
\draw [fill = black] (2.8,1.3) circle (3pt) node[below]{$p_2$};

\draw [fill = black] (4.86,2.22) circle (3pt) node[above]{$q_3$};
\draw [fill = black] (4.2,1.3) circle (3pt) node[below]{$p_3$};

\draw (0,0) -- (7,0) -- (7,5) -- (0,5) -- (0,0);

\draw (0,0) -- (3.5,0.5) -- (7,0);	
\draw (0,0) -- (3.5,-0.5) -- (7,0);	
\draw (3.5,0.5) -- (3.5,-0.5);	

\draw (0,5) -- (3.5,5.5) -- (7,5);	
\draw (0,5) -- (3.5,4.5) -- (7,5);	
\draw (3.5,5.5) -- (3.5,4.5);	
	
\draw (0,5) -- (7,0);
\draw (7,5) -- (0,0);
	
\draw (0,0) -- (2.14,2.22) -- (7,5);
\draw (0,0) -- (2.8,1.3) -- (7,5);

\draw (7,0) -- (4.86,2.22) -- (0,5);
\draw (7,0) -- (4.2,1.3) -- (0,5);

\draw (2.14,2.22) -- (2.8,1.3);
\draw (4.86,2.22) -- (4.2,1.3);

\draw [fill = black] (1,7.5) circle (3pt) node[below]{$a$};
\draw [fill = black] (6,7.5) circle (3pt) node[below]{$b$};
\draw (1,7.5) -- (6,7.5);

	\draw (-1,7.5) node{$G$};
	\draw (-1,2.5) node{$H$};
	
	\end{tikzpicture}

			\caption{An edge in $G$ and its corresponding part in $H$, where $a_1a_2$ and  $b_1b_2$ are base edges, $a_1, a_2, b_1, b_2$ are base vertices, and $p_i$ and $q_i$ are inner vertices.}
			\label{fig:NP1}
		\end{center}
\end{figure}
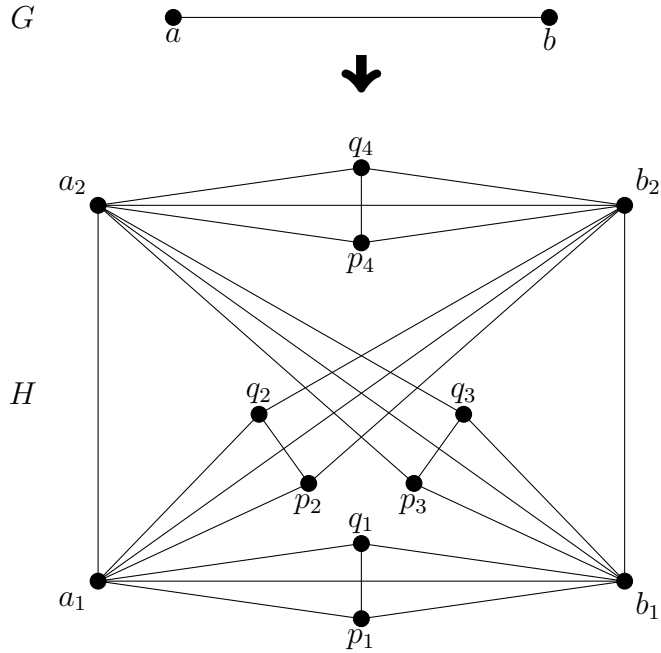

Initially, suppose that $U \subseteq V(G)$ is a vertex cover of $G$ of
size $\ell$.  Let $U_1 \subseteq V(G_1)$ and $U_2 \subseteq V(G_2)$ be the sets of vertices in $H$ that correspond to $U$.
Let $\overline{U} = V(G) \setminus U$, and
let $\overline{U}_1 \subseteq V(G_1)$ and $\overline{U}_2 \subseteq V(G_2)$ be the sets of vertices in $H$ that correspond to $\overline{U}$.
For each edge $ab$ in $G$, let $a_1, a_2$ correspond to $a$ and  $b_1, b_2$ correspond to $b$, where $a_1, b_1 \in V(G_1)$ and  $a_2, b_2 \in V(G_2)$.
Let the four $K_4$-blocks corresponding to $ab$ be induced by $\{a_1,b_1,p_1,q_1\}$, $\{a_1,b_2,p_2,q_2\}$, $\{a_2,b_1,p_3,q_3\}$ and $\{a_2,b_2,p_4,q_4\}$ respectively (see Figure~\ref{fig:NP1}).
We now describe a search strategy for $H$: Place a searcher on every vertex of $V(G_1)$ and $U_2$, and we also place a searcher on one of the inner vertices, say $p_i$,  in every $K_4$-block. 
Since $U$ is a vertex cover of $G$, for every edge $ab$ in $G$, at least one endpoint belongs to $U$.
For each pair of adjacent vertices $a, b \in U$, since the corresponding vertices $a_1, b_1 \in U_1$ and  $a_2, b_2 \in U_2$ are occupied, 
we can clear all $K_4$-blocks corresponding to $ab$  by sliding the searcher on $p_i$ to $q_i$, $1 \leq i \leq 4$. 
For each pair of adjacent vertices $a \in \overline{U}$ and $b \in U$, since the corresponding vertices $a_1 \in \overline{U}_1$ and  $b_1, b_2 \in U_2$ are occupied, 
we can clear the $K_4$-blocks by sliding searchers from $p_1$ to $q_1$, $p_2$ to $q_2$, $a_1$ to $a_2$, $p_3$ to $q_3$, and $p_4$ to $q_4$ (again refer to Figure~\ref{fig:NP1}). 
Thus $H$ can be cleared by $4|E(G)| + |V(G)| + \ell$ searchers. Hence  $\ms(H) \leq k$.

Conversely, suppose that $\ms(H) = k$. Consider a normalized optimal $\ms$-strategy $S$ for $H$. We will convert $S$ into an optimal $\ms$-strategy for $H$ such that each $K_4$-block is cleared by three searchers with a sliding action from one of the two inner vertices to the other. It is easy to see that every $K_4$-block is cleared by at least three searchers and at most four searchers. 

Suppose that there is a $K_4$-block in $H$ which is cleared by four searchers. Since every vertex can contain at most one searcher at any moment in an $\ms$-strategy, this $K_4$-block must be cleared by four searchers occupying the four vertices. So two searchers must be placed on the two inner vertices. If we place only one searcher on one inner vertex and let this searcher slide to the other  inner vertex, we can reduce the search number by one,  a contradiction. 

If there is a $K_4$-block in $H$ induced by $u$, $v$, $u'$, $v'$, where $u'$ and $v'$ are inner vertices, which is cleared by three searchers such that $u'v'$ is cleared by two searchers occupying $u'$ and $v'$, then there must be an edge in this $K_4$-block which is cleared by a sliding action. Without loss of generality, suppose that $uv$ is cleared by sliding a searcher from $u$ to $v$.
So the two searchers on $u'$ and $v'$ must be placed on them. We can change the action ``placing a searcher on $v'$'' in $S$ to ``placing a searcher on $v$'', and change the action ``sliding a searcher from $u$ to $v$'' in $S$ to ``sliding a searcher from $u'$ to $v'$''. It is easy to see that the new strategy is still an optimal $\ms$-strategy for $H$. After changing strategies for  $K_4$-blocks in this way, we can obtain an optimal $\ms$-strategy $S'$ for $H$ such that each $K_4$-block is cleared by three searchers with a sliding action from one of its inner vertices to the other. 

For each edge $ab$ in $G$, as illustrated in Figure~\ref{fig:NP1}, let $a_1a_2$ and  $b_1b_2$ be two base edges in $H$, where $a_1, b_1 \in V(G_1)$ and  $a_2, b_2 \in V(G_2)$.
In the optimal $\ms$-strategy $S'$, we place a searcher on each $p_i$, $1 \leq i \leq 4$. In total, we place $4|E(G)|$ searchers on inner vertices of $K_4$-blocks in $H$. The remaining $k - 4|E(G)|$ searchers are placed on base vertices of $H$. 
Note that the subgraph induced by $\{a_1, a_2, b_1, b_2\}$ is a $K_4$ (but it is not a $K_4$-block). So it is cleared by at least three searchers and at most four searchers. If the $K_4$ induced by $\{a_1, a_2, b_1, b_2\}$  is cleared by three searchers, since the edges $p_2q_2$ and $p_3q_3$ are cleared by sliding, the edges $a_1b_2$ and $a_2b_1$ cannot be both cleared by sliding. Thus one of $a_1a_2$ and $b_1b_2$ is cleared by a sliding and the other is cleared by two searchers on its two endpoints. Hence, for each edge $ab$ in $G$, at least one the two base edges $a_1a_2$ and  $b_1b_2$ is cleared by two searchers; let $\ell$ be the number of this kind of base edges in $H$. Thus $\ell = k - 4|E(G)| - |V(G)|$. 

We can then convert $S'$ into an optimal $\ms$-strategy for $H$ using $4|E(G)| + |V(G)| + \ell$ searchers in the following way: placing $4|E(G)|$ searchers on all inner vertices $p_i$; placing $|V(G)|$ searchers on all vertices of $G_1$;  placing $\ell$ searchers on a subset of vertices $V' \subset V(G_2)$ such that each base edge incident on one of these vertices is cleared by two searchers; the following sliding actions have been partially described above. Notice that  for every edge $a_2b_2$ in $G_2$, at least one of the endpoints is occupied by a searcher. Thus $V'$ is a vertex cover of $G_2$, and therefore, $G$ has a vertex cover of size at most $\ell$.
\end{proof}

\section{Algorithms} \label{Sec:Algorithm}

Given the result of Theorem~\ref{thm:NP1}, it is natural to ask if there are classes of graphs for which $\zf(G)$ can be computed efficiently. We begin by considering the problem on trees.  An algorithm to compute $\zf(T)$ for a tree $T$ was given in~\cite{BDF}, but its complexity was not considered.  In the next theorem, we describe an implementation of the Pruning Algorithm from~\cite{BDF}, recast in the language of pendent-edge dismantling.

\begin{thm}  \label{thm:tree}
If $T$ is a tree, then $\zf(T)$ and an optimal search strategy can be computed in linear time. 
\end{thm}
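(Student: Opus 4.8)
The plan is to compute $\zf(T)$ for a tree $T$ by directly implementing the pendent-edge dismantling process of Theorem~\ref{thm:A-dismantlable}, but organized so that the total work is linear in $|V(T)|$. Since a tree is a forest, it is pendent-edge dismantlable, and by Theorem~\ref{thm:A-dismantlable} we have $\zf(T) = n - k$, where $k$ is the length of any pendent-edge dismantling ordering. The entire task therefore reduces to (i) producing a valid dismantling ordering together with the associated searcher layout (the optimal search strategy), and (ii) doing so in linear time. The correctness of the value and of the strategy is already guaranteed by the earlier structural results, so the theorem is fundamentally an implementation/data-structure claim rather than a new combinatorial fact.

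The first step is to set up the standard linear-time leaf-peeling data structures. I would maintain the degree of every vertex in an array, together with a worklist (queue or stack) of all current leaves, initialized by one $O(n)$ scan of the adjacency lists. The main loop repeatedly removes a leaf $v$ from the worklist, identifies its unique neighbor $u$, records the pendent edge $e = uv$ (placing a searcher on $v$ and sliding it to $u$, as in the fast-mixed search realization used in Theorem~\ref{thm:G-leaf}), and deletes both $v$ and $u$ from the current graph $G_{i-1}$ to form $G_i$. Deleting $u$ requires decrementing the degrees of all neighbors of $u$; any neighbor whose degree drops to $1$ is pushed onto the worklist, and any neighbor whose degree drops to $0$ becomes an isolated vertex handled separately. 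Because each edge of $T$ is examined a constant number of times across the whole process (once when its endpoint's degree is decremented), the total work is $O(n + |E(T)|) = O(n)$, since $|E(T)| = n-1$.

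The second step is bookkeeping to output the optimal strategy and the value. Each time a pendent edge $vu$ is peeled, I append to the strategy the two actions ``place a searcher on $v$'' and ``slide the searcher from $v$ to $u$,'' matching the construction in the proofs of Theorem~\ref{thm:G-leaf} and Theorem~\ref{thm:subdivision-1}. I also increment a counter $k$. When the worklist empties, the remaining graph $G_k$ is a null graph whose vertices are isolated; for each such isolated vertex I append ``place a searcher'' and note that these are exactly the unmatched vertices counted by $n - 2k$ in Theorem~\ref{thm:A-dismantlable}. The returned value is $\zf(T) = n - k$, and the concatenated list of placing/sliding actions is a valid constrained fast-mixed search strategy of that size; by Theorem~\ref{thm_main} this is an optimal layout in every one of the equivalent models.

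The main obstacle I anticipate is not the combinatorics but handling the deletion of the neighbor $u$ efficiently and correctly, since peeling a leaf removes \emph{two} vertices at once (both $v$ and its neighbor $u$, per the convention $G - e$ that deletes both endpoints). One must be careful that $u$ may itself already sit in the worklist or may have degree greater than one, and that decrementing neighbors' degrees is done over $u$'s adjacency list exactly once; a vertex already removed must be skipped so it is never processed twice. A clean way to manage this is a boolean ``removed'' flag per vertex, checked before any vertex is processed or pushed, which keeps the amortized cost per edge constant. Provided this flag discipline is maintained, the amortized analysis is routine and the linear bound follows; the remaining verification that the produced ordering is genuinely a pendent-edge dismantling ordering is immediate from the loop invariant that the worklist always contains precisely the current leaves.
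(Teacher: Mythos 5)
Your proposal is correct and follows essentially the same route as the paper: both reduce the theorem to producing a pendent-edge dismantling ordering of $T$ and then invoke Theorem~\ref{thm:A-dismantlable} (i.e., repeated application of Theorem~\ref{thm:G-leaf}) to conclude $\zf(T)=n-k$ together with the place-and-slide strategy. The only difference is implementational --- the paper pairs each vertex with an as-yet-unpaired child during a single postorder traversal, whereas you peel leaves with a degree-counter worklist --- and both give the same ordering and the same $O(n)$ bound.
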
 

\begin{proof} We again consider this as a constrained fast mixed search problem.
If $|E(T)|=0$, the problem is trivial, 
so suppose $T$ contains at least one edge. 
Note that $T$ contains at least two vertices of degree one. Furthermore, $T$ contains at least one pendent edge $u_1v_1$ with $\degree_{T}(v_1)=1$ 
such that at most one component in $T -  \{u_1, v_1\}$ contains edges.
The greedy algorithm can be used to find all such edges $\{u_1v_1, \dots, u_kv_k\}$ repeatedly until $T -  \{u_1, v_1, \dots, u_k, v_k\}$ is a null graph, where $u_iv_i$ is a pendent edge in $ T - \{u_1, v_1, \dots, u_{i-1}, v_{i-1}\}$ such that only one component in $T - \{u_1, v_1, \dots, u_{i-1}, v_{i-1}, u_i, v_i\}$ contains edges and all other components contain an isolated vertex each.

We can implement this greedy algorithm using a postorder traversal of $T$ to find the edges $\{u_1v_1, \dots, u_kv_k\}$ in the following way: first, arbitrarily pick a vertex as the root of $T$; every time  we visit a vertex $u$, if there is a child $v$ of $u$ that has not been paired with another vertex, then we pair $u$ with $v$. All the paired vertices will form the set $\{u_1v_1, \dots, u_kv_k\}$.
Then $(u_1 v_1, \ldots, u_k v_k)$ is a pendent-edge dismantling ordering of $T$.  
It follows from Theorem~\ref{thm:A-dismantlable} that $\ms(T) = n-k$.

We can modify this postorder traversal algorithm to construct an optimal search strategy of $T$ in constrained fast-mixed search as follows: every time when we pair a child $v_i$ with its parent $u_i$, we place a searcher on $v_i$. At the end of the algorithm, we place a searcher on each unpaired vertex. For $i$ from 1 to $k$, we slide the searcher on $v_i$ to $u_i$. After the $k$ sliding actions, $T$ will be cleared.
Since we use $n-k$ searchers, this is an optimal search strategy for $T$.

Note that the set $\{u_1v_1, \dots, u_kv_k\}$ can be found in linear time by the postorder traversal algorithm. Thus $\ms(T)$ and the above optimal search strategy can be computed in linear time.
\end{proof}

Indeed, the procedure described in Theorem~\ref{thm:tree} can be extended to compute $\zf(G)$ for any pendent-edge dismantlable graph $G$.  

\begin{cor} 
If $G$ is pendent-edge dismantlable, then $\zf(G)$ and an optimal search strategy can be computed in polynomial time.  
\end{cor}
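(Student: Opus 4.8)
The plan is to reduce the corollary to the algorithmic content of Theorem~\ref{thm:tree} by observing that its proof never actually used the tree structure except to guarantee that a pendent edge exists at each stage and that the dismantling terminates in a null graph. For a pendent-edge dismantlable graph $G$, Definition~\ref{def:dismantlable} supplies exactly this: a pendent-edge dismantling ordering $(e_1, \ldots, e_k)$ whose successive deletions reduce $G$ to a null graph. By Theorem~\ref{thm:A-dismantlable}, once we have such an ordering of length $k$ we immediately have $\zf(G) = |V(G)| - k$, and the discussion following that theorem confirms that every dismantling ordering has the same length $k = \matchmax(G)$. So the content of the corollary is really twofold: (i) that a pendent-edge dismantling ordering can be \emph{found} in polynomial time, and (ii) that from such an ordering an optimal constrained fast-mixed search strategy can be extracted in polynomial time.

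For step (i), I would describe a greedy procedure that repeatedly locates a pendent edge and deletes it (removing both endpoints), maintaining degree counts as it goes. At each iteration, scan for a vertex of degree $1$; if one exists, its incident edge $e_i$ is pendent, so record $e_i$, delete both its endpoints, and decrement the degrees of their former neighbors. Iterate until no edges remain. If at some iteration there is no degree-$1$ vertex but edges still remain, then $G$ is \emph{not} pendent-edge dismantlable, and the procedure reports this; otherwise it outputs the ordering $(e_1, \ldots, e_k)$. Correctness follows because a pendent-edge dismantling ordering exists if and only if the graph can be reduced to a null graph by this process — any valid ordering must at each stage use a currently-pendent edge, and greedily taking \emph{any} pendent edge cannot get stuck prematurely if a dismantling ordering exists (this is the essential observation, analogous to the greedy correctness in Theorem~\ref{thm:tree}). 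Each iteration costs time proportional to the degrees of the deleted vertices, so the total is $O(|V(G)| + |E(G)|)$ with an appropriate data structure such as a bucket of degree-$1$ vertices, giving linear, hence polynomial, time.

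For step (ii), the strategy extraction mirrors exactly the construction in the proof of Theorem~\ref{thm:tree}. Writing $e_i = u_i v_i$ with $v_i$ the degree-$1$ endpoint in $G_{i-1}$, I would place a searcher on each $v_i$ and on each vertex surviving in the null graph $G_k$, then slide the searcher from $v_i$ to $u_i$ for $i$ from $1$ to $k$. The correctness of each slide follows from Theorem~\ref{thm:G-leaf} applied repeatedly, since deleting $e_i$ is precisely the pendent-edge deletion of that theorem; the total number of searchers is $|V(G)| - k = \zf(G)$, so the strategy is optimal. This bookkeeping adds only linear overhead on top of step (i).

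The main obstacle is really the correctness argument for the greedy dismantling in step (i) — specifically justifying that greedily choosing an arbitrary pendent edge at each stage never blocks a subsequent dismantling when one is possible. The cleanest way to handle this is an exchange argument: given any pendent-edge dismantling ordering and a different greedy choice at the first point of disagreement, show that the two deleted pendent edges are vertex-disjoint or interact only in a way that can be reordered, so the greedy choice can be absorbed into a valid ordering of the same length. Since Theorem~\ref{thm:A-dismantlable} already tells us all dismantling orderings have length $\matchmax(G)$, I only need existence-preservation, not length control, which keeps this exchange argument short.
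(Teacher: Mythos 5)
Your proposal is correct and follows essentially the same route as the paper, which states the corollary as a direct extension of the greedy pendent-edge-removal procedure from Theorem~\ref{thm:tree} combined with Theorem~\ref{thm:A-dismantlable}. The only detail the paper leaves implicit is the greedy-correctness claim you isolate (that deleting an arbitrary pendent edge never destroys dismantlability), and your exchange argument does close it: if the greedily chosen pendent edge $f=xy$ with $\deg(y)=1$ is not in the given ordering, then $y$ is never deleted by that ordering, so $x$ must be the endpoint of some $e_i=xz$ with $z$ of degree $1$ at that stage, and deleting $f$ first merely swaps which of $y,z$ ends up isolated, yielding a valid ordering of the same length.
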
 

By successively deleting pendent edges, we can also compute $\zf(G)$ on the class of unicyclic graphs in linear time.

\begin{cor} 
If $G$ is a unicyclic graph, then $\zf(G)$ and an optimal search strategy can be computed in linear time. 
\end{cor}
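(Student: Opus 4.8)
The plan is to reduce the unicyclic case to the already-established machinery for pendent-edge dismantling together with the cycle and pendent-edge theorems already proved. A \emph{unicyclic} graph is a connected graph containing exactly one cycle, equivalently a connected graph with $|E(G)| = |V(G)|$. The key structural observation is that if we repeatedly strip pendent edges from a unicyclic graph $G$, we never destroy the unique cycle, so the process terminates with exactly the cycle $C$ remaining (possibly after the graph has been reduced to that cycle alone). This is because deleting a pendent edge $e = uv$ (with $v$ a leaf) removes both $u$ and $v$; since no cycle vertex is ever a leaf while the cycle is intact, this operation only ever removes ``tree-like'' appendages hanging off the cycle.

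First I would formalize the stripping process. Root the description at the cycle: each vertex of $G$ either lies on the cycle $C$ or lies in one of the trees attached to $C$ at a single cycle vertex. I would run a leaf-pruning procedure exactly as in the proof of Theorem~\ref{thm:tree}: using a postorder-style traversal of the forest hanging off the cycle, repeatedly pair each newly exposed leaf with its parent and delete both, recording the pendent edge. Crucially, I must argue that this never exposes a cycle vertex as a leaf until all attached trees are gone, so the sequence of deletions is a legitimate sequence of pendent-edge deletions in the sense of Corollary~\ref{cor:G-leaves}. Say this removes $k$ pendent edges and leaves the graph $H = C$, a cycle on the remaining vertices.

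By Corollary~\ref{cor:G-leaves}, $\zf(G) = \zf(H) + k = \zf(C) + k$, and by the known value $\zf(C) = \lceil |V(C)|/2 \rceil$ (the cycle case of~\cite{BDF}, already invoked inside the proof of Theorem~\ref{thm:subdivision-1}), this gives a closed form computable once the cycle length is known. To locate the cycle and drive the pruning in linear time, I would compute a spanning tree (or DFS tree) of $G$ in $O(|V(G)|+|E(G)|) = O(|V(G)|)$ time; the single non-tree edge identifies the unique cycle. Leaf-pruning and the postorder traversal are each linear, and assembling the optimal search strategy --- placing a searcher on each pruned leaf and sliding it to its parent, as in Theorem~\ref{thm:tree}, together with an optimal cycle layout --- is also linear.

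The main obstacle, and the only step requiring genuine care, is the boundary case where the trees hanging off the cycle interact with the cycle vertices: I must verify that the greedy leaf-pruning can always be carried out so that each deletion is genuinely a pendent-edge deletion in $G_{i-1}$ and so that, at the moment the last appendage is stripped, the residual graph is exactly the cycle $C$ with no further pendent edges (a cycle has minimum degree $2$, so indeed no pendent edges remain). One subtlety is a unicyclic graph whose cycle is a triangle or whose attached trees share a cycle vertex; here I would note that pruning order is immaterial by the same reasoning that justifies Corollary~\ref{cor:G-leaves}, since any pendent-edge deletion sequence that reaches the cycle yields the same count $k$. Thus the correctness reduces entirely to the already-proved $\zf(G) = \zf(G-e)+1$ for pendent edges (Theorem~\ref{thm:G-leaf}) and the known cycle value, and the linear-time claim follows from the linear cost of cycle detection plus linear-time pruning.
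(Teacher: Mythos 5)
There is a genuine gap, and it comes from a misreading of the pendent-edge deletion operation. In this paper, deleting a pendent edge $e=uv$ means removing \emph{both} endpoints $u$ and $v$ (see the definition of $G-e$ just before Theorem~\ref{thm:G-leaf}). Consequently your ``key structural observation'' --- that repeated stripping of pendent edges never destroys the unique cycle and terminates with exactly $C$ remaining --- is false. If a leaf $v$ is attached directly to a cycle vertex $u$ (or, more generally, if pruning an attached tree eventually pairs the cycle vertex with one of its tree-children), then deleting the pendent edge $uv$ removes the cycle vertex $u$ and breaks the cycle open. A minimal counterexample is a triangle $abc$ with one pendant vertex $v$ attached to $a$: deleting $av$ removes $a$ and $v$, leaving the edge $bc$, which is itself pendent; after deleting it the residual graph is null, $k=2$, and $\zf(G)=2$, whereas your formula $\zf(C)+k$ has no residual cycle to apply to. A second, smaller inaccuracy: even when the cycle does survive, the residual graph $H$ need not equal $C$ --- deleting $u$ can orphan other leaves that were attached to $u$, leaving isolated vertices, each of which costs one additional searcher, so $\zf(H)=\zf(C)+(\text{number of isolated vertices})$ rather than $\zf(C)$.

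The paper's proof avoids both problems by case-splitting on the residual graph $H$ obtained after exhaustive pendent-edge deletion: if $H$ is a null graph (the cycle was destroyed), then $\ms(G)=|V(H)|+k$, the term $|V(H)|$ accounting for surviving isolated vertices; if $H$ still contains an edge, then $C$ is a component of $H$ and all other components are isolated vertices, giving $\ms(G)=|V(G)|-\left\lfloor |V(C)|/2\right\rfloor-k$ via Corollary~\ref{cor:G-leaves} and the value $\ms(C)=\left\lceil |V(C)|/2\right\rceil$. Your algorithmic outline (DFS to find the cycle, postorder leaf-pruning as in Theorem~\ref{thm:tree}, linear time overall) is fine and matches the paper, but the correctness argument needs this case analysis; as written, your reduction computes the wrong value on any unicyclic graph whose attached trees eat into the cycle or leave orphaned vertices.
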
 

\begin{proof}
A unicyclic graph is a connected graph containing exactly one cycle. Let $C$ be the cycle in $G$. Again, we consider the constrained fast mixed search model.
Let $H$ be a graph obtained from $G$ by deleting pendent edges and their endpoints recursively. Suppose $k$ pendent edges and their endpoints are deleted. 
If $H$ is a null graph, then $\ms(G) = |V(H)| + k$. 
If $H$ contains at least one edge, then either $H=C$ or $C$ is a component in $H$ and all other components are isolated vertices. Theorem~4.2 of~\cite{BDF} asserts that $\ms(C) = \left\lceil\frac{|V(C)|}{2}\right\rceil$; hence $\ms(H) =  \left\lceil \frac{|V(C)|}{2} \right\rceil + |V(G)| - 2k - |V(C)| =  |V(G)| - \left\lfloor \frac{|V(C)|}{2} \right\rfloor - 2k $.
Thus, from Corollary~\ref{cor:G-leaves}, we have $\ms(G) = \ms(H) + k = |V(G)| - \left\lfloor \frac{|V(C)|}{2} \right\rfloor - k$.
Similar to the postorder traversal algorithm in the proof of Theorem~\ref{thm:tree}, we can find $\ms(G)$ and an optimal search strategy in linear time.
\end{proof}

Another application of pendent edge removal is the following result.  Despite the NP-completeness result for computing $\zf(G)$ on an arbitrary graph $G$, in certain cases we can efficiently compute $\zf(H)$ for a graph $H$ containing $G$ as a subgraph.  In particular, if we form $H$ by attaching a tree to each vertex of $G$ in such a way that we can delete the vertices of $G$ by successive deletion of pendent edges, then Corollary~\ref{cor:G-leaves} tells us the value of $\zf(H)$. We thus obtain the following theorem.

\begin{thm}  \label{thm:attach-trees}
Let $G$ be a graph. Let $H$ be a graph that can be obtained from $G$ by attaching a tree to each vertex of $G$. For each vertex $v$ of $G$, let $T_v$ be the tree attached to $v$. If for each $v$ of $G$, there is a leaf of $T_v$ such that the distance between this leaf and $v$ is odd. Then $\zf(H)$ and an optimal search strategy can be computed in linear time. 
\end{thm}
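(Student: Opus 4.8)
The plan is to exhibit a pendent-edge dismantling ordering of $H$ and then read off $\zf(H)$ from Theorem~\ref{thm:A-dismantlable}, organizing the dismantling by a rooted-tree traversal so that it is produced in linear time. Since the attached trees $T_v$ are pairwise vertex-disjoint and meet $G$ only at their roots $v$, I would build the ordering tree-by-tree: for each $v\in V(G)$ I produce a block of pendent-edge deletions internal to $T_v$ whose net effect is to delete every vertex of $T_v$ together with $v$ itself. Concatenating these blocks deletes every vertex of $G$; once all of $V(G)$ is gone, every edge of $G$ has had an endpoint removed, so what remains of $H$ is a forest, which is pendent-edge dismantlable, and appending its dismantling reduces $H$ to a null graph. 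This is exactly the route anticipated by the discussion preceding the statement, realizing the phrase ``delete the vertices of $G$ by successive deletion of pendent edges.''

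The heart of the argument is the following per-tree claim: if $T_v$ has a leaf $\ell$ at odd distance from $v$, then $T_v$ has a pendent-edge dismantling (to a null graph) in which $v$ is removed. I would prove this by peeling along the $v$--$\ell$ path $v=x_0,x_1,\dots,x_{2m+1}=\ell$. First I would clear, recursively, all of the subtrees that hang off the interior path vertices; then I would peel the path itself from the $\ell$ end inward, deleting the pendent edges $\{x_{2m},x_{2m+1}\},\{x_{2m-2},x_{2m-1}\},\dots,\{x_0,x_1\}$ in turn. The parity is exactly what drives this: the path has an even number $2m+2$ of vertices, so the matching $\{x_{2i},x_{2i+1}\}$ covers both endpoints, and the last deletion $\{x_0,x_1\}=\{v,x_1\}$ removes $v$.

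With the claim in hand I would invoke Corollary~\ref{cor:G-leaves} and Theorem~\ref{thm:A-dismantlable}: if the full dismantling ordering has length $k$, then $\zf(H)=|V(H)|-k$. For the linear-time bound I would root each $T_v$ at $v$ and run a single post-order traversal, greedily pairing each visited vertex with an as-yet-unpaired child exactly as in the proof of Theorem~\ref{thm:tree}; this both produces the dismantling ordering (hence, translating pairings into placements and slides, an optimal constrained fast-mixed search strategy) and computes $k$ in $O(|V(H)|)$ time, after which $|V(G)|$ edges of $G$ together with the forest remnants are cleared as described.

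The step I expect to be the main obstacle is the per-tree peeling claim, and specifically the bookkeeping for the side-subtrees. To delete $\{x_{2j},x_{2j+1}\}$ as a pendent edge I need $x_{2j+1}$ to have degree $1$ at that moment, which forces me to have already cleared every subtree hanging off $x_{2j+1}$ \emph{without leaving a vertex still attached to $x_{2j+1}$} that would keep its degree above $1$. Since a hanging subtree cannot always be emptied with its attaching vertex consumed, the delicate point is to choose the path to $\ell$ and the order in which branches are processed so that any leftover stub falls at a path position where it is harmless rather than blocking; I would set this up as an induction on $|V(T_v)|$, removing the $v$--$\ell$ path last and applying the induction hypothesis to each hanging subtree, and it is in verifying that the odd-distance hypothesis always permits such a choice that the real work lies.
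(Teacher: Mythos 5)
Your overall route---dismantle each $T_v$ by pendent-edge deletions so as to consume $v$, finish the leftover forest, and read off $\zf(H)$ from Corollary~\ref{cor:G-leaves} and Theorem~\ref{thm:A-dismantlable}---is exactly the route the paper intends (the paper offers only the one-paragraph sketch preceding the statement, so your proposal is in effect an attempt to supply the missing details). The problem is the step you yourself flag as ``the main obstacle'': the per-tree claim that an odd-distance leaf guarantees a pendent-edge dismantling of $T_v$, valid inside $H$, that removes $v$. That claim is false, and no choice of the path to $\ell$ or of the branch-processing order can rescue it. Take $T_v$ with vertices $v,x,a,b,c$ and edges $vx$, $xa$, $xb$, $bc$. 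Its leaves are $a$ (distance $2$ from $v$) and $c$ (distance $3$), so the hypothesis is met via $c$. Inside $H$ the only $T_v$-edge at $v$ is $vx$, which can become pendent only once $x$ has degree $1$; but $x$'s neighbor $a$ can be removed only by deleting the pendent edge $xa$, which deletes $x$ along with it. So $x$ never attains degree $1$ while still adjacent to $v$, and if every vertex of $G$ has degree at least $2$ in $G$, an easy ``first $G$-vertex removed'' argument shows that no vertex of $G$ is ever removed: every maximal pendent-edge deletion sequence deletes exactly $bc$ and $xa$ from each tree and terminates with $G$ itself. This is precisely the ``leftover stub'' failure you anticipated, realized already by a five-vertex tree.

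This is not merely a gap in your write-up; it appears to falsify the statement as given. Attaching this gadget to every vertex of a connected graph $G$ of minimum degree at least $2$ produces an $H$ satisfying the hypothesis with $\zf(H)=\zf(G)+2|V(G)|$ by Corollary~\ref{cor:G-leaves}, so a linear-time algorithm for $\zf(H)$ would yield one for $\zf(G)$ on the instances of Theorem~\ref{thm:NP1}, which is impossible unless $\mathrm{P}=\mathrm{NP}$. The hypothesis must be strengthened---for instance to ``\emph{every} leaf of $T_v$ is at odd distance from $v$'' (which excludes the gadget, since $a$ sits at even distance), or more directly to ``$v$ is paired by the greedy leaf-pairing of $T_v$ rooted at $v$,'' a condition checkable in linear time by the postorder traversal of Theorem~\ref{thm:tree}---and proving that such a condition forces your peeling to succeed is where the real content of the theorem lies. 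Your instinct about where the difficulty sits was exactly right; the argument simply cannot be completed under the stated hypothesis.
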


\section{Algorithms on Cacti}  \label{Sec:Cacti}

A \dword{cactus} is a connected graph such that any two cycles can share at most one vertex. 
A \dword{cactus forest} is a graph where every connected component is a cactus.
Let $G$ be a cactus forest.
A vertex of $G$ is called a \dword{leaf vertex} if it is of degree 1. 
A cycle of $G$ is called a \dword{leaf cycle} if it contains exactly one vertex of degree $\geq 3$ and
this vertex is called an \dword{extender} of the leaf cycle. 
If a connected component in $G$ is a cycle, then we say that this cycle is \dword{isolated}.

In this section, we consider a slightly general version of the constrained fast-mixed searching: some vertices in the graph $G$ are pre-occupied by searchers initially; these searchers are called \dword{ pre-occupying} searchers. The pre-occupying searchers have not moved to their neighbors at the beginning of the searching process, and any subset of them may slide to neighbors during the searching process.
The goal is to find the minimum number of additional searchers (excluding the pre-occupying searchers) to clear $G$, and for convenience,  this number is still denoted by $\ms(G)$.

In our algorithm, we reduce the size of the input graph in every step. 
When we perform one step, we assume that all previous steps are not applicable. 
In each step of the algorithm, we will clear some vertices and delete some cleared vertices from the graph.
Our algorithm contains the following five steps.

\textbf{Step 1:} 
If there is a degree 1 contaminated vertex $v$ adjacent to a contaminated neighbor $u$, 
then place a searcher on $v$ and slide it to $u$. Delete both of $v$ and $u$ from the graph.

From the proof of Theorem~\ref{thm:tree}, we know that Step 1 is correct.

\textbf{Step 2:} 
If there is a degree 1 pre-occupied vertex $v$ with a pre-occupied neighbor $u$, then delete $v$ from the graph.
(Since the searcher on $v$ cannot slide to its neighbor, we can delete $v$ from the graph safely.)

\textbf{Step 3:} 
If there is a degree 1 pre-occupied vertex $v$ with a contaminated neighbor $u$, then slide the pre-occupying searcher from $v$ to $u$. Delete both of $v$ and $u$ from the graph.

\begin{lemma} \label{lem:Step3}
If $H$ is a graph that has a degree 1 pre-occupied vertex $v$ whose neighbor $u$ is contaminated, then there is an optimal search strategy for $H$ in which the pre-occupying searcher on $v$ slides from $v$ to $u$. 
\end{lemma}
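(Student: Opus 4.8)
The plan is to take an arbitrary optimal normalized search strategy $S$ for $H$ and transform it, without increasing the number of additional searchers, into one in which the pre-occupying searcher on $v$ slides along $vu$ to $u$. Write $c_v$ for the pre-occupying searcher on $v$. Since $\degree(v)=1$, the only edge at $v$ is $vu$, so the only move $c_v$ can ever make is to slide to $u$, and this is possible exactly while $vu$ is contaminated and $u$ is unoccupied. I would first classify how the edge $vu$ gets cleared in $S$. No searcher can slide from $u$ into $v$, since that would require $v$ to be unoccupied, whereas $v$ holds $c_v$, which itself can only leave $v$ by sliding to $u$. Hence $vu$ is cleared either (Case A) by $c_v$ sliding to $u$, in which case $S$ already has the desired form, or (Case B) by both endpoints being occupied while $c_v$ never moves.

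In Case B the vertex $u$ becomes occupied at some first moment, and I would split according to how: either a searcher slides into $u$ from a neighbour $u'\neq v$, or an additional searcher is placed on $u$. In the first subcase I would perform the exchange used in Case~2 of the proof of Theorem~\ref{thm:G-leaf}: replace the action ``slide a searcher from $u'$ to $u$'' by ``slide $c_v$ from $v$ to $u$''. This is legitimate because at that moment $u$ is unoccupied and $vu$ is contaminated, and $\degree(v)=1$ makes $vu$ the only contaminated edge at $v$. The searcher that would have moved off $u'$ now simply stays on $u'$, so $u'u$ is still cleared by having both endpoints occupied, and every other edge cleared by $S$ remains cleared because we have only added occupancy at $u'$. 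The number of additional searchers is unchanged, and $c_v$ now slides to $u$, so we are back in Case~A. The same device handles any later action of $S$ that slides a searcher into $u$: since $u$ is now permanently occupied by $c_v$, such a searcher is kept in place and the incident edge is cleared by having both endpoints occupied.

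The remaining subcase, an additional searcher $c_u$ placed on $u$, is the main obstacle, because once $vu$ is cleared by both endpoints being occupied the searcher $c_v$ can no longer slide (its only incident edge is no longer contaminated), so $c_v$'s slide must be arranged \emph{before} $u$ is occupied. Here I would build the new strategy by sliding $c_v$ to $u$ first (valid, as $u$ is empty and $vu$ contaminated) and omitting the placement of $c_u$ on $u$; thereafter $u$ is occupied by $c_v$, so every edge at $u$ that $S$ cleared via both endpoints being occupied is still cleared. If $c_u$ never left $u$ in $S$, then the new strategy clears $H$ with one fewer additional searcher, contradicting the optimality of $S$; hence $c_u$ must slide off $u$, say to $w$ (its unique slide, since each searcher slides at most once). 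I would then reallocate this single placement: instead of placing on $u$ and later sliding to $w$, place a searcher on $w$ at the instant $c_u$ would have slid there. At that instant $w$ is empty (as $c_u$ successfully slid into it in $S$) and is incident to the contaminated edge $uw$, so the placement is valid and $uw$ is cleared by both endpoints being occupied. This preserves the number of additional searchers and yields a strategy in which $c_v$ slides to $u$; re-normalizing, which does not change the count, completes the argument.
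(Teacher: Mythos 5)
Your proposal is correct and follows essentially the same route as the paper's proof: both split on whether $u$ becomes occupied by a slide from some $u'\neq v$ (in which case that slide is exchanged for the slide of the pre-occupying searcher from $v$ to $u$) or by a placement on $u$ (in which case optimality forces that searcher to slide off to some $w$, and the placement is reallocated to $w$ while the pre-occupying searcher slides to $u$). Your write-up is somewhat more detailed in verifying that each exchange preserves the clearing of all affected edges, but the underlying argument is the same.
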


\begin{proof}
Let $S$ be an optimal search strategy for $H$. If the pre-occupying searcher on $v$ does not slide from $v$ to $u$, there are two possible cases: 
if there is a searcher sliding to $u$ from another vertex in $S$, we can simply let the pre-occupying searcher slide from $v$ to $u$ to get another optimal search strategy. 
Otherwise, if a searcher is placed on $u$ in $S$, then this search must slide to a vertex $u'\neq v$; otherwise we can reduce the search number by one, which is a contradiction.
In this case, we can obtain another optimal search strategy by simply placing a searcher on $u'$ and sliding the pre-occupying searcher from $v$ to $u$.
From these two cases, we know that there is an optimal search strategy for $H$ in which the pre-occupying searcher on $v$ slides from $v$ to $u$.
\end{proof}

\textbf{Step 4:} If there is a degree 1 contaminated vertex $v$ adjacent to a pre-occupied neighbor $u$, then slide the searcher on $u$ from $u$ to $v$. Delete both of $v$ and $u$ from the graph.

\begin{lemma} \label{lem:Step4}
If $H$ is a graph that has a degree 1 contaminated vertex $v$ whose neighbor $u$ is pre-occupied, then there is an optimal search strategy for $H$ in which the pre-occupying searcher on $u$ slides from $u$ to $v$. 
\end{lemma}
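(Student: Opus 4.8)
The plan is to take an arbitrary optimal normalized strategy $S$ for $H$ and, assuming the pre-occupying searcher on $u$ does not already slide to $v$, edit $S$ into an equally good strategy in which it does. The whole argument rests on one structural observation about the degree-$1$ vertex $v$: since $uv$ is the only edge incident with $v$ and $v$ is initially contaminated and unoccupied, the edge $uv$ stays contaminated until it is cleared, and the pre-occupying searcher on $u$ cannot slide to any neighbour $w \neq v$ while $uv$ is contaminated (such a slide would require $uw$ to be the unique contaminated edge at $u$, contradicting that $uv$ is also contaminated). Consequently $uv$ must be cleared before the searcher on $u$ moves at all, and, as long as that searcher remains on $u$, vertex $v$ can become occupied only by placing a searcher on it, since no searcher can slide into $v$ with its sole neighbour $u$ occupied.

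First I would use this to pin down how $uv$ is cleared in $S$ under the assumption that the pre-occupying searcher on $u$ does not slide to $v$. A slide along $uv$ is ruled out: a slide $u \to v$ would be exactly the forbidden move (or would need $u$ vacated first, which is impossible while $uv$ is contaminated), and a slide $v \to u$ would have to enter the occupied vertex $u$. Hence $uv$ is cleared by having both endpoints occupied, which by the observation forces $S$ to place an additional searcher on $v$.

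Next I would split on what the searcher on $u$ does after $uv$ is cleared. If it never moves, then every edge at $u$ is cleared with the searcher on $u$ as an endpoint; I can then delete the placement on $v$ and instead let that searcher slide from $u$ to $v$ at the very end (valid, since by then $uv$ is the only contaminated edge at $u$), clearing $H$ with one fewer searcher and contradicting optimality. So the searcher on $u$ must slide to some neighbour $w \neq v$. In this case I would perform the exchange at the heart of the proof: repurpose the searcher placed on $v$ by instead placing it on $w$, so that $uw$ is cleared by its two occupied endpoints rather than by the slide; all other edges at $u$ are cleared exactly as in $S$, so $uv$ is again the last contaminated edge at $u$, and the searcher on $u$ can finish by sliding from $u$ to $v$. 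The searcher count is unchanged, so the edited strategy is optimal and has the desired property.

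I expect the main obstacle to be the bookkeeping in this exchange: one must check that $w$ is free to host this stationary searcher (which follows because, in the normalized strategy $S$, $w$ is empty at the moment the searcher on $u$ slides into it and is occupied thereafter) and that no edge incident with $v$ or $w$ is left contaminated by the swap. This is the same flavour of local surgery on a search strategy used in Case~2 of Theorem~\ref{thm:G-leaf} and in Lemma~\ref{lem:Step3}, and I would model the verification on those arguments.
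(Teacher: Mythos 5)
Your proposal is correct and follows essentially the same route as the paper's proof: establish that if the pre-occupying searcher on $u$ does not slide to $v$ then a searcher must be placed on $v$ and the searcher on $u$ must slide to some other neighbour (else one searcher could be saved, contradicting optimality), and then perform the local exchange that replaces the slide $u \to w$ by a placement on $w$, deletes the placement on $v$, and lets the searcher on $u$ slide to $v$ instead. Your write-up merely adds more explicit justification for why the placement on $v$ is forced and why the swap preserves clearing of all edges at $u$ and $w$, which is consistent with (and slightly more detailed than) the paper's argument.
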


\begin{proof}
Let $S$ be an optimal search strategy for $H$. If the pre-occupying searcher on $u$ does not slide from $u$ to $v$, 
then we must place a searcher on $v$ in $S$. 
Furthermore, it is easy to see that the searcher on $u$ must slide to another neighbor $v' \not=v$; otherwise the searchers used by $S$ can be reduced by one, which is a contradiction. 
For the action of sliding the pre-occupying searcher from $u$ to $v'$ in $S$, we can replace it with two actions: placing a searcher on $v'$ and sliding the searcher on $u$ from $u$ to $v$. We also need to delete the action of placing a searcher on $v$ in $S$.
Thus, we obtain another strategy without increasing the number of searchers. Moreover, the action of sliding the searcher from $u$ to $v$ can be delayed since the contaminated vertex $v$ is  adjacent only to vertex $u$.
Hence there is an optimal search strategy for $H$ in which the pre-occupying searcher on $u$ slides from $u$ to $v$.
\end{proof}

\begin{lemma} \label{lem:Steps1-4}
After Steps 1 -- 4, if the remaining graph $H$ is a cactus forest that contains at least one edge, then $H$ contains either an isolated cycle or a leaf cycle. 
\end{lemma}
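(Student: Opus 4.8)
The plan is to argue in two stages: first, that once Steps 1--4 are all inapplicable the remaining graph $H$ can have no vertex of degree $1$; and second, that a cactus forest containing an edge and having no degree-$1$ vertex must contain an isolated cycle or a leaf cycle.

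For the first stage, I would observe that in this model every surviving vertex of $H$ is in one of exactly two states, contaminated or pre-occupied. Suppose toward a contradiction that $H$ had a degree-$1$ vertex $v$ with unique neighbour $u$. There are precisely four possibilities for the ordered pair of states of $(v,u)$: (contaminated, contaminated), (pre-occupied, pre-occupied), (pre-occupied, contaminated), and (contaminated, pre-occupied). These are exactly the triggers of Steps 1, 2, 3, and 4, respectively, so in each case at least one step would still apply, contradicting the assumption that all of Steps 1--4 are inapplicable. Hence every component of $H$ that contains an edge has minimum degree at least $2$ (isolated vertices form their own edgeless components and may be ignored).

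For the second stage, I would fix a component $C$ of $H$ containing an edge; by the above, every vertex of $C$ has degree at least $2$, and since a finite tree always has a leaf, $C$ must contain a cycle. I would then invoke the block structure of a cactus: each block of $C$ is either a bridge (a $K_2$) or a cycle, and the block--cut tree is a tree whose leaves are blocks, since each cut vertex lies in at least two blocks and so has block--cut degree at least $2$. If $C$ is $2$-connected it is a single block, which for a cactus forces $C$ to be a single cycle, i.e.\ an \emph{isolated cycle}, and we are done. Otherwise the block--cut tree has at least one leaf block $B$ with a single cut vertex $x$. If $B$ were a bridge $xy$ with $y$ its non-cut endpoint, then $y$ would lie in no other block and hence have degree $1$ in $C$, contradicting the minimum-degree bound; so $B$ is a cycle. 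Its cut vertex $x$ contributes degree $2$ inside $B$ and has positive degree into at least one further block, giving $\degree_C(x)\ge 3$, while every other vertex of $B$ lies only in $B$ and thus has degree exactly $2$. Therefore $B$ is a \emph{leaf cycle} with extender $x$, completing the argument.

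The main obstacle I anticipate is simply being exhaustive and precise in the degree-$1$ case analysis of the first stage: I must verify that the four state-combinations for $(v,u)$ genuinely exhaust all cases and correspond one-to-one with Steps 1--4, so that the absence of applicable steps forces minimum degree at least $2$. Once that is secured, the second stage is a routine application of the standard block--cut-tree decomposition of a cactus.
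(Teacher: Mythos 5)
Your proof is correct and follows essentially the same route as the paper's: the inapplicability of Steps 1--4 forces every component of $H$ with an edge to have minimum degree at least $2$, and the cactus structure then yields an isolated cycle or a leaf cycle. The paper asserts the second, structural step without justification (claiming each component without an isolated cycle has at least two leaf cycles), whereas you supply the block--cut-tree argument explicitly; this fills in a detail the paper leaves implicit but is not a different approach.
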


\begin{proof}
Note that in the algorithm, when one step is performed, all previous steps are not applicable. Thus there is no degree 1 vertex in $H$. Hence $H$ contains  at least one cycle. Notice that $H$ is a cactus forest. If there is no isolated cycle in $H$,  then every component of $H$ must contain at least two leaf cycles.
\end{proof}

\textbf{Step 5:} 
We first remove all pre-occupied isolated vertices. For each unoccupied isolated vertex, we place a searcher on it and then delete it.
For each isolated cycle, we clear it using the method in the proof of Lemma~\ref{lem:Step5}.
Let $H$ be the current cactus forest that does not contain any isolated vertex, isolated cycle, or degree 1 vertex.
Let $C$ be a leaf cycle in $H$, where some vertices of $C$ may be pre-occupied, and let $v$ be the extender of $C$. 
Let $P$ be the path obtained from $C$ by deleting the extender $v$.
\begin{enumerate}
\item[5.1.]  
If $v$ is pre-occupied and $\ms(P) =  \ms(C)$, then clear $P$ using an optimal search strategy. Delete all vertices of $P$ from the graph.
\item[5.2.] 
If $v$ is pre-occupied and $\ms(P) =  \ms(C)+1$, then clear $C$ using an optimal search strategy. Delete all vertices of $C$ from the graph.
\item[5.3.] 
If $v$ is contaminated and $\ms(P) =  \ms(C) -1$, then clear $P$ using an optimal search strategy. Delete all vertices of $P$ from the graph.
\item[5.4.] 
If $v$ is contaminated and $\ms(P) =  \ms(C)$, then clear $C$ using an optimal search strategy. Delete all vertices of $C$ from the graph.
\end{enumerate}

\begin{lemma} \label{lem:Step5}
Let $H$ be a cactus forest that does not have any degree 1 vertices. 
Suppose that $H$ has a leaf cycle $C$ whose extender is $v$. 
Let $P$ be the path obtained from $C$ by deleting the extender $v$.
Then there is an optimal search strategy for $H$ where $C$ or $P$ is cleared using one of the strategies  
in Steps 5.1 -- 5.4. 
\end{lemma}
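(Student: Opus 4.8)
The plan is to exploit the fact that the leaf cycle $C$ is attached to the rest of $H$ only through its extender $v$. Write $P$ for the path $C-v$, and let $w_1,w_2$ be the endpoints of $P$, i.e.\ the two neighbours of $v$ on $C$. Since $C$ is a leaf cycle, every vertex of $C$ other than $v$ has degree $2$ in $H$ and all of its incident edges lie on $C$; consequently there is no edge between $V(P)$ and $V(H)\setminus V(C)$, so $v$ is a cut vertex separating $V(P)$ from the rest of the graph $H_2:=H-V(P)$. I would fix a normalized optimal strategy $S$ for $H$ (all placements performed before all slides, as is our convention). Because $v$ is the only vertex shared by $C$ and $H_2$ and the only edges joining the two sides are $vw_1,vw_2$, the sole interaction between the clearing of the cycle and the clearing of $H_2$ is through the searcher sitting on $v$ and through those two edges: every slide of $S$ other than a slide of the searcher on $v$ is confined either to $C$ or to $H_2$, and the searcher on $v$ (if present) slides at most once, either into $C$ or into $H_2$.

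The next step is to record that the four sub-cases are exhaustive, by bounding $\ms(P)$ against $\ms(C)$ in each regime. A routine comparison of strategies for $P$ and $C$, adjusting by at most one searcher to account for the two edges $vw_1,vw_2$ and for the status of $v$, yields $\ms(C)\le\ms(P)\le\ms(C)+1$ when $v$ is pre-occupied and $\ms(C)-1\le\ms(P)\le\ms(C)$ when $v$ is contaminated (these match the values $\ms(C_n)=\lceil n/2\rceil$ from~\cite{BDF} on the unweighted cycle). Hence exactly one of Steps 5.1--5.4 applies, and in each the question reduces to which way the single slide available at $v$ should be routed.

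Finally I would run an exchange argument on $S$, one regime at a time. When $v$ is pre-occupied and $\ms(P)=\ms(C)$ (Step 5.1), clearing the cycle as the path $P$ (leaving $v$ occupied and not sliding) costs $\ms(P)=\ms(C)$, so it is never worse than using the slide of $v$ inside $C$; I would therefore rewrite the cycle-part of $S$ as an optimal $P$-strategy, which frees the slide of $v$ for $H_2$ and so cannot increase the total count. When $\ms(P)=\ms(C)+1$ (Step 5.2), using the slide of $v$ inside the cycle saves one searcher there, while withholding it from $H_2$ costs at most one (a single slide saves at most one searcher); rerouting the slide into $C$ and clearing all of $C$ thus leaves the total unchanged. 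The two contaminated cases (Steps 5.3 and 5.4) are handled symmetrically, the relevant trade now being whether a cycle-slide is allowed to terminate at $v$, thereby converting $v$ from contaminated to occupied for the subsequent clearing of $H_2$. The main obstacle is precisely this bookkeeping of the single slide at $v$: in each case one must check that, after the exchange, the edges $vw_1$ and $vw_2$ are each cleared exactly once, that the searcher on $v$ is used at most once, and that the change in the status of $v$ (occupied versus contaminated) handed to $H_2$ accounts correctly for any searcher saved or spent, so that the rewritten strategy is simultaneously valid and optimal. The comparison of $\ms(P)$ with $\ms(C)$ is exactly what certifies, in each case, that this trade can be made without loss.
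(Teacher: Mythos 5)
Your proposal takes essentially the same route as the paper: the same inequalities $\ms(C)\le\ms(P)\le\ms(C)+1$ (resp.\ $\ms(P)\le\ms(C)\le\ms(P)+1$) establish that Steps 5.1--5.4 are exhaustive, and the paper then carries out exactly the exchange argument you describe, replacing the cycle-portion of an optimal strategy by an optimal $P$- or $C$-strategy and compensating for the single slide available at $v$ with at most one extra placement. The bookkeeping you defer is precisely where the paper's seven-case analysis lives --- notably the case where a searcher slides into $v$ from outside $C$, which forces a small modification of the substituted $C$-strategy (a slide out of $v$ must be replaced by a placement at its target) --- but your outline matches the paper's argument.
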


\begin{proof}
Note that $\ms(C)$ (or $\ms(P)$) is the minimum number of searchers, excluding the pre-occupying searchers, to clear $C$ (or $P$).
When $v$ is pre-occupied, since any solution to $P$ is also a solution to $C$, we have $\ms(C) \leq  \ms(P) \leq \ms(C)+1$.
When $v$ is contaminated, we have $\ms(P) \leq  \ms(C) \leq \ms(P)+1$ because for any optimal search strategy $S_1$ of $P$, we can get a search strategy $S_2$ for $C$ by adding the action of placing a searcher on $v$ at the begin of $S_1$. Thus, no case is missing in Step 5.

Note that $H$ does not contain any degree 1 vertices. 
Let $S$ be an optimal search strategy for $H$, and let $X$ be the set of actions in $S$ that are related to vertices of cycle $C$.
\medskip

{\sc Case 1.} $v$ is pre-occupied in $H$, and in $S$ the searcher on $v$ does not move. 

In this case,  it is easy to see that $\ms(P) =  \ms(C)$. 
So we replace $X$ with an optimal search strategy for $P$ and delete all vertices of $P$ from $H$ (Step 5.1).
\medskip

{\sc Case 2.} $v$ is pre-occupied in $H$, and in $S$ the searcher on $v$ slides from $v$ to a vertex of $C$. 

Note that in this case, some searchers may move to neighbors of $v$ (not in $C$) first and then the searcher on $v$ slides from $v$ to a vertex of $C$. We can assume that all actions in $X$ are the last actions in $S$.
 If $\ms(P) =  \ms(C)$, then we replace $X$ by an optimal search strategy for $P$ and delete all vertices of $P$ from $H$ (Step 5.1).
If $\ms(P) =  \ms(C)+1$, then we replace $X$ by an optimal search strategy for $C$ and delete all vertices of $C$ from $H$ (Step 5.2).
\medskip

{\sc Case 3.} $v$ is pre-occupied in $H$, and in $S$ the searcher on $v$ slides from $v$ to a vertex outside $C$. 

Notice that some searchers may move to neighbors of $v$ (in $C$) first and then the searcher on $v$ slides from $v$ to a vertex outside $C$. We can assume that all actions of $X$ are the earliest actions in $S$. For this case, we have $\ms(P) =  \ms(C)$, and so 
we replace $X$ by an optimal search strategy for $P$ and delete all vertices of $P$ from $H$ (Step 5.1).
\medskip

{\sc Case 4.} $v$ is a contaminated vertex in $H$, and in $S$ a searcher on a vertex of $C$ slides to $v$.

In this case we have $\ms(P) =  \ms(C)$. So  
 we replace $X$ by an optimal search strategy for $C$ and delete all vertices of $C$ from $H$ (Step 5.4).
\medskip

{\sc Case 5.} $v$ is a contaminated vertex in $H$, and in $S$ the searcher on a vertex $u$ outside $C$ slides from $u$ to $v$. 

Let $X'$ denote the set of actions obtained from $X$ by deleting the action that the searcher on $u$ slides from $u$ to $v$. 
If $\ms(P) =  \ms(C) -1$, since any optimal search strategy for $P$ is a part of an optimal search strategy for $C$,
we replace $X'$ by an optimal search strategy for $P$ and delete all vertices of $P$ from $H$ (Step 5.3).
If $\ms(P) =  \ms(C)$, let $S_C$ be an optimal search strategy for $C$, and 
we can construct a new search strategy $S'_C$ for $C$ as follows:
if a searcher is placed on $v$ in $S_C$ and this searcher slides from $v$ to a vertex $w$ in $C$, then in $S'_C$ we replace this sliding action by a placing action in which we place a searcher on $w$; otherwise, we let $S'_C=S_C$. Since the number of new searchers used to clear $C$ in $S$ is at least $\ms(P)$, we can replace $X$ by $S'_C$ without increasing the number of  new searchers (Step 5.4). 
\medskip

{\sc Case 6.} $v$ is a contaminated vertex in $H$, a searcher is placed on $v$ in $S$, and either the searcher on $v$ slides to a vertex $u$ in $C$ or the searcher on $v$ does not move.

If $\ms(P) =  \ms(C)$,  we replace $X$ by an optimal search strategy for $C$ and delete all vertices of $C$ from $H$ (Step 5.4).
 If $\ms(P) = \ms(C)-1$, we replace $X$ by an optimal search strategy for $P$ and delete all vertices of $P$ from $H$ (Step 5.3).
\medskip

{\sc Case 7.} $v$ is a contaminated vertex in $H$, a searcher is placed on $v$ in $S$, and the searcher on $v$ slides to a vertex $u$ outside $C$.

If $\ms(P) =  \ms(C)-1$, let $X'$ denote the set of actions obtained from $X$ by deleting two actions: ``placing a searcher on $v$'' and ``the searcher on $v$ slides from $v$ to $u$''. 
Then we replace $X'$ by an optimal search strategy for $P$ and delete all vertices of $P$ from $H$ (Step 5.3).
If $\ms(P) =  \ms(C)$, 
then we replace $X$ by an optimal search strategy for $C$ and add the action ``placing a searcher on $u$ ''. Then delete all vertices of $C$ from $H$ (Step 5.4).

From the above cases, we know that  there is an optimal search strategy for $H$ where $C$ or $P$ is cleared using one of the strategies  
in Steps 5.1 -- 5.4. 
\end{proof}

\begin{thm}
If $G$ is a cactus, then $\zf(G)$  and an optimal search strategy for $G$ can be computed  in linear time.
\end{thm}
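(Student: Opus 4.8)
The plan is to show that the five-step reduction, applied repeatedly until the graph is null, always makes progress, terminates, and returns the correct value, and that it can be scheduled to run in linear time; the value accounting will follow by stacking the per-step guarantees of Lemmas~\ref{lem:Step3}--\ref{lem:Step5}. Since a cactus is connected, $\ms(G)=\zf(G)$ by Theorem~\ref{thm_main}, so it suffices to work in the constrained fast-mixed search model. First I would record the structural invariant that cactus forests are closed under vertex deletion, so every intermediate graph produced by the algorithm is again a cactus forest. Combined with Lemma~\ref{lem:Steps1-4}, this guarantees that whenever edges remain and none of Steps 1--4 applies, the current graph contains an isolated cycle or a leaf cycle, so Step 5 is applicable; hence some step is always available until the graph is null. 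Each step deletes at least one vertex (Steps 1, 3, 4 delete two, Step 2 deletes one, and Step 5 removes an entire path, cycle, or isolated vertex), so the process halts after at most $|V(G)|$ reductions.

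Next I would argue correctness of the returned count. Each reduction either records a fixed number of newly placed searchers or converts a contaminated/pre-occupied structure into a smaller equivalent instance. Steps 1--4 are each shown to lie in some optimal search strategy by the argument of Theorem~\ref{thm:tree} together with Lemmas~\ref{lem:Step3} and~\ref{lem:Step4}; Step 5 is justified by Lemma~\ref{lem:Step5}, which shows that clearing the appropriate one of $P$ or $C$ --- selected by comparing $\ms(P)$ and $\ms(C)$ --- belongs to an optimal strategy, with any searcher that must persist at the extender handed to the reduced instance as a pre-occupying searcher. Summing the additional searchers spent across all reductions then yields $\ms(G)$, because at every step the optimal additional-searcher count of the residual instance equals the previous count minus the amount just spent, exactly as asserted by the cited lemmas.

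Finally, for the running time I would build the block--cut tree of the cactus, computable in linear time by a single DFS, whose blocks are precisely the bridges and the cycles. Pendent edges (Steps 1--4) are leaf bridges with a degree-1 endpoint, leaf cycles (Step 5) are leaf blocks that are cycles attached at a single cut vertex (the extender), and isolated cycles are single-cycle components. I would then process this tree bottom-up in postorder, as in Theorem~\ref{thm:tree}, maintaining a queue of degree-1 vertices and a list of current leaf cycles: each pendent-edge step is $O(1)$, and for a cycle $C$ one computes $\ms(C)$ and $\ms(P)$ from $\ms(C_n)=\lceil n/2\rceil$ and $\ms(P_n)=\lceil n/2\rceil$, adjusted for the pre-occupied vertices, in $O(|V(C)|)$ time, with each cycle touched once. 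The main obstacle is precisely this last accounting: clearing a leaf cycle via Step 5.1 or 5.3 keeps the extender but drops its degree by two, which can create a new degree-1 vertex or expose a new leaf cycle, and one must verify, using the block--cut tree, that these cascading updates are amortized to $O(1)$ per affected vertex rather than forcing a global rescan. I expect establishing this amortization to be the delicate part, whereas the correctness of the computed value is a comparatively direct consequence of Lemmas~\ref{lem:Step3}--\ref{lem:Step5}.
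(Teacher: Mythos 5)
Your proposal is correct and follows essentially the same route as the paper: the same five-step reduction justified by Lemmas~\ref{lem:Step3}--\ref{lem:Step5} and Lemma~\ref{lem:Steps1-4}, with linear time obtained by a single DFS building an auxiliary (block--cut-style) tree of cycle-nodes and noncycle-nodes that is then processed leaf-first. The one detail you leave open --- computing $\ms(C)$ for a cycle containing pre-occupied searchers in $O(|V(C)|)$ time --- is settled in the paper by a short case analysis (no pre-occupied vertices; two adjacent pre-occupied vertices; otherwise branch on what the searcher at a pre-occupied vertex does), each case reducing to a path handled by Steps 1--4, and your amortization worry dissolves because each cycle is touched only once and each tree modification is $O(1)$.
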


\begin{proof}
We first show that $\ms(G)$  and an optimal search strategy for $G$ can be computed by Steps 1 -- 5.
It is easy to see that every time when we run Step 1, the number of searchers is increased by one, and when we run Steps 2 -- 4, the number of searchers does not change. 
From Lemmas~\ref{lem:Step3} and \ref{lem:Step4}, there is an optimal search strategy for $G$ that contains the strategies used in Steps 1 -- 4.
We now consider Step 5. 
In Step 5, we need to compute $\ms(P)$ and $\ms(C)$, where $P$ and $C$ may contain pre-occupied vertices. From the above argument, $\ms(P)$ and an optimal search strategy for $P$ can be computed by Steps 1 -- 4. 
When we compute $\ms(C)$, there are three cases.
\begin{enumerate}
\item
$C$ does not contain any pre-occupied vertices. In this case, it follows from Theorem~4.2 of \cite{BDF}  that $\ms(C)=\lceil |C|/2 \rceil.$
\item
$C$ contains two pre-occupied vertices $u$ and $v$ that are adjacent. Let $P'$ be the path obtained from $C$ by deleting the edge $uv$ from $C$. It is easy to see that 
$\ms(P') =  \ms(C)$, and furthermore, an optimal search strategy for $P'$ can be computed by Steps 1 -- 4 in $O(|P'|)$ time, which is also  an optimal search strategy for $C$.
\item
$C$ contains pre-occupied vertices and none of them are adjacent. 
Let $u$ be a pre-occupied vertex on $C$ and let $v$ and $v'$ be the neighbors of $u$ on $C$. For an optimal search strategy for $C$, there are three possible subcases related to $u$: 
\begin{enumerate}
\item
The pre-occupying searcher on $u$ does not move. Let $P'$ be the path obtained from $C$ by deleting $u$. In this case, we have $\ms(C) = \ms(P')$, where $\ms(P')$ can be computed by Steps 1 -- 4. 
\item 
The pre-occupying searcher on $u$ slides from $u$ to $v$ (resp. $v'$). Let $P'$ be the path obtained from $C$ by deleting $u$ and $v$ (resp. $v'$). In this case, we have $\ms(C) = \ms(P')$, where $\ms(P')$ can be computed by Steps 1 -- 4. 
\item
A searcher is placed on $v$ (resp. $v'$). Let $P'$ be the path obtained from $C$ by deleting the edge $uv$ (resp. $uv'$). In this case, we have $\ms(C) = \ms(P')$, where $\ms(P')$ can be computed by Steps 1 -- 4. 
\end{enumerate}

\end{enumerate}
Note that $\ms(C)$ is the minimum among the above three subcases.

It follows from Lemma~\ref{lem:Step5} that there is an optimal search strategy for $G$ that contains the strategies used in Step 5.
From Lemma~\ref{lem:Steps1-4} and Step 5, we know that following Steps 1 -- 5, the graph will be reduced to an empty graph and the strategy is optimal.

Next, we show that the algorithm (Steps 1 -- 5) can be implemented in linear time. 
Suppose the input graph is represented by an adjacency list. Note that in $G$, every edge is contained in at most one cycle. 
We can use Depth-First Search (DFS) to find all cycles in linear time. Then we construct an auxiliary tree $T$ as follows: 
for each vertex of $G$ that is not contained in any cycle, it is represented by a node in $T$, called a noncycle-node; for each cycle $C$ in $G$, it is represented by a node in $T$, called a cycle-node, whose degree in $T$ is the number of vertices of $C$ with degree more than 2 in $G$; for each vertex of $G$ that is contained in a cycle with degree more than two in $G$, it is  represented by a node in $T$ which is linked to the corresponding cycle-nodes and noncycle-nodes; we ignore each degree 2 vertex of $G$ that is contained in a cycle.

We can pick a cycle-node as the root of $T$ and use DFS  to traverse and modify $T$: every time a leaf of $T$ is visited, if it is a noncycle-node, then run one of Steps 1 -- 4 and modify $T$ accordingly; otherwise, run Step 5 and modify $T$ accordingly. 
Note that each modification of $T$ can be done in $O(1)$. For a cycle with $k$ vertices, it takes $O(k)$ time to run Step 5 as shown in the above.
Since no edge of $G$ is contained in more than one cycle, $\ms(G)$  and an optimal search strategy for $G$ can be computed  in linear time.
\end{proof}

\section{Discussion} \label{Sec:Conclusion}

As we had noted in Section~5, chordal graphs naturally suggest themselves do their relation to clique dismantable graphs. One family of chordal graphs in particular that seems natural to consider is split graphs. Since the vertex sets of split graphs can be partitioned into an independent set and a set inducing a clique, we can see that the results of Theorem~\ref{thm:LB-max-indep} and of \cite{BDF} give lower bounds. Further, since split graphs have shown themselves to be an interesting family computationally (having polynomial-time algorithms for known NP-complete problems on arbitrary graphs), they would be a sensible starting point.

In~\cite[Theorem~6.1.1]{thesis}, it is proved that if $G$ is formed by identifying vertices from disjoint graphs $G_1$ and $G_2$, then $\zf(G) \leq \zf(G_1) + \zf(G_2)$.  This procedure may be viewed as contracting a bridge in a graph; that is, if $H$ is a graph with a bridge $e$ and $H^*$ is formed from $H$ by contracting $e$, then $\zf(H^*) \leq \zf(H)$.  This leads to a natural open question: 
\begin{quest} \label{ContractionQuestion}
If $G$ is a graph and $G^*$ is the contraction of $G$ at an edge $e=xy$, is $\zf(G^*) \leq \zf(G)$?  
\end{quest}
It is not difficult to prove that $\zf(G^*) \leq \zf(G)+1$; letting $\infty$ be the vertex formed by identification of $x$ and $y$, if $x$ or $y$ is occupied place a cop in $G^*$ on $\infty$ and the target(s) of $x$ and/or $y$ in $G$. However, we have thus far been unable to find an example of a graph for which $\zf(G^*) > \zf(G)$.

If the answer to Question~\ref{ContractionQuestion} is ``yes'', then $\zf(G)$ would be bounded below by $\zf(H)$ for any induced minor of $G$.  In particular, the existence of a clique as a minor would provide a lower bound of $\zf(G)$. 
This suggests considering the constrained zero forcing number of planar graphs. Planar graphs have a strong place in classic cops and robber literature, so these would again be a natural family to consider. 

Finally, we recognize that our constrained zero forcing might be better called ``1-constrained zero forcing,'' since every vertex can force at most one other vertex to be colored, and none of the newly colored vertices can force a vertex. However, there is a straightforward generalization here to $k$-constrained zero forcing, where each initially colored vertex can force a vertex, then the newly forced vertex could force a vertex, and that vertex could force a vertex, and so on, up to $k$ times. This is more easily considered, perhaps, in the deduction game or in constrained fast mixed search, where $k$-deduction would allow a searcher to fire or slide up to $k$ times. In particular, for some large enough $k$, this becomes exactly the zero forcing process. However, characterizing which $k$ would be sufficient would be similar to the determining the capture time of a game of cops and robber.

\end{document}